\newcommand{\N}{\mathbb{N}}
\newcommand{\Q}{\mathbb{Q}}
\newcommand{\R}{\mathbb{R}}
\newcommand{\C}{\mathbb{C}}
\newcommand{\K}{\mathbb{K}}
\newcommand{\1}{\mathbb{1}}
\newcommand{\cO}{\mathcal{O}}
\newcommand{\p}{\mathfrak{p}}
\newcommand{\q}{\mathfrak{q}}
\newcommand\ph\varphi
\newcommand\ps\psi
\newcommand\ep\varepsilon
\newcommand\rh\varrho
\newcommand\al\alpha
\newcommand\be\beta
\newcommand\ga\gamma
\newcommand\om\omega
\newcommand\ta\tau
\renewcommand\th\vartheta
\newcommand\de\delta
\newcommand\ze\zeta
\newcommand\ch\chi
\newcommand\et\eta
\newcommand\io\iota
\newcommand\la\lambda
\newcommand\si\sigma
\newcommand\Ga\Gamma
\newcommand\De\Delta
\newcommand\Th\Theta
\newcommand\La\Lambda
\newcommand\Si\Sigma
\newcommand\Ph\Phi
\newcommand\Ps\Psi
\newcommand\Om\Omega
\DeclareMathOperator\Quot{Quot}
\DeclareMathOperator\End{End}
\DeclareMathOperator\im{im}
\DeclareMathOperator\Spec{Spec}
\DeclareMathOperator\Sym{Sym}
\DeclareMathOperator\Her{Her}
\DeclareMathOperator\U{U}
\let\O\undefined
\DeclareMathOperator\O{O}
\DeclareMathOperator\Mat{Mat}
\newtheorem{thm}{Theorem}[section]
\newtheorem*{thm*}{Theorem}
\newtheorem{prop}[thm]{Proposition}
\newtheorem{lem}[thm]{Lemma}
\newtheorem{cor}[thm]{Corollary}
\theoremstyle{definition}
\newtheorem{rem}[thm]{Remark}
\newtheorem*{quest*}{Question}
\newcommand{\fac}[3]{\Ph_{#3}({#1},{#2})}
\renewenvironment{proof}[1][\unskip]{\par\noindent {\em Proof #1: }}{{\qed\bigskip}}
\title{\normalsize Positive Semidefinite Univariate Matrix Polynomials}
\date{\small\today}
\author{Christoph Hanselka}
\address{Christoph Hanselka, The University of Auckland,Department of Mathematics, Private Bag 92019, Auckland 1142, New Zealand}
\email{c.hanselka@auckland.ac.nz}
\author{Rainer Sinn}
\address{Rainer Sinn, Max-Planck-Institut f\"ur Mathematik in den Naturwissenschaften, Inselstra\ss e 22, 04103 Leipzig, Germany}
\email{rsinn@mis.mpg.de}
\subjclass[2010]{Primary: 14P05; Secondary: 47A68, 11E08, 11E25, 13J30}
\keywords{Matrix factorizations, matrix polynomial, sum of squares, Smith normal form}
\begin{document}
\begin{abstract}
	We study sum-of-squares representations of symmetric univariate real matrix polynomials that are positive semidefinite along the real line. We give a new proof of the fact that every positive semidefinite univariate matrix polynomial of size $n\times n$ can be written as a sum of squares $M=Q^TQ$, where $Q$ has size $(n+1)\times n$, which was recently proved by Blekherman-Plaumann-Sinn-Vinzant. Our new approach using the theory of quadratic forms allows us to prove the conjecture made by these authors that these minimal representations $M=Q^TQ$ are generically in one-to-one correspondence with the representations of the nonnegative univariate polynomial $\det(M)$ as sums of two squares.

	In parallel, we will use our methods to prove the more elementary hermitian analogue that every hermitian univariate matrix polynomial $M$ that is positive semidefinite along the real line, is a square, which is known as the matrix Fej\'er-Riesz Theorem.
\end{abstract}
\maketitle
\section*{Introduction}
A symmetric (or hermitian) matrix $M$ whose entries are polynomials with real (or complex) coefficients in $s$ variables $x_1,\dots,x_s$ is said to be positive semidefinite if the constant symmetric (or hermitian) matrix $M(x)$ is positive semidefinite for all $x\in\R^s$.

In this paper, we study sum-of-squares certificates for symmetric matrices whose entries are univariate real polynomials in $t$ to be positive semidefinite, that is factorizations of a univariate matrix polynomial $M(t)$ as a hermitian square, i.e.
\[
M(t) = Q(t)^*Q(t),
\]
where $Q(t)$ is a univariate $r\times n$ matrix polynomial for some integer $r$ and $Q(t)^*$ is the conjugate transpose of $Q(t)$ (complex conjugation is applied coefficient-wise to the entries of $Q$). Such a factorization makes it immediate that $M(t)$ is positive semidefinite at each $t\in\R$. This type of sum-of-squares certificates simultaneously generalizes the case of univariate polynomials, which we recover for $n=1$, and the case of constant matrices, which follows from the spectral theorem for matrices. So it is the simplest generalization of classical results in real algebraic geometry to the setup of matrix polynomials and a first step in gaining a better understanding of techniques that can be used to understand positive semidefinite matrix polynomials.

The existence of sum-of-squares certificates for hermitian univariate matrix polynomials was known since the Fifties at least and is often known as the matrix Fej\'er-Riesz Theorem, see \cite{WienerMR0097856}. This theorem has received much attention and has been generalized to various contexts in analysis, see \cite{DritschelMR2743422} for a recent survey.
Its mentioned matrix version directly implies the existence of sum-of-squares certificates for real symmetric $n\times n$ univariate matrix polynomials $M = Q^T Q$, where $Q$ has size $2n\times n$, which was proved by Choi-Lam-Reznick \cite{ChoiLamReznickMR566480}. 
Generalizations of this result to coefficient fields other than $\R$ have been developed in \citep{FRS06}.

In the symmetric case, the bound on the size of the matrix $Q(t)$ was later improved to $r=n+1$ by Blekherman-Plaumann-Sinn-Vinzant using techniques from projective algebraic geometry \cite{BPSV16} (and Leep \citep{Leep06} in an unpublished manuscript using techniques from the theory of quadratic forms). This bound $r = n+1$ is smallest possible and Blekherman-Plaumann-Sinn-Vinzant further observed that the number of essentially different sum-of-squares certificates $M = Q^TQ$ for a generic matrix polynomial $M$ and $r = n+1$ is finite and conjectured a count in terms of the degree of the determinant of $M$ as a univariate polynomial, \cite[Introduction]{BPSV16}.

In this paper, we prove this conjectured count by showing that factorizations $M = Q^TQ$ of a generic positive semidefinite real symmetric univariate $n\times n$ matrix polynomial $M$, where $Q$ is an $(n+1)\times n$ matrix polynomial, are in one-to-one correspondence with representations of the nonnegative univariate polynomial $\det(M)$ as a sum of two squares. 

\begin{thm*}[Corollary~\ref{cor:real_factorization}]
	Let $M\in\Sym_n(\R[t])$ be positive semidefinite with nonzero and square free determinant $\det(M)$. Then there is a bijection between the sets
	\[
		\{\, Q\in\Mat_{(n+1)\times n}(\R[t]) \mid Q^TQ=M\,\}
	\]
	and
	\[
		\{\, g\in\R[t]^2 \mid g^Tg=\det(M)\,\}
	\]
	modulo the left action of the orthogonal groups $\O_n(\R)$ and $\O_2(\R)$, respectively.
\end{thm*}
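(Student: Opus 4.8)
The plan is to translate both sides into statements about isometric embeddings of quadratic lattices over $\R[t]$ and then to exploit the congruence normal form of $M$. Write $f:=\det(M)$. Since $M$ is positive semidefinite and $f$ is square free, $f$ has no real zero (a real zero of a nonnegative polynomial has even multiplicity), so $f$ is strictly positive on $\R$ and $\deg f$ is even. Because $f$ is square free, the invariant factors of $M$ — which divide one another and multiply to $f$ — must be $1,\dots,1,f$, so the Smith normal form of $M$ is $\operatorname{diag}(1,\dots,1,f)$; by the structure theorem for positive semidefinite univariate matrix polynomials obtained earlier this upgrades to a congruence $M\cong\langle1,\dots,1\rangle\perp\langle f\rangle$ over $\R[t]$. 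Fix $A\in\Gl_n(\R[t])$ with $A^T\operatorname{diag}(1,\dots,1,f)\,A=M$.

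The heart of the argument is a rigidity observation. A factorization $M=Q^TQ$ with $Q$ of size $(n+1)\times n$ is the same datum as an isometric embedding $(\R[t]^n,M)\hookrightarrow(\R[t]^{n+1},I_{n+1})$, the columns of $Q$ being the images of the standard basis; precomposing with $A$ turns it into an isometric embedding of $(\R[t]^n,\operatorname{diag}(1,\dots,1,f))$, that is, an $(n-1)$-tuple $(w_1,\dots,w_{n-1})$ of vectors in $\R[t]^{n+1}$ with $\langle w_i,w_j\rangle=\delta_{ij}$ together with a vector $w_n\in\R[t]^{n+1}$ orthogonal to all of them and with $\langle w_n,w_n\rangle=f$. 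Both reparametrizations are bijective and intertwine the left action of $\O_{n+1}(\R)$. Now $\langle w_i,w_i\rangle=1$ exhibits the constant $1$ as a sum of squares of polynomials, which forces each $w_i$ to be a constant vector; the $w_i$ thus form a constant orthonormal system in $\R^{n+1}$, and after applying a suitable element of $\O_{n+1}(\R)=\O_{n+1}(\R[t])$ we may assume $w_i=e_i$ for $i=1,\dots,n-1$. Then $w_n=(0,\dots,0,g_1,g_2)$, where $g:=(g_1,g_2)^T\in\R[t]^2$ satisfies $g^Tg=\langle w_n,w_n\rangle=f$, and the only symmetry left is the stabilizer of $(e_1,\dots,e_{n-1})$ in $\O_{n+1}(\R)$, namely $\{\,\operatorname{diag}(I_{n-1},V):V\in\O_2(\R)\,\}\cong\O_2(\R)$, acting by $g\mapsto Vg$. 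Composing these bijections yields the stated correspondence.

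The step I expect to be most delicate — and the one that genuinely uses the hypotheses — is the congruence normal form $M\cong\langle1,\dots,1\rangle\perp\langle f\rangle$: one needs that the unimodular part of $M$ not merely splits off as an orthogonal summand but is congruent to the standard form $\langle1,\dots,1\rangle$ (this uses positive semidefiniteness, via the real unimodular case of matrix Fej\'er--Riesz), and that square-freeness of $f$ forces the complementary non-unimodular summand to have rank exactly $1$ — which is precisely what makes a representation of $f$ by \emph{two} squares appear rather than by more. Beyond that, I would check carefully that the two reparametrizations are genuinely $\O_{n+1}(\R)$-equivariant, that the rigidity step identifies the residual group with $\O_2(\R)$ acting in the standard way, and that for $n=1$ the correspondence reduces to the identity on $\{g\in\R[t]^2:g^Tg=m\}/\O_2(\R)$, as it should.
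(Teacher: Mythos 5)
The reparametrization and rigidity half of your argument is sound and in the same spirit as the paper: constant orthonormal columns (Remark~\ref{rem:degreesos}), reducing to the stabilizer $\{\1_{n-1}\}\times\O_2(\R)$, and the equivariance checks all go through. But the step you flag as ``most delicate'' is in fact false, and this is where the proof breaks.

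You assert a congruence $M\simeq_{\R[t]}\langle1,\dots,1\rangle\perp\langle f\rangle$, i.e.\ that $M$ is congruent \emph{over $\R[t]$} to its monic Smith normal form, justified by an orthogonal splitting of a ``unimodular part'' plus the unimodular case of matrix Fej\'er--Riesz. No such orthogonal splitting exists over $\R[t]$ in general: the Smith normal form gives $SMT=D$ with $S$ and $T$ independent invertible matrices, not $S=T^T$, and positive semidefiniteness does not repair this. The paper's Remark~\ref{rem:real_factorization_explicit} provides an explicit counterexample: with $a=t$, $b=t^2$,
\[
M=\begin{pmatrix}t^2+1 & t^3\\ t^3 & t^4+1\end{pmatrix},\qquad \det(M)=t^4+t^2+1=(t^2+t+1)(t^2-t+1),
\]
which is square free. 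If $M$ were $\R[t]$-congruent to $\langle1,\det M\rangle$ it would represent $1$, i.e.\ there would be $x,y\in\R[t]$ with $(xt+yt^2)^2+x^2+y^2=1$; comparing leading coefficients forces $xt+yt^2$, $x$, $y$ all constant, hence $x=y=0$, a contradiction. So $M$ is not $\R[t]$-congruent to its Smith normal form, and with it the bijection $Q\mapsto QA^{-1}$ you use is unavailable.

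This missing congruence is exactly the technical heart of the paper. What is true (Theorem~\ref{thm:local_snf}) is that $M$ is congruent to its monic Smith normal form \emph{over the semi-local ring} $\cO_M$ of rational functions without poles at the zeros of $\det(M)$; proving this requires controlling denominators in Witt's local-global principle (Proposition~\ref{prop:local_witt}). One then needs the matrix Cassels--Pfister theorem (Theorem~\ref{thm:cassels_pfister}) to show that factorizations over $\K[t]$ and over $\cO_M$ are in canonical bijection, which is what lets one transport the factorization problem to the diagonal form and back. Your rigidity argument for the $w_i$ being constant also fails once you are forced to work over $\cO_M$, since $\cO_M$ has non-constant units and ``$\sum a_i^2=1\Rightarrow a_i$ constant'' breaks down; this is why the paper first passes to $\cO_M$ to diagonalize, then uses Cassels--Pfister to return to $\K[t]$ where the rigidity holds, rather than trying to do both at once. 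If you had the $\R[t]$-congruence for free, the whole Cassels--Pfister machinery would be unnecessary --- a good sanity check that the claimed congruence is too strong.
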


Another interesting consequence of our approach is a characterization of all real symmetric matrix polynomials $M$ that are squares, i.e.~that can be factored as $M = Q^TQ$, where the matrix polynomial $Q$ is of equal size $n\times n$.

\begin{thm*}[Corollary~\ref{cor:polynomial_factorization}]
	Let $M\in\Sym_n(\R[t])$ be positive semidefinite with nonzero determinant $\det(M)$. Then $M$ admits a square factorization, $M=Q^TQ$ for some $Q\in\Mat_n(\R[t])$, if and only if $\det(M)$ is a square in $\R[t]$.
\end{thm*}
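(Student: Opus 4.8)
\noindent\emph{Proof plan.}
The forward direction is immediate: if $M=Q^TQ$ with $Q\in\Mat_n(\R[t])$, then $\det(M)=\det(Q)^2$ is a square in $\R[t]$. For the converse assume $\det(M)=d^2$ with $d\in\R[t]\setminus\{0\}$; the goal is to produce $Q\in\Mat_n(\R[t])$ with $Q^TQ=M$, equivalently an isometric embedding of the quadratic $\R[t]$-lattice $(\R[t]^n,M)$ into the standard lattice $(\R[t]^n,I_n)$. One expects this to follow from a non-square-free refinement of Corollary~\ref{cor:real_factorization}, since $\det(M)=d^2=d^2+0^2$ is the sum-of-two-squares decomposition of $\det(M)$ with a vanishing summand and should match precisely those factorizations that degenerate from size $n+1$ to size $n$; but I would give a direct argument by induction on $\deg d$, peeling off one irreducible factor of $d$ at a time together with the even power it contributes to $\det(M)$.

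For the inductive step pick an irreducible factor $p$ of $d$, so that $p^2\mid\det(M)$. If $p=t-r$ is linear, then $M(r)$ is singular and positive semidefinite; choosing $0\neq v\in\ker M(r)$ and applying a linear change of coordinates over $\R$ so that $v=e_n$, we obtain $(t-r)\mid M_{in}$ for all $i$ from $M(r)e_n=0$, and $(t-r)^2\mid M_{nn}$ because $M_{nn}$ is nonnegative on $\R$ with a zero at $r$. Hence, writing $L=\operatorname{diag}(1,\dots,1,\,t-r)\in\Mat_n(\R[t])$, the matrix $\tilde M:=L^{-T}ML^{-1}$ lies in $\Sym_n(\R[t])$, is again positive semidefinite, and has $\det\tilde M=\det(M)/(t-r)^2=(d/(t-r))^2$, again a square. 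For $p=q$ an irreducible quadratic factor of $d$ one runs the analogous reduction at a complex root of $q$ and descends it to $\R[t]$ — the same degree reduction used in the inductive proof of the matrix Fej\'er--Riesz theorem — obtaining $L\in\Mat_n(\R[t])$ with $\det L=q$ and $M=L^T\tilde M L$, where $\tilde M\in\Sym_n(\R[t])$ is positive semidefinite with $\det\tilde M=(d/q)^2$. In either case $\deg d$ has strictly decreased, so by induction $\tilde M=\tilde Q^T\tilde Q$ for a square $\tilde Q\in\Mat_n(\R[t])$, whence $M=(\tilde Q L)^T(\tilde Q L)$ (absorbing the preliminary coordinate change, which is also polynomial). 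The crucial point is that $L$ has \emph{polynomial} entries, which is exactly what the divisibilities forced by $p^2\mid\det(M)$ provide.

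The base of the induction is $\deg d=0$: then $\det(M)$ is a positive constant, $M$ is positive definite on $\R$, and $(\R[t]^n,M)$ is a positive definite unimodular quadratic lattice, so the claim reduces to $M\cong_{\R[t]}I_n$. Over $\R(t)$ the form $M$ has trivial second residues at all closed points (unimodularity), so by the residue exact sequence for $W(\R(t))$ it is extended from $W(\R)$; being anisotropic of rank $n$ and totally positive definite, it is $\cong I_n$ over $\R(t)$. The remaining descent from $\R(t)$ to $\R[t]$ — that a positive definite unimodular $\R[t]$-lattice which is $\cong I_n$ over the fraction field is $\cong I_n$ over $\R[t]$ — is the ``one class per genus'' phenomenon for definite lattices over $\R[t]$, which I would take from the structure theory of quadratic lattices over $\R[t]$ (or re-derive inside the paper's framework).

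The main obstacle is the complex-root reduction: one has to realize the degree reduction at a quadratic factor $q\mid d$ by a genuinely polynomial matrix $L$ with $\det L=q$, not just by a rational transformation supported at $q$, since otherwise $\tilde Q L$ leaves $\Mat_n(\R[t])$. This is the precise point at which the evenness of the multiplicity of $q$ in $\det(M)$ — guaranteed by $\det(M)=d^2$ — is used; for an odd multiplicity the analogous reduction cannot land back in size $n$, which mirrors the fact that in Corollary~\ref{cor:real_factorization} a factor of odd multiplicity in a square-free $\det(M)$ forces a genuine nonzero second square, and hence the extra row of $Q$. A secondary obstacle is the base case, which leans on the class-number input above; making that self-contained would likely be the most technical part of a full proof.
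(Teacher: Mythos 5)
Your forward direction and the linear-factor step are fine: after a real coordinate change putting a kernel vector of $M(r)$ at $e_n$, the divisibilities $(t-r)\mid M_{in}$ (from $M(r)e_n=0$) and $(t-r)^2\mid M_{nn}$ (from positive semidefiniteness at a real zero) do make $L^{-T}ML^{-1}$ polynomial, positive semidefinite, and of determinant $(d/(t-r))^2$. But the quadratic case is a genuine gap, not a detail to be ``descended from $\C$.'' For an irreducible quadratic $q\mid d$ and a complex root $z$, a kernel vector of $M(z)$ is in general complex; and even when it can be chosen real, so that after a coordinate change $q\mid M_{in}$ for all $i$, positive semidefiniteness does \emph{not} force $q^2\mid M_{nn}$, because a nonnegative real polynomial only vanishes to order $\geq 1$ at a nonreal root. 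The matrix $M=q\cdot\1_2$, $q=(t-a)^2+b^2$, already defeats the lower-triangular/diagonal reduction: $\det M=q^2$, every entry is divisible by $q$, but no diagonal $L$ with $\det L=q$ makes $L^{-T}ML^{-1}$ polynomial. An $L$ that does work, e.g.\ $L=\begin{pmatrix}t-a&-b\\b&t-a\end{pmatrix}$ with $L^TL=q\,\1_2$, is not produced by any kernel-vector computation; constructing a real polynomial $L$ with $\det L=q$ and $L^{-T}ML^{-1}$ polynomial and PSD at a conjugate pair $\{z,\bar z\}$ is exactly the step you defer to ``the degree reduction in the matrix Fej\'er--Riesz proof,'' and that reduction, via factors of the form $\1-vv^*+(t-z)vv^*$, is native to $\C[t]$; landing it back in $\Mat_n(\R[t])$ is the hard part and is not addressed.

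Your base case also imports a nontrivial external result (Harder's theorem on unimodular definite quadratic spaces over $\R[t]$), which you acknowledge but do not reprove. The paper avoids both difficulties by reversing the order: it first produces a rational square factorization over $\R(t)$ using Witt's local--global principle (Corollary~\ref{cor:rational_factorization}), and then clears denominators via a matrix Cassels--Pfister argument (Theorem~\ref{thm:cassels_pfister_surjective}). The denominator-clearing is again an induction over irreducible factors, but it splits linear factors off the already-constructed rational square root $Q$, where the hypothesis is the much stronger $(Q^*Q)(z)=0$ (a full matrix zero, not mere singularity); this is precisely what makes Proposition~\ref{prop:matrix_zeros} work, and the real case of a conjugate pair is handled there by the maximal isotropic subspace trick of Lemma~\ref{lem:conjugate_complement}. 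That device — or some replacement for it — is the missing ingredient in your quadratic step.
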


We develop our theory mostly in parallel for both the real symmetric and complex hermitian setting. 
On the one hand, this leads to the following result, which is, as we learned during preparation of this paper, also due to \citep[Theorems 2,3]{ESS16}.
\begin{thm*}[Corollary~\ref{cor:complex_factorization}]
	Let $M\in\Her_n(\C[t])$ be positive semidefinite with nonzero and square free determinant $\det(M)$. Then the determinant map induces a bijection between the sets
	\[
		\{\, Q\in\Mat_n(\C[t]) \mid Q^*Q=M\,\}
	\]
	and
	\[
		\{\, g\in\C[t] \mid g^*g=\det(M)\,\}
	\]
	modulo the left action of the unitary groups $U_n(\C)$ and $U_1(\C)$, respectively.
\end{thm*}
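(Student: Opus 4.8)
The map in question sends a factorization $Q^*Q=M$ to $\det Q\in\C[t]$, and since $\det$ is multiplicative while $(Q^*)=(Q)^*$ is coefficient conjugation, $(\det Q)^*(\det Q)=\det M$; as it intertwines the determinant homomorphisms $U_n(\C)\to U_1(\C)$, it descends to the orbit sets. The plan is to realize both orbit sets as the set of maximal isotropic submodules (``metabolizers'') of one and the same torsion hermitian form, and to observe that under this identification the descended map becomes the identity.

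Since $f:=\det M\in\R[t]$ is nonzero and square free it has no real zeros (a real zero would have even multiplicity because $f\ge 0$ on $\R$), so all zeros $z$ of $f$ are simple and non-real, and $M$ is in fact positive \emph{definite} on all of $\R$. The Smith normal form of $M$ over the principal ideal domain $\C[t]$ is therefore $\mathrm{diag}(1,\dots,1,f)$, so $E:=\C[t]^n/M\C[t]^n$ is free of rank one over $\C[t]/(f)\cong\prod_z\C$, and the involution $t\mapsto t$, $i\mapsto -i$ (which is defined on $\C[t]/(f)$ because $f\in\R[t]$) interchanges the factors indexed by $z$ and $\bar z$. Using $M^*=M$, the matrix $M^{-1}=\operatorname{adj}(M)/f$ equips $E$ with a nondegenerate torsion hermitian form $q_M$, the discriminant form of $(\C[t]^n,M)$. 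A factorization $M=Q^*Q$ with $Q\in\Mat_n(\C[t])$ is the same as an isometry of $(\C[t]^n,M)$ onto the $\C[t]$-sublattice $Q\C[t]^n$ of the unimodular lattice $(\C[t]^n,\mathrm{std})$, and by the standard correspondence between unimodular overlattices and isotropic submodules of a discriminant form this corresponds to the metabolizer $Q^{-1}(\C[t]^n)/\C[t]^n\subseteq E$, which is exactly the part of $E$ supported on the zeros of $\det Q$ (indeed $Q^{-1}(\C[t]^n)/\C[t]^n\cong\operatorname{coker}(Q)$, and a submodule supported away from its conjugate is automatically isotropic, hence a metabolizer by a length count). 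Two factorizations $Q_1,Q_2$ yield the same metabolizer exactly when $Q_2=UQ_1$ for some $U\in U_n(\C)$: a metabolizer determines the unimodular overlattice, and the two isometries of it onto $(\C[t]^n,\mathrm{std})$ differ by an automorphism of the latter, which is constant since a unitary polynomial matrix is bounded on $\R$. This step uses the matrix Fej\'er--Riesz theorem (for the existence of at least one $Q$) and the fact that a hermitian $\C[t]$-lattice which is unimodular and positive definite on $\R$ is isometric to $(\C[t]^n,\mathrm{std})$, both available from the parallel treatment. Running the same analysis for the $1\times 1$ matrix $(f)$ identifies $\{\,g\in\C[t]\mid g^*g=f\,\}/U_1(\C)$ with the set of metabolizers of the discriminant form $q_f$ of $(\C[t],f)$ on $E_f=\C[t]/(f)$, the metabolizer attached to $g$ being the part of $E_f$ supported on the zeros of $g$.

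It remains to match the two metabolizer sets so that the determinant map corresponds to the identity, and this follows from the uniqueness, up to isometry, of a nondegenerate torsion hermitian form on a free rank-one module over $\C[t]/(f)\cong\prod_z\C$ with the above involution: a generator may be rescaled by any unit to normalize the form, so not only is $q_M\cong q_f$, there is in fact a $\C[t]/(f)$-linear isometry between them, which necessarily respects the decomposition by support. Such an isometry carries the metabolizer of $E$ supported on the zeros of $\det Q$ to the metabolizer of $E_f$ supported on the zeros of $\det Q$, i.e.\ to the metabolizer attached to the scalar $\det Q$; hence the induced bijection of orbit sets is precisely the one given by $Q\mapsto\det Q$. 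As a sanity check, on each conjugate pair of factors $\C_z\times\C_{\bar z}$ the normalized form is hyperbolic and has exactly two isotropic lines, so both orbit sets have $2^{(\deg f)/2}$ elements, matching the number of square roots of $f$ modulo $U_1(\C)$.

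The main obstacle is the middle paragraph: making rigorous that factorizations of $M$ modulo the left $U_n(\C)$-action biject with metabolizers of $q_M$. This requires setting up the discriminant-form dictionary for hermitian $\C[t]$-lattices, checking that every metabolizer lifts back to a factorization with \emph{polynomial} entries (which is where one needs that a positive definite unimodular hermitian $\C[t]$-lattice is standard), and pinning down the residual ambiguity as $U_n(\C)$. Once this bookkeeping is in place — and the analogous argument in the symmetric case indicates it is routine — everything reduces to the elementary uniqueness statement for torsion hermitian forms over $\prod_z\C$, which makes the hermitian case genuinely more elementary than the symmetric one.
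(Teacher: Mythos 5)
Your argument is correct in outline but follows a genuinely different route from the paper. The paper proves this corollary in a few lines from Theorem~\ref{thm:factorization_snf} and Lemma~\ref{lem:one_cancellation}: since $\det M$ is square free, the monic Smith normal form of $M$ is $\1_{n-1}\oplus\langle d\rangle$, and by Theorem~\ref{thm:cassels_pfister} together with the (in the hermitian case essentially trivial) congruence $M\simeq_{\cO_M}\1_{n-1}\oplus\langle d\rangle$ one reduces directly to the $1\times1$ case, after which Remark~\ref{rem:degreesos} identifies the group of polynomial unitaries with $U_n(\C)$. Your route instead sets up the discriminant-form/overlattice dictionary: factorizations $M=Q^*Q$ modulo $\U_n(\C)$ correspond to unimodular integral overlattices of $(\C[t]^n,M)$, hence to metabolizers of the torsion hermitian form on $\C[t]^n/M\C[t]^n\cong\C[t]/(f)\cong\prod_z\C$, and the uniqueness of a nondegenerate rank-one torsion hermitian form over $\prod_z\C$ identifies these with the metabolizers of the $1\times 1$ case; matching supports then shows the bijection is $Q\mapsto\det Q$. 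Both routes ultimately rest on the same hard input (the matrix Cassels--Pfister/Fej\'er--Riesz statement, Theorem~\ref{thm:cassels_pfister_surjective}, which you invoke to know that a unimodular positive definite polynomial hermitian lattice is standard), but you replace the Smith normal form congruence of Theorem~\ref{thm:local_snf} and the one-cancellation Lemma~\ref{lem:one_cancellation} with lattice-theoretic bookkeeping. What you gain is a structural picture (metabolizers parametrize the factorizations) and an immediate count $2^{(\deg f)/2}$; what you lose is brevity, and several steps you flag honestly --- the integral overlattice$\leftrightarrow$metabolizer correspondence over $\C[t]$, the lift of a metabolizer to a polynomial $Q$, and the isolation of the residual $\U_n(\C)$-ambiguity --- would each need to be written out in full, whereas the paper's proof has them already packaged in the cited results. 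I did not find a step that would fail: the key facts you use (square-free determinant forcing Smith normal form $\1_{n-1}\oplus\langle f\rangle$; metabolizers of a rank-one form over $\prod_z\C$ being exactly the submodules picking one of each conjugate pair; surjectivity of the hermitian norm $u\mapsto u^*u$ on $\bigl(\prod_z\C\bigr)^\times$) are all correct, so this is a valid alternative proof modulo the bookkeeping you yourself identify.
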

On the other hand, this parallel approach highlights the essential differences between the real and the complex case. While the latter can be treated completely elementary, the former requires a considerable amount of additional work and quadratic forms theory. An indication that the complex hermitian case is indeed simpler is the fact that the determinant induces the bijection between factorization of $M$ and its determinant in Corollary~\ref{cor:complex_factorization}, whereas the bijection in the real symmetric case in Corollary~\ref{cor:real_factorization} is a lot more subtle and quite surprising (see Remark~\ref{rem:real_factorization_explicit}).

One of the central results for the proof of the above mentioned theorems is about Smith normal forms over the polynomial ring.
\begin{thm*}[Theorem~\ref{thm:factorization_snf}]
	Let $\K\in\{\R,\C\}$ and $M\in\Her_n(\K[t])$ be positive semidefinite with nonzero determinant. Then the equivalence classes of $n\times n$-factorizations $M=Q^*Q$ are in one-to-one correspondence to those of the monic Smith normal form of $M$. 
\end{thm*}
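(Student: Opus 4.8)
\emph{Proof plan.} The plan is to trade the factorizations for a lattice-theoretic object. Let $D$ denote the monic Smith normal form of $M$, equip $\Lambda_M:=\K[t]^n$ with the $\K[t]$-valued hermitian form $\langle x,y\rangle_M:=x^*My$, and likewise set $\Lambda_D:=(\K[t]^n,\langle\cdot,\cdot\rangle_D)$; since $M$ is positive semidefinite with $\det M\neq 0$ these are positive, nondegenerate, integral lattices. If $Q\in\Mat_n(\K[t])$ satisfies $Q^*Q=M$ then $\det Q\neq 0$, so $Q$ is invertible over $\K(t)$ and $N_Q:=\{x\in\K(t)^n:Qx\in\K[t]^n\}$ is a full $\K[t]$-lattice with $\Lambda_M\subseteq N_Q$; moreover $\langle x,y\rangle_M=(Qx)^*(Qy)\in\K[t]$ for $x,y\in N_Q$, so $Q$ is an isometry $(N_Q,\langle\cdot,\cdot\rangle_M)\xrightarrow{\ \sim\ }(\K[t]^n,I)$ and in particular $N_Q$ is a \emph{unimodular overlattice} of $\Lambda_M$. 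First I would show that $Q\mapsto N_Q$ induces a bijection
\[
	\{\,Q\in\Mat_n(\K[t]):Q^*Q=M\,\}\,/\,\U_n(\K)\ \longleftrightarrow\ \{\text{unimodular overlattices of }\Lambda_M\},
\]
where $\U_n(\K)$ is $\O_n(\R)$, resp.\ the unitary group (the equivalence of the statement). Surjectivity: a unimodular overlattice $N$ is a positive, nondegenerate, unimodular $\K[t]$-lattice of rank $n$, hence isometric to the standard one $(\K[t]^n,I)$ by the classical structure of such lattices; choosing an isometry $\psi$ and setting $Q:=\psi^{-1}|_{\K[t]^n}$ gives $Q^*Q=M$ with $N_Q=N$. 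Injectivity: if $N_{Q_1}=N_{Q_2}$ then $W:=Q_2Q_1^{-1}$ stabilizes $\K[t]^n$ and satisfies $W^*W=I$, so comparing top-degree coefficients forces $W$ constant, i.e.\ $Q_2=WQ_1$ with $W\in\U_n(\K)$. The same holds verbatim for $D$.

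The task is then to biject the unimodular overlattices of $\Lambda_M$ with those of $\Lambda_D$, and for this I would bring in torsion forms. With $\Lambda_M^{\sharp}:=\{x\in\K(t)^n:\langle x,\Lambda_M\rangle_M\subseteq\K[t]\}\supseteq\Lambda_M$, the finite-length quotient $A_M:=\Lambda_M^{\sharp}/\Lambda_M$ carries a discriminant form $q_M$ induced by $\langle\cdot,\cdot\rangle_M$ modulo $\K[t]$, and—as over $\Z$—an overlattice $\Lambda_M\subseteq N\subseteq\Lambda_M^{\sharp}$ is integral exactly when $N/\Lambda_M$ is isotropic in $(A_M,q_M)$, and unimodular exactly when $N/\Lambda_M$ is \emph{Lagrangian} (equal to its own orthogonal complement). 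Hence unimodular overlattices of $\Lambda_M$ correspond to Lagrangian submodules of $(A_M,q_M)$, and similarly for $D$; in particular both sets are empty unless $\det M$ is a square in $\K[t]$. So the theorem reduces to an isomorphism $(A_M,q_M)\cong(A_D,q_D)$.

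Since $M$ and $D$ are $\Gl_n(\K[t])$-equivalent, this last point reduces further to: the discriminant form of a positive semidefinite nondegenerate hermitian $\K[t]$-lattice is determined by its elementary divisors. This is local, $A_M=\bigoplus_{\p}(A_M)_{\p}$ over the monic primes $\p\mid\det M$, with $(A_M)_{\p}$ the discriminant form of the completion at $\p$. At a prime whose residue field is algebraically closed, every nondegenerate form is an orthogonal sum $\perp_i\p^{\,i}I_{r_i}$ up to isometry, hence determined by its elementary divisors. At a real prime $\p=(t-a)$ (residue $\R$, resp.\ $(\C,-)$ when $\K=\C$) positive semidefiniteness is decisive: in a Jordan splitting $\widehat{M_{\p}}\cong\perp_i\p^{\,i}M_i$ an odd-scale unimodular component $M_i$ would make $M(t_0)$ indefinite on the side of $a$ on which $\p^{\,i}$ is negative, so $M_i=0$ for $i$ odd, while for $i$ even $M_i$ must be positive definite, hence $\cong I_{r_i}$; thus $\widehat{M_{\p}}$ is again pinned down by its elementary divisors. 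The same computation shows every invariant factor $d_i$ has only even-order real roots, so $D$ is itself positive semidefinite and the argument applies to it as well, giving $\widehat{M_{\p}}\cong\widehat{D_{\p}}$ for all $\p$ and hence $(A_M,q_M)\cong(A_D,q_D)$.

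Chaining the bijections—$n\times n$ factorizations of $M$ modulo $\U_n(\K)$, unimodular overlattices of $\Lambda_M$, Lagrangians of $(A_M,q_M)\cong(A_D,q_D)$, unimodular overlattices of $\Lambda_D$, $n\times n$ factorizations of $D$ modulo $\U_n(\K)$—proves the theorem. The hard part will be the third paragraph: extracting from positive semidefiniteness the rigidity of the Jordan structure at the real primes of $\K[t]$ (equivalently, that $D$ is again positive semidefinite and that $A_M$ depends only on the Smith normal form). This is exactly the place where the real case genuinely needs the theory of quadratic forms and parts company with the complex case, which—through the matrix Fej\'er--Riesz theorem and the determinant, cf.\ Corollary~\ref{cor:complex_factorization}—admits a more elementary treatment.
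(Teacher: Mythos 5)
Your argument is correct in outline but it takes a genuinely different route from the paper's. The paper proves this theorem by composing two ingredients: the Cassels--Pfister bijection $\fac M{\K[t]}n\to\fac M{\cO_M}n$ (Theorem~\ref{thm:cassels_pfister}), and the congruence $M\simeq_{\cO_M}D$ over the semi-local ring $\cO_M$ (Theorem~\ref{thm:local_snf}), the latter being the hard step and requiring careful control of denominators (Lemma~\ref{lem:norm_denominators}). You instead translate factorizations into unimodular overlattices of $\Lambda_M=(\K[t]^n,\langle\cdot,\cdot\rangle_M)$, then into Lagrangian submodules of the discriminant form $(A_M,q_M)$, Nikulin-style, and reduce everything to the local isomorphism $\widehat{\Lambda_M}_\p\cong\widehat{\Lambda_D}_\p$ at the completions. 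This is a real trade: you only need congruence over the \emph{completed} local rings rather than over $\cO_M$ itself, so you can dodge the most technical part of the paper (the norm-denominator control in Lemma~\ref{lem:norm_denominators}, which lives precisely in the gap between $\R[t]_\p$ and $\R[[t-a]]$, where positive units fail to be squares), at the cost of importing the whole discriminant-form/glue-group apparatus over $\K[t]$, which the paper never develops.

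Two caveats you should not paper over. First, the surjectivity of $Q\mapsto N_Q$ hinges on the statement ``a positive nondegenerate unimodular $\K[t]$-lattice is isometric to the standard one'', which you call classical. This is exactly a (unimodular) case of matrix Fej\'er--Riesz / Cassels--Pfister and is the analogue of the paper's Theorem~\ref{thm:cassels_pfister_surjective} combined with Corollary~\ref{cor:rational_factorization}; for $\K=\R$ it already uses Witt's local-global principle. So this input is roughly as heavy as the paper's first main step, not an elementary black box, and your two approaches share this ingredient even if they diverge afterwards. Second, the Jordan-scale argument at a real prime $\p=(t-a)$ is phrased over the completion $\widehat{\K[t]}_\p$, whose elements cannot be evaluated at nearby $t_0\neq a$; to make the sign argument rigorous you should first take a Jordan splitting over the \emph{local} ring $\K[t]_\p$ (possible since $2$ is a unit), evaluate there to kill the odd-scale components and force the even-scale unimodular blocks positive definite, and only then pass to the completion, where Hensel's lemma makes those blocks $\cong\1$. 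With those points tightened, the chain of bijections you describe does prove the theorem, and the place you flag as hard --- the rigidity of the Jordan structure at real primes coming from positive semidefiniteness --- is indeed the genuinely real-closed input, mirroring the role of Proposition~\ref{prop:local_witt} in the paper.
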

\subsection*{Reader's Guide}
There are two main technical ingredients for the proofs of our main results (Corollaries~\ref{cor:complex_factorization} and \ref{cor:real_factorization}), which are the following. Let $\K\in \{\R,\C\}$ and $M\in\Her_n(\K[t])$ be positive semidefinite.
\begin{itemize}
	\item We first show that factorizations $M = Q^*Q$ over $\K[t]$ are essentially the same as those over the localization $\cO$ at zeros of the determinant of $M$ (Theorem~\ref{thm:cassels_pfister}).
	\item Then we show that $M$ and its monic Smith normal form become congruent over $\cO$ (Theorem~\ref{thm:local_snf}).
\end{itemize}

The presentation is structured as follows. After fixing our conventions and recalling basic definitions in Section~\ref{sec:prelim}, we describe a special consequence of Witt's local-global principle to hermitian squares over the rational function field $\R(t)$.
In Section~\ref{sec:cassels_pfister} we study how hermitian factorizations over the rational function field can be turned into factorizations over the polynomial ring. 
We prove the existence of factorizations $M = Q^T Q$ of the generically smallest possible size $r = n+1$ and characterize those $M$ that admit factorizations of square size $r=n$.
The main result for the count of the number of smallest hermitian square representations is Theorem~\ref{thm:cassels_pfister}, which establishes a correspondence of factorizations over the polynomial ring $\R[t]$ and the rational function field $\R(t)$. This accomplishes the first of the two main steps described above. The second one is the main result of Section~\ref{sec:snf} in which we show that a positive semidefinite symmetric (or hermitian) matrix is congruent to its Smith normal form, if we allow certain denominators in the congruence transformations. An essential technical difficulty is to control these denominators. We then combine these two steps, in the following Section~\ref{sec:proofs} to prove the main results. In a short Appendix, Section~\ref{sec:appendix}, we discuss applications of the prime avoidance lemma to hermitian forms that we need earlier in the paper.

\subsection*{Acknowledgments}
We are grateful to Markus Schweighofer. Our approach extends fruitful discussions with him. The first author is supported by the Faculty Research Development Fund (FRDF) of The University of Auckland (project no. 3709120). The second author would like to thank Bernd Sturmfels and the Max-Planck-Institute in Leipzig for their hospitality and support.

\section{Preliminaries}\label{sec:prelim}
We recall some notions from linear algebra and fix our notation and terminology to avoid confusion. As a general reference for the theory of quadratic forms over rings, we refer to \citep{OMeara13}.
\begin{itemize}
	\item The polynomial rings in this paper will be over the field of real numbers or the field of complex numbers. Many statements will be developed in parallel for both cases, so we use $\K$ to mean $\R$ or $\C$. We want to note that throughout the paper, $\R$ and $\C$ can be replaced by any real closed field and its algebraic closure, respectively.
	\item The polynomial ring $\C[t]$ in one variable over the complex numbers is equipped with an involution (written as $\cdot^*$), given by coefficient wise complex conjugation and $t^* = t$.
	\item In the following, let $R$ be a commutative ring with involution written as $\cdot^*$. It might be the trivial one, as is the case for $\R[t]$.
	\item We write $\1_n$ for the $n\times n$ identity matrix.
	\item We call a square matrix $M\in\Mat_n(R)$ with entries in a ring $R$ \emph{non-degenerate} if $\det(M)$ is nonzero.

	\item The \emph{adjoint} of a matrix $A\in \Mat_{m\times n}(R)$, denoted by $A^*$, is the entry-wise conjugate of the transpose $A^T$.
	\item We denote the set of \emph{hermitian} $n\times n$ matrices over a ring $R$, i.e.~those $A\in\Mat_n(R)$ such that $A^*=A$, by $\Her_n(R)$ .
	\item We write $\U_n(R)=\{\, U \in \Mat_n(R)\mid U^*U=\1_n\,\}$ for the \emph{unitary group} over $R$.
	\item If the involution on the ring $R$ is trivial, then $\Her_n(R)=\Sym_n(R)$ is the set of \emph{symmetric matrices} and $\U_n(R)=\O_n(R)$ is the \emph{orthogonal group}.
	\item Given two square matrices $A\in\Mat_n(R)$ and $B\in\Mat_m(R)$ we denote by $A\oplus B\in\Mat_{n+m}(R)$ the block diagonal matrix with blocks $A$ and $B$.
	\item We write $\langle a_1,\dots,a_n\rangle$ for the diagonal matrix with diagonal entries $a_1,\dots,a_n$.
	\item We call $M,N\in\Her_n(R)$ \emph{congruent over the ring $R$}, written as $M\simeq_R N$, if there exists an invertible matrix $S\in \Mat_n(R)$ over $R$ such that $M=S^*NS$.
	\item
		For $M\in\Her_n(R)$ and $k\in\N$, we denote by $\fac MRk$ the set of equivalence classes (orbits) of $k\times n$-factorizations of $Q$
		\[
			\fac MRk := \{\, Q\in \Mat_{k\times n}(R) \mid Q^*Q=M\,\}/\U_k(R)
		\]
		where unitary group $\U_k(R)$ acts on the left.
	\item For a principal ideal domain $R$ and a non-degenerate matrix $M\in\Mat_n(R)$ with determinant $d:=\det(M)$ we define
		\begin{align*}
			\cO_M:&=\left\{\, \frac{a}b\in\Quot(R) \mid b\text{ is coprime to $d$}\,\right\}\\
			&=\bigcap_{\p\in Z(d)}R_\p
		\end{align*}
		where $Z(d)$ is the set of prime ideals containing $d$.  
		$\cO_M$ is a semi-local principal ideal domain and will play a central role in what follows.
\end{itemize}

\subsection{Quadratic forms over the rational function field $\R(t)$}
One central piece of our argument over the real numbers relies on Witt's local-global principle, which states that every totally indefinite quadratic form of dimension at least three over a function field of transcendence degree one over $\R$ is isotropic, i.e.~represents zero non-trivially. A proof can be found in \citep[Theorem 3.4.11]{Prestel_Delzell01}. Essential for the present paper is the following consequence, which is well-known in the quadratic forms community.
\begin{cor}\label{cor:witt_lgp}
	Let $a,b\in\R(t)$ be nonzero and positive semidefinite. Then $\langle a,b\rangle$ represents $1$ over $\R(t)$. In particular,
	\[
		\langle a,b\rangle\simeq_{\R(t)} \langle1,ab\rangle.
	\]
\end{cor}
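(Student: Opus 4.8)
The plan is to apply Witt's local-global principle to the ternary form $\langle a,b,-1\rangle$ over $\R(t)$. First I would record the only input about positive semidefiniteness that is needed: a nonzero positive semidefinite $f\in\R(t)$ is positive with respect to \emph{every} ordering of $\R(t)$. This is the one-variable sum-of-squares fact: writing $f=p/q$ in lowest terms, the polynomial $pq=fq^2$ is nonnegative on $\R$, so all its real roots have even multiplicity and $pq$ is a sum of two squares in $\R[t]$; hence $f=pq/q^2$ is a sum of two squares in $\R(t)$, and in particular $f>_P0$ at each ordering $P$ (a sum of squares is $\geq_P0$, and it is $\neq0$).

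Next I would compute signatures. For any ordering $P$ of $\R(t)$ the form $\langle a,b,-1\rangle$ has signature $\operatorname{sgn}_P(a)+\operatorname{sgn}_P(b)+\operatorname{sgn}_P(-1)=1+1-1=1$, so it is indefinite at $P$; being indefinite at every ordering, it is totally indefinite. Since $\R(t)$ is a function field of transcendence degree one over $\R$ and $\dim\langle a,b,-1\rangle=3$, Witt's local-global principle (\citep[Theorem 3.4.11]{Prestel_Delzell01}) shows that $\langle a,b,-1\rangle$ is isotropic: there exist $x,y,z\in\R(t)$, not all zero, with $ax^2+by^2=z^2$. The binary form $\langle a,b\rangle$ is anisotropic, because $a,b>_P0$ forces $ax^2+by^2>_P0$ whenever $(x,y)\neq(0,0)$; hence in the isotropy relation we must have $z\neq0$, and therefore $a(x/z)^2+b(y/z)^2=1$, i.e. $\langle a,b\rangle$ represents $1$ over $\R(t)$.

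For the last assertion I would use the standard fact that a nondegenerate binary quadratic form representing $1$ is isometric to $\langle 1,d\rangle$, where $d$ is its determinant modulo squares: from $\langle a,b\rangle\simeq_{\R(t)}\langle1,c\rangle$ for some $c\in\R(t)^\times$, comparing determinants gives $ab=c$ modulo $(\R(t)^\times)^2$, whence $\langle a,b\rangle\simeq_{\R(t)}\langle1,ab\rangle$.

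The only genuinely substantive ingredient is Witt's local-global principle, which is quoted; the remaining steps are routine quadratic-form bookkeeping. The point requiring a little care is the translation in the first paragraph between ``positive semidefinite'' (nonnegativity along the real line) and positivity at all orderings of $\R(t)$, but this is exactly the elementary one-variable case of Artin's theorem and causes no real difficulty.
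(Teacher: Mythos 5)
Your proof is correct and follows the paper's approach essentially verbatim: apply Witt's local-global principle to the ternary form $\langle a,b,-1\rangle$, use positive semidefiniteness of $a,b$ to force $z\neq0$ in the isotropy relation, divide by $z$ to obtain a representation of $1$, and compare determinants to pass from $\langle 1,c\rangle$ to $\langle 1,ab\rangle$. The only difference is that you spell out, via orderings and signature computations, why $\langle a,b,-1\rangle$ is totally indefinite and why $\langle a,b\rangle$ is anisotropic---steps the paper takes as understood.
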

\begin{proof}
	Consider the totally indefinite form $\langle a,b,-1\rangle$ and apply Witt's local-global principle to get a representation nontrivial representation
	\[
		ax^2+by^2-z^2=0.
	\]
	Since $a$ and $b$ are positive semidefinite, $z$ must be nonzero. Dividing by $z$ and adding $1$ to the equation we get a representation of $1$ by the form $\langle a, b\rangle$. An appropriate base change thus yields $\langle a,b\rangle\simeq \langle 1,c\rangle$ for some $c\in \R(t)$. Comparing determinants we get that $ab$ and $c$ differ by a square. Rescaling the second basis vector, we may assume that $ab=c$.
\end{proof}

Applying this corollary inductively one can easily characterize those non-degenerate $M\in\Sym_n(\R(t))$ that admit a square factorization $M=Q^TQ$ over the rational function field.
\begin{cor}\label{cor:rational_factorization}
	Let $M\in\Sym_n(\R(t))$ be non-degenerate and positive semidefinite wherever it is defined.
	Then there exists $Q\in\Mat_n(\R(t))$ with $M=Q^TQ$ if and only if $\det(M)$ is a square in $\R(t)$.
\end{cor}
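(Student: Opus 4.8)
The plan is to establish the easy direction at once and then prove the substantive ``if'' direction by diagonalizing $M$ over the field $\R(t)$ and folding the diagonal entries together two at a time using Corollary~\ref{cor:witt_lgp}. For the ``only if'' direction: from $M=Q^TQ$ we get $\det(M)=\det(Q)^2$, a square in $\R(t)$.

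For the converse, I would first diagonalize: since $M$ is a non-degenerate symmetric form over the field $\R(t)$, there is an invertible $S\in\Mat_n(\R(t))$ with $S^TMS=\langle a_1,\dots,a_n\rangle$ and all $a_i\in\R(t)\setminus\{0\}$. The one genuinely non-formal point is to check that each $a_i$ is positive semidefinite. Let $D\subseteq\R$ be the finite set consisting of the poles of the entries of $M$ and of $S$, the zeros of $\det(M)$, and the poles and zeros of the $a_i$. For $t_0\in\R\setminus D$ the matrix $M(t_0)$ is a well-defined real symmetric matrix which is positive semidefinite by hypothesis and invertible because $\det(M)(t_0)\neq 0$, hence positive definite; Sylvester's law of inertia, applied to the invertible real matrix $S(t_0)$, shows that $\langle a_1(t_0),\dots,a_n(t_0)\rangle$ has the same signature, so $a_i(t_0)>0$ for every $i$. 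Thus each $a_i$ is a nonzero rational function that is strictly positive on a cofinite subset of $\R$; writing $a_i=p_i/q_i$ in lowest terms, the polynomial $p_iq_i$ is nonnegative on a cofinite set, hence everywhere, so $a_i=p_iq_i/q_i^2$ is positive semidefinite.

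Next I would run an induction on $n$ to show that any diagonal form $\langle a_1,\dots,a_n\rangle$ with the $a_i$ nonzero and positive semidefinite is congruent over $\R(t)$ to $\langle 1,\dots,1,d\rangle$ with $d:=a_1\cdots a_n$ in the last slot. For $n=1$ there is nothing to prove. For $n\geq 2$, apply Corollary~\ref{cor:witt_lgp} to the pair $a_1,a_2$ to get $\langle a_1,a_2\rangle\simeq_{\R(t)}\langle 1,a_1a_2\rangle$, hence $\langle a_1,\dots,a_n\rangle\simeq_{\R(t)}\langle 1\rangle\oplus\langle a_1a_2,a_3,\dots,a_n\rangle$. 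The $(n-1)$-dimensional form $\langle a_1a_2,a_3,\dots,a_n\rangle$ again has nonzero positive semidefinite diagonal entries, since a product of two nonzero positive semidefinite rational functions is strictly positive on a cofinite set and hence, by the argument above, positive semidefinite; so the inductive hypothesis yields $\langle a_1a_2,a_3,\dots,a_n\rangle\simeq_{\R(t)}\langle 1,\dots,1,d\rangle$, and therefore $\langle a_1,\dots,a_n\rangle\simeq_{\R(t)}\langle 1,\dots,1,d\rangle$. Combining with the diagonalization of $M$ gives $M\simeq_{\R(t)}\langle 1,\dots,1,d\rangle$.

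Finally, congruent forms have determinants differing by a square in $\R(t)\setminus\{0\}$, so $d=a_1\cdots a_n$ equals $\det(M)$ up to a square. If $\det(M)$ is a square in $\R(t)$, then so is $d$, say $d=c^2$; rescaling the last basis vector by $c^{-1}$ turns $\langle 1,\dots,1,d\rangle$ into $\1_n$, so $M\simeq_{\R(t)}\1_n$. Writing $M=S^T\1_nS=S^TS$ for the corresponding invertible $S$ and taking $Q:=S$ completes the proof. As indicated, the only step requiring care is the positivity of the diagonal entries $a_i$, which I isolated above; the remainder is precisely the two-at-a-time folding made possible by Witt's local--global principle through Corollary~\ref{cor:witt_lgp}.
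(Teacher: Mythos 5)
Your proof is correct and follows essentially the same route as the paper: diagonalize $M$ as a quadratic form over $\R(t)$, fold the diagonal entries two at a time using Corollary~\ref{cor:witt_lgp}, and compare determinants. The only difference is that you explicitly justify that the diagonal entries $a_i$ are positive semidefinite (via evaluation at generic real points and Sylvester's law of inertia), a step the paper leaves implicit but which is indeed needed to invoke Corollary~\ref{cor:witt_lgp}.
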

\begin{proof}
	Clearly, if $M=Q^TQ$ then $\det(M)=(\det Q)^2$ is a square. Conversely, assume that $d:=\det(M)$ is a square. After diagonalization of $M$ (as a quadratic form) we may assume that $M=\langle a_1,\dots,a_n\rangle$ for some $a_i\in\R(t)$. Applying Corollary~\ref{cor:witt_lgp} $n-1$ times, we get
	\[
		M\simeq_{\R(t)}\langle 1,a_1a_2,a_3\dots,a_n\rangle\simeq_{\R(t)}\dots\simeq_{\R(t)}\langle 1,\dots,1,\prod_{i=1}^na_i\rangle.
	\]
	Since $\prod_{i=1}^na_i=d$ is a square, we therefore have $M\simeq\1_n$.
\end{proof}

A classical theorem due to Cassels \citep{Cassels64} (and shortly after generalized by Pfister \citep{Pfister65}) says that for any field $k$ a polynomial $f\in k[t]$ that is a sum of squares of rational functions is already a sum of squares of polynomials (with the same number of squares).
Tignol proved in \citep{Tignol96} a version for univariate polynomial rings over central simple algebras, which, applied to the matrix algebra $\Mat_n(\K)$, gives that any matrix $M\in\Her_n(\K[t])$ that admits a rational factorization $M=Q^*Q$, where $Q\in\Mat_n(\K(t))$, also admits a polynomial factorization $M=P^*P$, where $P\in\Mat_n(\K[t])$.
This statement can also be shown using techniques of Leep's from \cite{Leep06}. We will prove it below, see Corollary~\ref{cor:polynomial_factorization}, as a result of a more elementary and more explicit proof of a Cassels-Pfister Theorem for matrices over $\K[t]$, which we give in the following section.

\section{A Cassels-Pfister Theorem for Matrices}\label{sec:cassels_pfister}
In this section, we present an elementary and explicit proof of the known fact that any matrix $M\in\Her_n(\K[t])$ that admits a rational factorization $M=Q^*Q$, where $Q\in\Mat_n(\K(t))$, also admits a polynomial factorization $M=P^*P$, where $P\in\Mat_n(\K[t])$. Our approach allows us to investigate the pole behavior more closely. The ring $\cO_M$ that we associate to a non-degenerate matrix $\Mat_n(\K[t])$ will play a central role. It consists of those rational functions, that have no poles wherever $M$ becomes degenerate. We prove that, up to equivalence, factorizations of $M$ over $\cO_M$ correspond exactly to factorizations over the polynomial ring, see Theorem~\ref{thm:cassels_pfister}.

\subsection{Splitting-off Matrix Zeros}

To every zero of a (scalar) polynomial corresponds a linear factor that can be split off. Almost the same can be done in the matrix case, if we take care of the order of multiplication in evaluating matrix polynomials: Let $P = \sum_i P_i t^i\in \Mat_n(\K[t])$ be a matrix polynomial and let $A\in \Mat_n(\K)$ be a constant matrix. We fix the notation
\begin{align*}
	P_A:&=\sum_iP_iA^i, \\
	\leftidx{_A}P:&=\sum_iA^iP_i
\end{align*}
for the right and left evaluation of $P$ at $A$, respectively.
\begin{lem}
	For $P$ and $A$ as above, we have
	\[
		P_A=0\Leftrightarrow\exists S\in\Mat_n(\K[t])\colon P=S(t\1_n-A)
	\]
	and
	\[
		\leftidx{_A}P=0\Leftrightarrow\exists S\in\Mat_n(\K[t])\colon P=(t\1_n-A)S.
	\]
\end{lem}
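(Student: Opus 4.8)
The plan is to mimic the scalar "remainder theorem" in the noncommutative polynomial setting, being careful about the side on which the linear factor $t\1_n - A$ sits. I will prove the first equivalence; the second follows by applying the first to the transpose/adjoint, or by running the identical argument from the other side.

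The direction "$\Leftarrow$" is a direct computation: if $P = S(t\1_n - A)$ with $S = \sum_j S_j t^j$, then writing out $P = \sum_j S_j t^{j+1} - \sum_j S_j A t^j$ and collecting, we have $P_i = S_{i-1} - S_i A$ (with $S_{-1} = 0$ and $S_j = 0$ for large $j$). Then $P_A = \sum_i (S_{i-1} - S_i A) A^i = \sum_i S_{i-1} A^i - \sum_i S_i A^{i+1}$, which telescopes to $0$. The key point to stress here is that the factor $(t\1_n - A)$ is multiplied on the \emph{right}, so that after evaluating at $A$ on the right (powers $A^i$ appended on the right) everything commutes appropriately and the sum collapses.

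For the direction "$\Rightarrow$", assume $P_A = 0$. The idea is to perform division with remainder by the \emph{monic} degree-one matrix polynomial $t\1_n - A$ on the right: since $t\1_n - A$ has invertible (indeed identity) leading coefficient, for any $P$ there exist unique $S \in \Mat_n(\K[t])$ and a constant remainder $R \in \Mat_n(\K)$ with $P = S(t\1_n - A) + R$; this is the standard Euclidean-type algorithm, carried out by repeatedly subtracting $(\text{leading term of current polynomial}) \cdot t^{\deg - 1}\cdot(t\1_n - A)$, which strictly lowers the degree. Now evaluate on the right at $A$: by the "$\Leftarrow$" computation just done, $\big(S(t\1_n-A)\big)_A = 0$, and $R_A = R$ since $R$ is constant, so $0 = P_A = R$. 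Hence $P = S(t\1_n - A)$, as desired.

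The only mild subtlety — and the thing I would be most careful about — is the interaction between the side of multiplication and the side of evaluation: right-division must be paired with right-evaluation $P_A = \sum_i P_i A^i$ for the telescoping identity $\big(S(t\1_n-A)\big)_A = 0$ to hold, whereas a left factor $(t\1_n - A)S$ kills the \emph{left} evaluation $\leftidx{_A}P$. Once this bookkeeping is fixed, both parts are routine; I would simply state the right-division algorithm (or cite it as standard, since $t\1_n - A$ is monic) and do the two-line telescoping check. No issue of well-definedness or convergence arises since all sums are finite.
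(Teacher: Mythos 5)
Your proof is correct and is essentially the same as the paper's: the paper directly factors $P = P - P_A = \sum_i P_i(t^i\1_n - A^i)$ using the telescoping identity $(t^k\1_n - A^k) = (t^{k-1}\1_n + \dots + A^{k-1})(t\1_n - A)$, which is precisely how the right-division-with-remainder you invoke is carried out. Both proofs rest on the same observation that $t\1_n - A$ is monic and that evaluating a right multiple $S(t\1_n - A)$ at $A$ on the right gives zero.
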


\begin{proof}
	Just as in the scalar case, we use the identity
	\[
		(t^k \1_n -A^k) = (t^{k-1}\1_n + t^{k-2}A + \dots + A^k)(t\1_n -A)
	\]
	to split off $(t\1_n -A)$ from the right of $P-P_A$ which coincides with $P$ if $P_A = 0$. The argument obviously can be adapted to the case $\leftidx{_A}P=0$.
\end{proof}

For a scalar polynomial $q\in \K[t]$ and a complex number $z\in\C$ which is a zero of $q^*q$, we can conclude that $z$ or $z^*$ must be a zero of $q$. The following proposition shows how this can be generalized to matrix polynomials.
\begin{prop}\label{prop:matrix_zeros}
	Let $Q\in\Mat_n(\K[t])$ and $z\in\C$ such that $(Q^*Q)(z)=0$. In case $\K=\R$ we further assume that $n$ is even. Then there exists a constant matrix $A\in \Mat_n(\K)$ with the following properties:
	\begin{itemize}
		\item $A$ is normal and its only eigenvalues are $z$ and $z^*$.
		\item $A$ is a zero of $Q$ under left evaluation, i.e.~$\leftidx{_A}Q=0$. 
	\end{itemize}
	In particular, we can split off a linear factor from $Q$
	\[
		Q=(t\1_n-A)P\quad \text{for some}\quad P\in\Mat_n(\K[t])
	\]
	and we have $(t\1_n-A)^*(t\1_n-A)=(t-z)^*(t-z)\1_n$.
\end{prop}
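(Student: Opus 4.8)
The plan is to extract the matrix $A$ from the kernel of the constant matrix $Q(z)$, viewed over $\C$. Since $(Q^*Q)(z) = Q^*(z)Q(z) = Q(z)^*Q(z) = 0$ (using that the involution is complex conjugation on coefficients and fixes $t$, so $Q^*(z) = \overline{Q(\bar z)}^{\,T} $ — let me be careful: $(Q^*)(z) = \sum \overline{Q_i}\, z^i$, whereas $Q(z)^* = \sum \overline{Q_i}\,\overline{z}^{\,i}$; the hypothesis $(Q^*Q)(z)=0$ literally says $\bigl(\sum_i \overline{Q_i} z^i\bigr)\bigl(\sum_j Q_j z^j\bigr) = 0$). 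First I would handle the case $z \in \R$: then $Q^*(z) = \overline{Q(z)}$, so $\overline{Q(z)}^{\,T}\!\cdot$ — wait, $Q^*$ is the coefficient-conjugate of $Q^T$, so $(Q^*Q)(z) = \overline{Q(z)^T}\, Q(z)$ only when $z$ is real; in any case for real $z$ the hypothesis forces $Q(z) = 0$ over $\C$ hence over $\K$, and we may take $A = z\1_n$, which is normal, has the single eigenvalue $z = z^*$, satisfies $\leftidx{_A}Q = 0$ by the previous lemma applied with the scalar root, and the final identity is immediate. So the substance is the case $z \notin \R$.

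For $z \notin \R$, consider $V = \ker Q(z) \subseteq \C^n$ and $W = \ker \overline{Q(\bar z)} = \ker \overline{Q(z^*)}$. The key linear-algebra observation is that the hypothesis $(Q^*Q)(z) = 0$ says precisely that the column space of $Q(z)$ lies in the kernel of $\sum_i \overline{Q_i} z^i = \overline{Q(\bar z)^T}\big|_{\text{coeffs conj}}$... I would rewrite cleanly: set $B := Q(z) \in \Mat_n(\C)$ and $C := \sum_i \overline{Q_i}\, z^i$; then the hypothesis is $CB = 0$, i.e. $\im B \subseteq \ker C$. Note $C = \overline{Q(\bar z)^T}$? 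No: $C$ has $(j,k)$-entry $\sum_i \overline{(Q_i)_{kj}} z^i = \overline{\sum_i (Q_i)_{kj} \bar z^{\,i}} = \overline{Q(\bar z)_{kj}} = \overline{Q(\bar z)}^{\,T}_{jk}$, so indeed $C = \overline{Q(\bar z)}^{\,T}$. Thus $\ker C = \bigl(\im \overline{Q(\bar z)}\bigr)^{\perp_{\mathrm{bilinear}}}$... rather $\ker C = (\text{row space of } C)^\perp$, and row space of $C$ = conjugate of column space of $Q(\bar z)$. The cleaner route: since $CB = 0$ with $C = \overline{Q(\bar z)}^T$, we get $\overline{B}^{\,T}\,\overline{C}^{\,T} = 0$, i.e. $Q(z)^* Q(\bar z)$... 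I will not untangle this in the sketch; the point is that I expect to produce a decomposition $\C^n = U \oplus \bar U$ where $\bar U$ is the entrywise conjugate of $U$, with $Q(z)$ vanishing appropriately on one summand. Then define $A$ to act as $z$ on $U$ and as $z^*$ on $\bar U$; because the decomposition is conjugation-stable, $A$ commutes with complex conjugation and hence lies in $\Mat_n(\R)$ when $\K = \R$ (this is where the parity hypothesis $n$ even enters, since $\dim_\C U = \dim_\C \bar U = n/2$), and $A$ is normal (diagonalizable with an orthogonal/unitary eigenbasis, as $z \ne z^*$ gives orthogonal eigenspaces after choosing a suitable inner product — actually one must choose the basis of $\bar U$ to be the conjugate of that of $U$ and check normality directly). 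Once $\leftidx{_A}Q = 0$ is verified, the previous lemma gives $Q = (t\1_n - A)P$, and the identity $(t\1_n - A)^*(t\1_n - A) = (t - z)^*(t - z)\1_n$ follows from $A^*A = AA^* = z z^* \1_n$ and $A + A^* = (z + z^*)\1_n$, i.e. $A$ satisfies its "characteristic" quadratic $A^2 - (z+z^*)A + zz^*\1_n = 0$ with real coefficients.

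The remaining, and I think principal, obstacle is establishing $\leftidx{_A}Q = 0$, i.e. $\sum_i A^i Q_i = 0$. Since $A$ is diagonalizable over $\C$ with eigenvalues $z, z^*$, this is equivalent to checking that on the $z$-eigenspace the left action reproduces $Q(z)$ and on the $z^*$-eigenspace it reproduces $Q(z^*)$ — more precisely, writing $\pi, \bar\pi$ for the spectral projections onto $U, \bar U$, one needs $\pi\, Q_i = $ (contribution) so that $\sum_i A^i Q_i = \sum_i (z^i \pi + (z^*)^i \bar\pi) Q_i = \pi Q(z) + \bar\pi\, Q(z^*)$ vanishes. This forces the defining requirement on $U$: we need $\pi Q(z) = 0$ and $\bar\pi Q(z^*) = 0$, i.e. $\im Q(z) \subseteq \ker \pi = \bar U$ and $\im Q(z^*) \subseteq \ker\bar\pi = U$. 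So the real content is to find a conjugation-stable splitting $\C^n = U \oplus \bar U$ with $\im Q(z) \subseteq \bar U$ and $\im Q(z^*) \subseteq U$; equivalently $\im Q(z) + \im Q(z^*) $ must be arrangeable inside such a pair, which is exactly what the hypothesis $(Q^*Q)(z) = 0$ — unwound as an orthogonality/containment between $\im Q(z)$ and $\im Q(\bar z)$ relative to the standard bilinear form — should deliver, after a dimension count using that $\im Q(z)$ and $\overline{\im Q(z)} = \im \overline{Q(\bar z)}$ are "bilinearly orthogonal", so their dimensions sum to at most $n$, leaving room to complete to complementary conjugation-stable subspaces. Carrying out this dimension bookkeeping and checking the normality of the resulting $A$ are the routine-but-delicate steps I would expect to occupy the actual proof.
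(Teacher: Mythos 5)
Your overall plan is the paper's: build $A$ diagonalizable with eigenvalues $z,z^*$ on a decomposition $\C^n=U\oplus\bar U$, and reduce $\leftidx{_A}Q=0$ to containments of $\im Q(z)$ and $\im Q(z^*)$ in the two summands. Your key reduction, $\leftidx{_A}Q=\pi Q(z)+\bar\pi Q(z^*)$ with both terms forced to vanish separately, is exactly what the paper does, and your closing algebra for $(t\1_n-A)^*(t\1_n-A)$ is correct. The treatment of $z\in\R$ is a harmless special case (the paper's construction covers it uniformly).

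However, the step you defer as ``routine-but-delicate bookkeeping'' is the substance of the proof, and your sketch does not contain the idea that fills it. If $U\oplus\bar U=\C^n$ is an \emph{arbitrary} conjugation-stable direct sum, the operator $A=z\pi_U+z^*\pi_{\bar U}$ need not be normal: normality requires the spectral projections to be orthogonal projections, i.e.\ $U\perp\bar U$ in the standard Hermitian inner product. This is not automatic. For instance in $\C^2$ with $U=\operatorname{span}(e_1+2ie_2)$ one has $U\oplus\bar U=\C^2$ but $\langle e_1+2ie_2,\,e_1-2ie_2\rangle=-3\neq 0$, so the resulting $A$ is not normal; ``check normality directly'' would simply fail for a badly chosen $U$. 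Also, your phrase ``$\im Q(z)$ and $\overline{\im Q(z)}$ are bilinearly orthogonal'' is off: $\beta(v,\bar w)=v^T\bar w=w^*v$, so that condition would force $\im Q(z)=0$. What the hypothesis actually gives is the \emph{Hermitian} orthogonality $\im Q(z)\perp\overline{\im Q(z)}$, which is the same as saying $\im Q(z)$ is totally isotropic for the bilinear form $\beta(v,w)=v^Tw$.

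The missing ingredient, which the paper supplies as Lemma~\ref{lem:conjugate_complement}, is this: over $\C$ with $n$ even, $\beta$ is hyperbolic, so any \emph{maximal} totally isotropic $U\supseteq\im Q(z)$ has dimension exactly $n/2$; a dimension count then forces $\bar U=U^\perp$. That single fact simultaneously delivers the direct sum $U\oplus\bar U=\C^n$, the Hermitian orthogonality needed for normality of $A$, and (via $\overline{\pi_U}=\pi_{\bar U}=\pi_{U^\perp}$) the reality of $A$. Without the hyperbolicity/maximality argument you have no guarantee the maximal isotropic $U$ reaches dimension $n/2$, and the whole construction would collapse. Finally, note that for $\K=\C$ none of the conjugation apparatus is needed --- the paper just takes $U=\im Q(z)$ and its Hermitian orthogonal complement --- and your sketch should separate that (much easier) case explicitly.
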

Before proving this proposition, we give the following simple observation that we need for the case $\K=\R$. Its use in the proof of Proposition~\ref{prop:matrix_zeros} has been inspired by a similar argument in \citep{FRS06}. In order to distinguish from taking the adjoint, we denote the entry-wise complex conjugation of a vector $v\in\C^n$ or a matrix $M\in\Mat_n(\C)$ by $\overline{v}$ and $\overline{M}$, respectively.

\begin{lem}\label{lem:conjugate_complement}
	Let $n$ be even and let $U\subseteq \C^n$ be a subspace that is orthogonal to its conjugate $\overline{U}$ and maximal under inclusion with this property. Then $\dim U=\frac{n}2$. In particular, $\overline{U}$ is the orthogonal complement of $U$.
\end{lem}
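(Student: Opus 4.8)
The plan is to translate the hermitian hypothesis into a statement about the standard \emph{symmetric} bilinear form $\be(x,y) = x^Ty = \sum_{i=1}^n x_iy_i$ on $\C^n$. For $u,w\in\C^n$ one has $u^T\overline{\overline{w}} = \sum_i u_iw_i = \be(u,w)$, so the hermitian orthogonality relation ``$u$ is orthogonal to $\overline w$'' is the same as $\be(u,w)=0$; since this is symmetric in $u$ and $w$, the condition that $U$ be orthogonal to $\overline U$ is exactly the condition that $U$ be totally isotropic for $\be$. Hence the hypothesis says precisely that $U$ is a \emph{maximal} totally isotropic subspace of the non-degenerate form $(\C^n,\be)$, and the claim $\dim U = n/2$ is then the statement that the Witt index of $\be$ equals $n/2$.

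To prove this directly, first note the easy bound: a totally isotropic $U$ is contained in its $\be$-orthogonal complement $U^\perp$, and since $\be$ is non-degenerate $\dim U^\perp = n-\dim U$, so $\dim U \le n/2$. For the reverse inequality I would argue by contradiction using maximality. Suppose $\dim U = k < n/2$; since $n$ is even, $k\le n/2-1$, so $\dim(U^\perp/U) = n-2k \ge 2$. The form $\be$ restricted to $U^\perp$ has radical $U^\perp\cap U = U$, hence induces a non-degenerate symmetric bilinear form on the quotient $U^\perp/U$. As $\C$ is algebraically closed and $\dim(U^\perp/U)\ge 2$, this induced form is isotropic, so there is a nonzero isotropic class; any lift $v\in U^\perp\setminus U$ of it satisfies $\be(v,v)=0$ and $\be(v,u)=0$ for all $u\in U$, whence $U+\C v$ is totally isotropic and strictly contains $U$, contradicting maximality. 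Therefore $\dim U = n/2$. Finally, $U$ orthogonal to $\overline U$ (hermitian) gives $\overline U\subseteq U^\perp$, and since $\dim U^\perp = n - n/2 = n/2 = \dim\overline U$, this inclusion is an equality, which is the asserted ``in particular''.

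The only substantive input is that a non-degenerate quadratic form of dimension at least $2$ over $\C$ (equivalently, over the algebraically closed field under consideration) is isotropic; the parity of $n$ enters exactly to ensure that below the middle dimension we always have $n-2k\ge 2$, so this isotropy is available at every step and the extension process cannot stop before reaching dimension $n/2$. Alternatively, one may simply cite the classical fact that maximal totally isotropic subspaces of a non-degenerate bilinear form over an algebraically closed field all have dimension equal to the Witt index $\lfloor n/2\rfloor$, and apply it to $(\C^n,\be)$; I expect the self-contained argument above is short enough to be preferable.
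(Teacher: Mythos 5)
Your proof is correct. The key translation, identifying hermitian orthogonality of $U$ with $\overline U$ as total isotropy of $U$ with respect to the symmetric bilinear form $\be(x,y)=x^Ty$, is exactly the step the paper takes. Where you diverge is in how the dimension count is concluded: the paper simply notes that, since $-1$ is a square in $\C$ and $n$ is even, $\be$ is hyperbolic, and then cites the standard fact (Lam, Corollary~I.4.4) that every maximal totally isotropic subspace of a hyperbolic form has dimension $n/2$. You instead reprove that fact from scratch over $\C$: the inclusion $U\subseteq U^\perp$ gives $\dim U\le n/2$, and if $\dim U<n/2$ the non-degenerate induced form on $U^\perp/U$ has dimension at least $2$, hence is isotropic over the algebraically closed field $\C$, allowing you to extend $U$ and contradict maximality. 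Your version is more self-contained and elementary, at the cost of a couple of extra paragraphs; the paper's version is shorter because it leans on standard quadratic form theory that is already invoked elsewhere in the article. Both correctly use evenness of $n$ (the paper to get hyperbolicity, you to guarantee $n-2k\ge2$ strictly below the middle dimension), and the ``in particular'' clause follows in both by the dimension count $\dim\overline U=\dim U=n/2=\dim U^\perp$.
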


\begin{proof}
	The condition $\overline{U}\perp U$ (with respect to the standard hermitian inner product) just means that $U$ is totally isotropic with respect to the bilinear form
	\begin{align*}
		\be\colon \C^n\times\C^n &\to\C\\
		(v,w)&\mapsto v^Tw.
	\end{align*}
	Since $-1$ is a square in $\C$ and $n$ is even, $\be$ is hyperbolic and thus every maximal totally isotropic subspace is of dimension $\frac{n}2$ (see for example \citep[Corollary~I.4.4]{Lam05}).
\end{proof}

\begin{proof}[of Proposition \ref{prop:matrix_zeros}]
	The condition $0=Q^*(z)Q(z)=(Q(z^*))^*Q(z)$ can be read as orthogonality of the images of the linear maps $Q(z)$ and $Q(z^*)$ in $\C^n$, that is,
	\[
		\im Q(z)\perp \im Q(z^*).
	\]
	We are going to choose a subspace $U\subseteq\C^n$ such that
	\[
		\im Q(z)\subseteq U\quad\text{and}\quad U\perp\im Q(z^*).\tag{$\ast$}
	\]
	For $\K=\C$, we just take $U=\im Q(z)$. For $\K=\R$, the appropriate choice of $U$ will ensure that the entries of the constructed matrix lie in $\R$. Rewriting ($\ast$) yields
	\[
		U^{\perp}\subseteq\im Q(z)^{\perp}= \ker Q^*(z^*)
		\text{\quad and \quad} 
		U\subseteq\im Q(z^*)^{\perp}=\ker Q^*(z)
		.\tag{$\ast\ast$}
	\]
	We choose $A$ to be the representing matrix of the operator $z^*\pi_U+z\pi_{U^{\perp}}\in\End(\C^n)$, where $\pi_U$ and $\pi_{U^{\perp}}$ are the orthogonal projections onto $U$ and $U^{\perp}$, respectively. In other words $A^*$ acts on $U$ as multiplication by $z$ and on $U^{\perp}$ as multiplication by $z^*$. Combining this with ($\ast\ast$) we conclude
	\[
		\forall u\in U\colon Q^*_{A^*}u=Q^*(z)u=0
	\]
	as well as
	\[
		\forall w\in U^{\perp}\colon Q^*_{A^*}w=Q^*(z^*)w=0.
	\]
	Since $U$ and $U^{\perp}$ span $\C^n$, this means $Q^*_{A^*}=0$, or equivalently $\leftidx{_A}Q=0$. Moreover, $A$ clearly is normal and its only eigenvalues are $z$ and $z^*$, as desired.

	If $\K=\C$ we are done at this point. So for the rest of the proof we assume $\K=\R$ and $n$ is even. In this case, $\overline{Q}=Q$. Since $\im Q(z)$ is orthogonal to $\im Q(z^*)=\im\overline{Q}(z^*)=\overline{\im Q(z)}$ we also have
	\[
		\im Q(z)\perp\overline{\im Q(z)}.
	\]
	We choose a subspace $U\subseteq\C^n$ containing $\im Q(z)$ and maximal with $U\perp \overline{U}$. Since $\overline{U}\supseteq \overline{\im Q(z)}=\im Q(z^*)$ we also have $U\perp \im Q(z^*)$ as required in ($\ast$). 
	Due to the maximality of $U$ it is the orthogonal complement of its conjugate $\overline{U}$, as observed in Lemma~\ref{lem:conjugate_complement}. In particular $\pi_{\overline{U}}=\pi_{U^{\perp}}$.
	
	It is easily seen that the conjugate of the representing matrix of $\pi_U$ is the representing matrix of $\pi_{\overline{U}}=\pi_{U^{\perp}}$. Using this, it is clear that the matrix $A$ we constructed with the above choice of $U$ has real entries.
\end{proof}

\subsection{Pole Cancellation}
In this subsection, we show how to produce polynomial factorizations from given rational ones using unitary matrices ``capturing'' the poles of the factors, see Theorem~\ref{thm:cassels_pfister}. In the complex case $\K=\C$, our approach is similar to the approach in \citep{ESS16}.

\begin{rem}\label{rem:degreesos}
	In the following, we will often use the simple fact that if $\sum_ia_i^*a_i = 1$ for some polynomials $a_i\in\K[t]$, then all $a_i$ are in fact constant. The reason is that all leading coefficients of the $a_i^*a_i$ are positive and hence the coefficients of the highest degree term cannot cancel each other. In particular, a polynomial unitary matrix $U\in\U_n(\K[t])$ has constant entries, i.e.~$U\in\U_n(\K)$.
\end{rem}

\begin{lem}\label{lem:one_cancellation}
	For any $M\in\Her_n(\K[t])$ and $k\in \N$, the classes of square factorizations of $M$ correspond one-to-one to those of $M\oplus \1_k$. More precisely, the map
	\begin{align*}
		\fac M{\K[t]}{n}&\to\fac {M\oplus\1_k}{\K[t]}{n+k}\\
		[Q]&\mapsto [Q\oplus\1_k]
	\end{align*}
	is a bijection. 
\end{lem}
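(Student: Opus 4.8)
The map is clearly well defined: if $Q^*Q = M$ then $(Q\oplus\1_k)^*(Q\oplus\1_k) = M\oplus\1_k$, and replacing $Q$ by $UQ$ with $U\in\U_n(\K)$ (recall from Remark~\ref{rem:degreesos} that polynomial unitaries are constant) replaces $Q\oplus\1_k$ by $(U\oplus\1_k)(Q\oplus\1_k)$, so orbits go to orbits. The content is injectivity and surjectivity, and both will follow from a single structural observation about an arbitrary factorization $P^*P = M\oplus\1_k$ with $P\in\Mat_{(n+k)\times(n+k)}(\K[t])$.

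Write $P$ in block-column form according to the splitting $\K^{n+k} = \K^n\oplus\K^k$ of the \emph{domain}, say $P = \begin{pmatrix} P_1 & P_2\end{pmatrix}$ with $P_1$ of size $(n+k)\times n$ and $P_2$ of size $(n+k)\times k$. The equation $P^*P = M\oplus\1_k$ reads $P_1^*P_1 = M$, $P_1^*P_2 = 0$, and $P_2^*P_2 = \1_k$. The last equation says the columns of $P_2$ are a ``unitary $k$-frame'' of polynomial vectors; by the leading-coefficient argument of Remark~\ref{rem:degreesos} applied entrywise to $P_2^*P_2$, the matrix $P_2$ has constant entries, so its columns span a fixed $k$-dimensional subspace $W\subseteq\K^{n+k}$ on which the form is the standard one. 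Extend $P_2$ to a constant unitary $U = \begin{pmatrix} P_2 & P_2'\end{pmatrix}\in\U_{n+k}(\K)$; applying $U^{-1} = U^*$ on the left to $P$ is an allowed equivalence and transforms $P$ into $\begin{pmatrix} (P_2')^*P_1 & \1_k\end{pmatrix}$ — here I use $P_1^*P_2 = 0$, which forces the top $k$ rows of $(P_2')^*$-free part: more precisely $U^*P = \begin{pmatrix} P_2^*P_1 & \1_k \\ (P_2')^*P_1 & 0\end{pmatrix}$, and $P_2^*P_1 = (P_1^*P_2)^* = 0$, so $U^*P = \begin{pmatrix} 0 & \1_k \\ Q & 0\end{pmatrix}$ where $Q := (P_2')^*P_1\in\Mat_{n\times n}(\K[t])$ satisfies $Q^*Q = P_1^*P_2'(P_2')^*P_1 = P_1^*(\1_{n+k} - P_2P_2^*)P_1 = P_1^*P_1 = M$ (using that $P_2P_2^* $ is the orthogonal projection onto $W$ and $\im P_1\perp W$ since $P_1^*P_2 = 0$). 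Finally a constant column permutation (a unitary) brings $U^*P$ to the form $Q\oplus\1_k$. Thus every class in $\fac{M\oplus\1_k}{\K[t]}{n+k}$ has a representative of the form $Q\oplus\1_k$, giving surjectivity, and the same normal-form computation shows that if $Q\oplus\1_k$ and $Q'\oplus\1_k$ are equivalent via a constant unitary $U\in\U_{n+k}(\K)$ then tracking $U$ through the above shows $Q$ and $Q'$ are equivalent via a constant unitary in $\U_n(\K)$, giving injectivity.

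The main obstacle is the bookkeeping in the ``complementary block'' argument: one must be careful that the unitary $U$ used to straighten $P_2$ is genuinely constant (this is exactly where $P_2^*P_2 = \1_k$ and Remark~\ref{rem:degreesos} are essential — otherwise one could only straighten $P_2$ over $\K(t)$ and would leave the polynomial world), and one must verify that the resulting $Q$ lands back in $\Mat_{n\times n}(\K[t])$ rather than merely $\Mat_{(n+k)\times n}(\K[t])$, which is why it is important to split off the \emph{last} $k$ rows using $P_1^*P_2 = 0$ before reading off $Q$. Everything else is the linear algebra of orthogonal complements of constant subspaces, carried out over $\K[t]$ but really happening over $\K$ on the $W$-component.
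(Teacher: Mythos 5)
Your argument is correct and follows essentially the same route as the paper: the block $P_2$ is forced to be constant by Remark~\ref{rem:degreesos}, is extended to a constant unitary $U$, and $U^*P$ then has the desired block form with $Q^*Q = M$. The only slip is that the final block swap must be a \emph{row} permutation applied on the left (the allowed side of the $\U_{n+k}(\K)$-action), not a column permutation; the paper's proof avoids this step entirely by placing $P_2$ in the \emph{last} $k$ columns of $U$ rather than the first, so that $U^*P$ is already of the form $Q\oplus\1_k$.
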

\begin{proof}
	To show injectivity, let $ U (Q_1 \oplus \1_k) = Q_2 \oplus \1_k$ for some unitary $(n+k)$ matrix. 
	Then $U$ must be of the form $U_1\oplus\1_k$ and we have $U_1Q_1 = Q_2$, that is $[Q_1] = [Q_2]$. 
	To show surjectivity, let $P\in\Mat_{(n+k)}(\K[t])$ such that $P^*P=M\oplus\1_k$. Then the last $k$ columns of $P$ form an orthonormal system and therefore must have constant entries, see Remark~\ref{rem:degreesos}. Extending them to an orthonormal basis of $\K^{n+k}$ shows that there exists a unitary matrix $U\in U_{n+k}(\K)$ which has the same last $k$ columns as $P$. Then $U^*P=Q\oplus \1_k$ for some $Q\in\Mat_n(\K[t])$ with $Q^*Q=M$.
\end{proof}

We use the fact that we can split off linear factors coming from matrix zeros in order to show that we can get rid of poles in (rational) factorizations of polynomial matrices. 

\begin{thm}\label{thm:cassels_pfister_surjective}
	Let $\cO$ be a $*$-invariant subring of $\K(t)$ containing $\K[t]$ and let $S\in \Mat_n(\cO)$. 
	If $S^*S$ has polynomial entries, then there exists a unitary matrix $U\in \U_n(\cO)$ such that $US$ has polynomial entries. In other words, for positive semidefinite $M\in\Her_n(\K[t])$ the canonical map
	\[
		\fac M{\K[t]}n\to\fac M{\cO}n
	\]
	is surjective.
\end{thm}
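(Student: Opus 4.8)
The plan is to proceed by induction on the "denominator" of $S$, measured by $\det(S)^*\det(S)\in\K[t]$, and at each step to use a matrix zero produced by Proposition~\ref{prop:matrix_zeros} to cancel one conjugate pair of poles by multiplying on the left with a suitable unitary matrix from $\U_n(\cO)$. More precisely, write $S = \frac{1}{d}P$ where $d\in\K[t]$ is a common denominator chosen coprime to whatever it can be (and in particular a divisor of a power of the product of the relevant irreducible factors), and $P\in\Mat_n(\K[t])$; since $S^*S\in\Her_n(\K[t])$ is the target matrix $M$, we have $P^*P = d^*d\,M$. If $d$ is a unit in $\cO$ we are already done, so suppose $d$ has an irreducible factor; let $z\in\C$ be a root of it, so that $(d^*d)(z)=0$ and hence $(P^*P)(z) = (d^*d)(z)\,M(z) = 0$.

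The key step is now to apply Proposition~\ref{prop:matrix_zeros} to the polynomial matrix $P$ at the point $z$. In the real case $\K=\R$ one has to be slightly careful that $n$ may be odd; this is handled by first passing to $M\oplus\1_k$ with $k\in\{0,1\}$ chosen to make the size even, using Lemma~\ref{lem:one_cancellation} to transfer the conclusion back (and noting $\U$-matrices with an extra block $\1_k$ stay in the correct unitary group), so we may assume the hypotheses of Proposition~\ref{prop:matrix_zeros} are met. The proposition yields a normal constant matrix $A\in\Mat_n(\K)$ with eigenvalues among $\{z,z^*\}$, a factorization $P=(t\1_n-A)P'$ with $P'\in\Mat_n(\K[t])$, and the scalar identity $(t\1_n-A)^*(t\1_n-A)=(t-z)^*(t-z)\1_n$. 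The point is that $\frac{1}{(t-z)^*(t-z)}(t\1_n-A)^*$ is a unitary matrix: its product with its own adjoint is $\frac{1}{((t-z)^*(t-z))^2}(t\1_n-A)(t\1_n-A)^* = \1_n$ after using normality of $A$ to commute the factors and the scalar identity. Provided $(t-z)^*(t-z)$ — a real quadratic (or linear, if $z\in\R$) polynomial dividing $d^*d$ — is, together with its factors, the kind of denominator allowed in $\cO$, this unitary matrix $V:=\frac{1}{(t-z)^*(t-z)}(t\1_n-A)^*$ lies in $\U_n(\cO)$, and $VS = \frac{1}{(t-z)^*(t-z)\,d}(t\1_n-A)^*(t\1_n-A)P' = \frac{1}{d}P'$, i.e. we have reduced the denominator by one factor of $(t-z)^*(t-z)$ while staying in the orbit under $\U_n(\cO)$ and keeping $(VS)^*(VS)=M$ polynomial. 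Iterating until the denominator becomes a unit in $\cO$ completes the argument, and the resulting composite of the $V$'s is the desired $U\in\U_n(\cO)$. Translating into the language of orbit sets: the class $[S]\in\fac{M}{\cO}{n}$ equals the class of a polynomial matrix, which is precisely surjectivity of $\fac{M}{\K[t]}{n}\to\fac{M}{\cO}{n}$.

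The main obstacle I expect is bookkeeping around $\cO$ and the denominators: one must verify that at every stage the irreducible factors of $(t-z)^*(t-z)$ are actually invertible in $\cO$, which needs $z$ to be a zero of a factor that $\cO$ is "allowed to invert" — this is automatic when $\cO=\cO_M$ only if $z$ is not a zero of $\det(M)$, so one should first absorb the common denominator $d$ that is coprime to $\det(M)$ by definition of $\cO_M$, and more generally argue that any denominator appearing in an $S\in\Mat_n(\cO)$ with $S^*S$ polynomial must have its zeros disjoint from the zero set that defines $\cO$ — equivalently, that $d$ can be taken invertible in $\cO$ from the start, so that every $z$ we encounter gives $(t-z)^*(t-z)$ invertible in $\cO$. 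A secondary technical point is the odd-$n$ real case and checking the block-$\1_k$ reduction genuinely commutes with the maps in Lemma~\ref{lem:one_cancellation}; this is routine but must be stated. Once these are in place, the induction is short and the unitarity computation is a one-line consequence of normality of $A$ and the scalar norm identity from Proposition~\ref{prop:matrix_zeros}.
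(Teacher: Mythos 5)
There is a genuine error at the heart of your induction step: the matrix $V:=\frac{1}{(t-z)^*(t-z)}(t\1_n-A)^*$ is \emph{not} unitary. By normality of $A$ and the scalar identity from Proposition~\ref{prop:matrix_zeros},
\[
	V V^* \;=\; \frac{1}{\bigl((t-z)^*(t-z)\bigr)^2}\,(t\1_n-A)^*(t\1_n-A) \;=\; \frac{1}{(t-z)^*(t-z)}\,\1_n \;\neq\; \1_n,
\]
so your displayed equality is off by exactly one factor of $(t-z)^*(t-z)$. The correct normalization is $U=\frac{1}{(t-z)^*}(t\1_n-A)^*$, which is the choice made in the paper. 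This is not merely cosmetic: with the correct $U$ one gets $US=\frac{1}{d/(t-z)}P'$, so the common denominator of $S$ drops by one linear factor and the induction terminates; with your $V$ one only gets $VS=\frac{1}{d}P'$, and there is no reason the smallest common denominator of $VS$ is any smaller than that of $S$, so your induction does not obviously terminate.

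Fixing the scaling exposes a second gap over $\K=\R$. When $z\notin\R$, the corrected matrix $U=\frac{1}{t-\bar z}(t\1_n-A)^T$ has entries in $\C(t)$ but not in $\R(t)$, since $t-\bar z\notin\R[t]$; so it does not lie in $\O_n(\cO)\subseteq\Mat_n(\R(t))$. There is no single linear split that produces a real orthogonal matrix here, because $(t\1_n-A)^T(t\1_n-A)=(t-\bar z)(t-z)\1_n$ forces the putative normalization $\frac{1}{\sqrt{(t-\bar z)(t-z)}}$, which is not a rational function. The paper's proof deals with this by splitting off \emph{two} linear factors $T_0,T_1$ (using that $a^2\mid Q^TQ$ for $a:=(t-\bar z)(t-z)$), setting $T:=T_0T_1$ with $T^TT=a^2\1_n$, and taking $\frac{1}{a}T^T$ — a genuine real orthogonal matrix with denominator $a\in\R[t]$. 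Your treatment of the odd-$n$ case via $M\oplus\1_k$ and Lemma~\ref{lem:one_cancellation} is fine and parallels the paper, but the missing double split for real irreducible quadratic factors is an essential step, not bookkeeping; without it the argument fails for every $z$ with nonzero imaginary part.
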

\begin{proof}
	Note that for every fixed $n$, the two formulations of the theorem are in fact equivalent. For the moment we assume that $n$ is even if $\K=\R$, and keep the odd case for later. First we prove the following intermediate claim:
	
	If $a\in \K[t]$ and $Q\in\Mat_n(\K[t])$ such that $a^*a$ divides the entries of $Q^*Q$ in $\K[t]$, then there exist $U\in \U_n(\K[t]_{a^*})$ such that $UQ\in\Mat_n(\K[t])$ and $a$ divides the entries of $UQ$ in $\K[t]$. For this we may assume that $a$ is irreducible, otherwise we repeat the argument for each irreducible factor of $a$. So let $a$ be monic and irreducible such that $a^*a$ divides the entries of $Q^*Q$. We consider two cases.

	Case 1: $a$ is linear, say $a=t-z$. Since $(Q^*Q)(z)=0$ we can use Proposition~\ref{prop:matrix_zeros} to split off a linear factor $T:=(t-A)$ with $T^*T=a^*a\1_n$ from the left of $Q$, i.e.~$Q=TP$ for some $P\in \Mat_n(\K[t])$. Then $U:=\frac1{a^*} T^*$ is unitary and $UQ=\frac1{a^*}T^*TP=aP$ is divisible by $a$.

	Case 2: $a$ is quadratic, say $a=(t-z)^*(t-z)$ (in particular, $\K=\R$ and $n$ is even). Then $a^2|Q^TQ$. Just as in the first case, we can split off a linear factor $T_0$ from $Q$ with $T_0^TT_0=a\1_n$. So let $Q=T_0P$. Then $a|P^TP$ and we can split off another linear factor $T_1$ from $P$ with $T_1^TT_1=a\1_n$. Then $T:=T_0T_1$ divides $Q$ from the left and $T^TT=a^2\1_n$. In particular, $U:=\frac1aT^T$ is orthogonal and $UQ$ is divisible by $a$. This proves the intermediate claim.

	Now let $S\in\Mat_n(\cO)$ be a matrix of rational functions such that $S^*S$ has polynomial entries. Denote by $a\in\K[t]$ the smallest common denominator of the entries of $S$. Using that $\K[t]$ is a principal ideal domain, it is not hard to see, that $\frac1a\in\cO$. Indeed, given $\frac{c_1}a,\dots,\frac{c_r}a\in\cO$ such that $a$ is coprime to $c_1,\dots,c_r$, then there exists a linear combination $1=\al a+\sum_i\ga_ic_i$ with $\alpha,\ga_1,\dots,\ga_r \in \K[t]$ and therefore 
	\[
		\frac1a=\al+\sum_i\ga_i\frac{c_i}a\in\cO.
	\]
 Since $\cO$ is $*$-invariant, also $\frac1{a^*}\in\cO$. We set $Q:=aS\in\Mat_n(\K[t])$. Since $S^*S$ has polynomial entries, $Q^*Q=a^*aS^*S$ is divisible by $a^*a$. Using the claim, there exists $U\in\U_n(\K[t]_{a^*})\subseteq\U_n(\cO)$ such that $UQ\in\Mat_n(\K[t])$ and $a$ divides the entries of $UQ$ and hence $US=\frac1aUQ$ has polynomial entries, as claimed. This proves the case $\K=\C$ or $n$ even.

	For the remaining case let $\K=\R$ and $n$ be odd. We prove the second formulation of the theorem. So let $M\in\Sym_n(\K[t])$ be positive semidefinite. We look at the following commutative diagram consisting of the canonical maps.
	\begin{center}$
		\begin{tikzcd}
			\fac M{\R[t]}n\ar[r]\ar[d]	& \fac M{\cO}n\ar[d]\\
			\fac {M\oplus \langle 1\rangle}{\R[t]}{n+1}\ar[r]	& \fac {M\oplus \langle 1\rangle}{\cO}{n+1}
		\end{tikzcd}$
	\end{center}
	The left hand map and bottom map are surjective by Lemma~\ref{lem:one_cancellation} and the even case, respectively. Since the right hand map is clearly injective (see the proof of Lemma~\ref{lem:one_cancellation}), this gives the surjectivity of the top map, completing the proof.
\end{proof}

\begin{cor}\label{cor:polynomial_factorization}
	Let $M\in\Sym_n(\R[t])$ be positive semidefinite and non-degenerate. Then there exists $Q\in\Mat_n(\R[t])$ with $M=Q^TQ$ if and only if $\det(M)$ is a square in $\R[t]$.
\end{cor}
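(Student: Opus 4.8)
The plan is to combine the two facts already at our disposal: a characterization of square factorizations over the rational function field (Corollary~\ref{cor:rational_factorization}) and the pole-cancellation result (Theorem~\ref{thm:cassels_pfister_surjective}). The forward implication is immediate and carries no content: if $M = Q^TQ$ with $Q \in \Mat_n(\R[t])$, then $\det(M) = \det(Q)^2$ is a square in $\R[t]$.

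For the converse, suppose $\det(M)$ is a square in $\R[t]$, hence also a square in $\R(t)$. The matrix $M$, viewed in $\Sym_n(\R(t))$, is non-degenerate by assumption and is positive semidefinite wherever it is defined, simply because it has polynomial entries and is positive semidefinite on all of $\R$. Thus Corollary~\ref{cor:rational_factorization} applies and produces a \emph{rational} factorization $M = Q_0^TQ_0$ with $Q_0 \in \Mat_n(\R(t))$.

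It remains to upgrade this to a polynomial factorization, and here I would invoke Theorem~\ref{thm:cassels_pfister_surjective} with the choice $\cO = \R(t)$: the involution on $\R(t)$ is trivial, so $\R(t)$ is $*$-invariant, and it visibly contains $\R[t]$. Since $Q_0 \in \Mat_n(\cO)$ and $Q_0^TQ_0 = M$ has polynomial entries, the theorem yields an orthogonal matrix $U \in \O_n(\R(t))$ with $Q := UQ_0 \in \Mat_n(\R[t])$; then $Q^TQ = Q_0^T U^T U Q_0 = Q_0^TQ_0 = M$, which is what we want. Equivalently, one phrases this via the "in other words" form of Theorem~\ref{thm:cassels_pfister_surjective}: the canonical map $\fac{M}{\R[t]}{n} \to \fac{M}{\R(t)}{n}$ is surjective, and its target is nonempty by Corollary~\ref{cor:rational_factorization}, so $\fac{M}{\R[t]}{n} \neq \varnothing$.

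I do not expect a genuine obstacle here, since the substantive work — splitting off matrix zeros, cancelling the poles of the factors, and, in the real case, the parity reduction via $M \oplus \langle 1\rangle$ — has already been carried out in Theorem~\ref{thm:cassels_pfister_surjective}. The only points requiring a moment's care are the verification that $M$ is positive semidefinite "wherever defined" as an element of $\Sym_n(\R(t))$ (clear, as it is a polynomial matrix) and that $\R(t)$ is a legitimate instance of the ring $\cO$ in that theorem. One could instead run the argument with $\cO = \cO_M$, but that would force a preliminary step clearing the denominators of $Q_0$ against $\det(M)$ and is an unnecessary detour for this corollary.
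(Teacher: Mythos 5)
Your proof is correct and is precisely the combination of Corollary~\ref{cor:rational_factorization} and Theorem~\ref{thm:cassels_pfister_surjective} that the paper itself uses (with the same choice $\cO = \R(t)$); the paper states this in one line, and you have simply written out the routine verifications.
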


\begin{proof}
	Combine Corollary~\ref{cor:rational_factorization} with Theorem~\ref{thm:cassels_pfister_surjective}.
\end{proof}

\begin{cor}\label{cor:polynomial_factorization_nplus1}
	Let $M\in\Sym_n(\R[t])$ be positive semidefinite. Then $M$ admits a factorization $M=Q^TQ$ for some $Q\in\Mat_{(n+1)\times n}(\R[t])$.
\end{cor}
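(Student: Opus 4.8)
The plan is to reduce the non-degenerate case (which has already been handled) by adding a single variable, and then to dispose of the degenerate case by splitting off a common polynomial factor. First, suppose $M$ is non-degenerate. The key observation is that the determinant $d = \det(M) \in \R[t]$ is a nonnegative polynomial (since $M$ is positive semidefinite on $\R$), hence factors over $\R[t]$ as a product of an even power of $t$-free quadratics and a square; more simply, $d$ is a sum of two squares, say $d = f^2 + g^2$ with $f,g \in \R[t]$. Consider the matrix polynomial $M \oplus \langle 1 \rangle \in \Sym_{n+1}(\R[t])$. Its determinant is $d = f^2 + g^2$, which is generally not a square, so Corollary~\ref{cor:polynomial_factorization} does not directly apply to it. Instead, I would work over the rational function field: by Corollary~\ref{cor:witt_lgp}, the form $\langle d, 1\rangle$ over $\R(t)$ is congruent to $\langle 1, d \rangle$; combining with the diagonalization used in Corollary~\ref{cor:rational_factorization} one gets $M \oplus \langle 1\rangle \simeq_{\R(t)} \langle 1, \dots, 1, d, 1\rangle \simeq_{\R(t)} \langle 1, \dots, 1\rangle = \1_{n+1}$, because $d\cdot 1$ is a sum of two squares and the form $\langle d, 1\rangle$ is isometric to $\langle 1, 1\rangle$ over $\R(t)$ once $d$ is a sum of two squares (rescale). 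Hence $M \oplus \langle 1 \rangle$ admits a rational $(n+1)\times(n+1)$ factorization over $\R(t)$, and by Theorem~\ref{thm:cassels_pfister_surjective} it admits a polynomial one, $M \oplus \langle 1\rangle = P^T P$ with $P \in \Mat_{n+1}(\R[t])$.

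From such a $P$ I extract the desired $Q$. Writing $P = (P' \mid p)$ in terms of its first $n$ columns $P' \in \Mat_{(n+1)\times n}(\R[t])$ and last column $p \in \R[t]^{n+1}$, the block structure of $M \oplus \langle 1\rangle$ gives $P'^T P' = M$, $P'^T p = 0$, and $p^T p = 1$. The first of these identities is exactly a factorization $M = Q^T Q$ with $Q := P' \in \Mat_{(n+1)\times n}(\R[t])$, which is what we want. Note we do not even need the orthogonality or normalization of $p$; only the top-left block of the identity $P^T P = M \oplus \langle 1 \rangle$ is used.

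It remains to treat the degenerate case $\det(M) = 0$, or more carefully to make sure the reduction is valid; in fact the cleaner route is to avoid it. I would instead directly apply Theorem~\ref{thm:cassels_pfister_surjective} to $M \oplus \langle 1 \rangle$ whenever $M \oplus \langle 1 \rangle$ is positive semidefinite (which it always is) and non-degenerate wherever defined. If $\det(M) \equiv 0$, factor out: $M$ still has some rank $r < n$ generically, and one can split $M$ as $S^T (M_0 \oplus 0) S$ over $\R(t)$ with $M_0$ non-degenerate of size $r$; then $M_0 \oplus \langle 1\rangle$ factors over $\R[t]$ as above and padding with zero rows/columns yields a polynomial $(n+1)\times n$ factorization of the original $M$ after clearing denominators — but clearing denominators is exactly what Theorem~\ref{thm:cassels_pfister_surjective} accomplishes, applied once more. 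The main obstacle is precisely this bookkeeping with denominators and ranks in the degenerate case: one must ensure that after the rational manipulations the resulting matrix lies in $\Mat_n(\cO_{M'})$ for an appropriate non-degenerate $M'$ so that Theorem~\ref{thm:cassels_pfister_surjective} is applicable. I expect the non-degenerate case to be essentially immediate from the two cited corollaries, with the only real care needed being this reduction.
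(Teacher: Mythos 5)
Your choice of augmentation $M \oplus \langle 1 \rangle$ does not work: the step where you claim $\langle d, 1 \rangle \simeq_{\R(t)} \langle 1, 1 \rangle$ ``once $d$ is a sum of two squares (rescale)'' is false. Two binary diagonal forms over a field can be isometric only if their determinants agree modulo squares, so $\langle d, 1 \rangle \simeq \langle 1, 1 \rangle$ would force $d$ to be a square in $\R(t)$; being a sum of two squares is not enough (take $d = 1 + t^2$). Concretely, if $M \oplus \langle 1 \rangle = P^T P$ with $P \in \Mat_{n+1}(\R[t])$, then $(\det P)^2 = \det\bigl(M\oplus\langle 1\rangle\bigr) = d$, so no such $P$ can exist when $d$ is not a square. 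The non-degenerate case of your argument therefore collapses precisely in the cases where the corollary has content.

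The fix, which is what the paper does, is to augment by $\langle d \rangle$ rather than $\langle 1 \rangle$: the matrix $M \oplus \langle d \rangle$ has determinant $d^2$, a square, so Corollary~\ref{cor:polynomial_factorization} applies directly and produces $M \oplus \langle d \rangle = P^T P$ with $P \in \Mat_{n+1}(\R[t])$. Your extraction step is then fine: taking the first $n$ columns of $P$ gives $Q \in \Mat_{(n+1)\times n}(\R[t])$ with $Q^T Q = M$. For the degenerate case, the paper's reduction is also simpler than your sketch: since $\R[t]$ is a PID and $\R[t]^n/\ker M \cong \im M$ is torsion-free, hence free, $\ker M$ is a free direct summand of $\R[t]^n$; choosing a basis adapted to this decomposition yields a congruence $M \simeq_{\R[t]} M' \oplus 0$ over $\R[t]$ itself with $M'$ non-degenerate, so no further denominator-clearing or appeal to Theorem~\ref{thm:cassels_pfister_surjective} is needed for this step.
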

\begin{proof}
	First we reduce to the case that $M$ is non-degenerate, i.e.~has nonzero determinant. Since $\R[t]$ is a principal ideal domain, we can choose a basis of $\ker M$ and extend it to a basis of $\R[t]^n$. The according congruence transformation on $M$ results in a block matrix of the form $M'\oplus 0$, where $M'$ is non-degenerate and positive semidefinite. From a factorization of $M'$ we get one of $M$. Replacing $M$ by $M'$, we may assume that $d:=\det(M)$ is nonzero. Then by the previous Corollary~\ref{cor:polynomial_factorization}, $M\oplus d$ has a square factorization of size $n+1$. The first $n$ columns of the latter give the desired $(n+1)\times n$-factorization of $M$.
\end{proof}

One of the main aims of this paper is not only to prove existence, but to give a precise classification of all such factorizations up to unitary equivalence. For every non-degenerate positive semidefinite matrix $M\in\Her_n(\K[t])$, there exists only one square factorization up to unitary equivalence over $\K(t)$. Namely, let $M=Q^*Q=P^*P$, with $Q,P\in \Mat_n(\K(t))$. Then $Q$ and $P$ only differ by the unitary matrix $U=Q^{-1}P\in\U_n(\K(t))$. That is, over $\K(t)$ all square factorizations of $M$ are equivalent. However, the situation changes if we require the involved unitary matrices to have no poles wherever $M$ is singular, i.e.~to have entries in $\cO_M$. The next proposition shows that any two polynomial factorizations over $\K[t]$ that are equivalent over $\cO_M$, are already equivalent over $\K[t]$. Due to Remark~\ref{rem:degreesos}, they are even equivalent over $\K$.

\begin{prop}\label{prop:cassels_pfister_injective}
	Let $M\in\Her_n(\K[t])$ be positive semidefinite and non-degenerate. Let $\cO$ be a subring of $\cO_M$ containing $\K[t]$. Given $Q_1,Q_2\in\Mat_n(\K[t])$ and $U\in\U_n(\cO)$ such that $M=Q_1^*Q_1=Q_2^*Q_2$ and $UQ_1=Q_2$, then $U\in\U_n(\K[t])$. In particular, the canonical map
	\[
		\fac M{\K[t]}n\to\fac M{\cO}n
	\]
	is injective.
\end{prop}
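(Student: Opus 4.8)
The plan is to show that $U = Q_2 Q_1^{-1}$ has polynomial entries by controlling its denominators. Since $Q_1^* Q_1 = M$ is non-degenerate, $Q_1$ is invertible over $\K(t)$, and $U = Q_2 Q_1^{-1} \in \Mat_n(\cO) \subseteq \Mat_n(\cO_M)$ by hypothesis; also $U^* U = (Q_1^{-1})^* Q_2^* Q_2 Q_1^{-1} = (Q_1^{-1})^* M Q_1^{-1} = \1_n$, so $U$ is already known to be unitary over $\cO_M$. Thus the entries of $U$ are rational functions whose denominators are coprime to $d := \det M$. The goal is to rule out any denominators at all, i.e.\ to show the entries actually lie in $\K[t]$.

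First I would observe that $U Q_1 = Q_2$ gives, reading columns, $U \cdot (\text{columns of } Q_1) = (\text{columns of } Q_2)$, where all columns are polynomial vectors. Let $a \in \K[t]$ be the least common denominator of the entries of $U$, so $a U \in \Mat_n(\K[t])$ and $a$ is coprime to $d$. The key point is that $a$ must then divide all entries of $(aU)Q_1 = a Q_2$, hence (since $Q_2$ has polynomial entries) we need $a \mid a Q_2$ trivially — that is not yet a contradiction, so the real leverage must come from unitarity. I would instead argue via the relation $a^2 \1_n = (aU)^*(aU)$: the matrix $aU$ has polynomial entries and $(aU)^*(aU) = a^* a \, \1_n$, so for any prime $p \mid a$ in $\K[t]$, reducing mod $p$ (if $p \neq p^*$, handle the factor $p^* p$ of $a^* a$) shows that the columns of $\overline{aU}$ lie in the radical, forcing $p \mid aU$ entrywise, which contradicts minimality of $a$ unless $a$ is a unit. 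More carefully: write $a = \prod p_i^{e_i}$; it suffices to treat one irreducible $p$ with $p \mid a$. If $p = p^*$ (or after passing to $p^* p$ when $p \neq p^*$), the identity $(aU)^*(aU) \equiv 0$ modulo the relevant prime power, combined with the argument from Remark~\ref{rem:degreesos} / positivity of leading coefficients applied over the residue field — or more robustly, a direct divisibility chase using that $\K[t]/(p)$ is a field with an induced (possibly trivial) involution and that a hermitian form $v \mapsto v^* v$ over $\C$ or $\R$ is anisotropic — shows each column of $aU$ is divisible by $p$. This contradicts the minimality of $a$, so $a$ is constant and $U \in \Mat_n(\K[t])$; being unitary with polynomial entries, Remark~\ref{rem:degreesos} gives $U \in \U_n(\K)$.

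An alternative, perhaps cleaner route: apply Theorem~\ref{thm:cassels_pfister_surjective} with the ring $\K[t]_{a^*}$ (or directly the intermediate claim in its proof) to the matrix $aU \in \Mat_n(\K[t])$, whose product $(aU)^*(aU) = a^* a\, \1_n$ is divisible by $a^* a$. The claim there produces a unitary $V \in \U_n(\K[t]_{a^*})$ with $V(aU)$ polynomial and divisible by $a$; then $VU$ is polynomial and unitary, hence constant, and one tracks back that this forces $a$ to be a unit. I would present whichever of these is shortest; the honest content is the same.

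The main obstacle I anticipate is the bookkeeping around the involution when $\K = \C$ and the irreducible $p$ dividing $a$ is not self-conjugate: then $a^* a$ carries $p$ and $p^*$ to possibly different powers, and one must be careful to work prime-by-prime, splitting $a$ according to its irreducible factors and their conjugates, rather than treating $a^* a$ as a single object. A second, milder subtlety is making precise the "reduce mod $p$ and use anisotropy" step: one wants that if $N \in \Mat_n(\K[t])$ satisfies $N^* N \equiv 0 \pmod{p}$ with $p$ not dividing $\gcd$ of entries of $N$, then something fails — this needs the residue field to admit no isotropic hermitian vectors, which holds because the residue field at a real (resp.\ complex-conjugate) place is $\R$ or $\C$ and the form $\sum x_i^* x_i$ is positive definite there. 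Since the paper already leans on exactly this positivity phenomenon in Remark~\ref{rem:degreesos}, invoking it at the residue level should be routine.
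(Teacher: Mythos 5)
The paper's own proof is a three-line observation that you have all the pieces for but do not assemble. You compute $U = Q_2 Q_1^{-1}$ and note that its entries lie in $\cO_M$, so their denominators (in lowest terms) are coprime to $d := \det M$. But $Q_1^{-1} = \tfrac{1}{\det Q_1}\mathrm{adj}(Q_1)$, so any denominator of $U$ divides $\det Q_1$, and $(\det Q_1)^*(\det Q_1) = d$ forces $\det Q_1 \mid d$. A denominator that simultaneously divides $d$ and is coprime to $d$ is a unit, so $U$ is polynomial and hence (by Remark~\ref{rem:degreesos}) constant. No reduction mod primes, no least common denominators, no appeal to Theorem~\ref{thm:cassels_pfister_surjective}.

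Your longer route has a genuine gap at the "reduce mod $p$ and use anisotropy" step. With $N := aU$ and $N^*N = a^*a\1_n$, you want $p \mid a$ to force $p \mid N$ entrywise. But the $*$-operation conjugates coefficients, so evaluating at a zero $z$ of $p$ yields $\overline{N(z^*)}^{\,T}\,N(z) = 0$: this pairs the columns of $N(z)$ against those of $N(z^*)$, not against themselves, so positive definiteness of the hermitian form over $\C$ tells you nothing about $N(z)$ alone. For $\K = \R$ and $p$ quadratic irreducible the situation is worse than you acknowledge: $\R[t]/(p)\cong\C$ carries the \emph{trivial} induced involution, so the residual form is $\sum x_i^2$, which is isotropic over $\C$ — it is not "positive definite there," as you assert. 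This is precisely the phenomenon the paper handles in Proposition~\ref{prop:matrix_zeros}, where a normal $A$ with eigenvalues $z$ and $z^*$ (not just $z$) must be constructed, together with the even-dimension hypothesis and Lemma~\ref{lem:conjugate_complement}. Your alternative route via the intermediate claim of Theorem~\ref{thm:cassels_pfister_surjective} gives $VU$ constant and hence $U = V^*(VU) \in \Mat_n(\K[t]_a)$, but since $a$ is coprime to $d$ we already have $\K[t]_a \subseteq \cO_M$, so this is no new constraint; the claim "one tracks back that this forces $a$ to be a unit" is not justified.
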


\begin{proof}
	By assumption, the entries of $U=Q_2Q_1^{-1}$ lie in $\cO\subseteq\cO_M$. By definition of $\cO_M$ this means that potential poles of $U$ can only occur wherever $\det(M)$ does not vanish. But in these points $M$ and thus $Q_1$ are invertible. Therefore, $U$ is defined everywhere and hence polynomial.
\end{proof}

We have now proved the Cassels-Pfister Theorem for matrices over $\K[t]$ that we need for our purposes.
\begin{thm}\label{thm:cassels_pfister}
	Let $M\in\Her_n(\K[t])$ be positive semidefinite and non-degenerate and let $\cO=\cO_M$. Then the canonical map
	\[
		\fac M{\K[t]}n\to\fac M{\cO}n
	\]
	is a bijection.
\end{thm}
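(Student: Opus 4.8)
The plan is to recognize that this theorem is simply the conjunction of the two results already established in this section: \textbf{surjectivity} of the canonical map is exactly Theorem~\ref{thm:cassels_pfister_surjective}, and \textbf{injectivity} is exactly Proposition~\ref{prop:cassels_pfister_injective}. So the real content of the proof is just to check that the ring $\cO = \cO_M$ satisfies the hypotheses of both statements, and then to quote them.

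For surjectivity I would apply Theorem~\ref{thm:cassels_pfister_surjective} with $\cO = \cO_M$. This needs $\cO_M$ to be a $*$-invariant subring of $\K(t)$ containing $\K[t]$: the inclusions $\K[t] \subseteq \cO_M \subseteq \Quot(\K[t]) = \K(t)$ and the subring property are immediate from the definition of $\cO_M$, and $*$-invariance holds because $M$ is hermitian, so $\det(M)$ is fixed by the involution and hence the zero locus $Z(\det M)$, and with it $\cO_M$, is stable under the involution. Thus any class in $\fac{M}{\cO}{n}$ has a representative $S \in \Mat_n(\cO_M)$ with $S^*S = M \in \Mat_n(\K[t])$, and the theorem produces $U \in \U_n(\cO_M)$ with $US \in \Mat_n(\K[t])$, i.e.\ a polynomial representative of the same class. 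For injectivity I would apply Proposition~\ref{prop:cassels_pfister_injective}, again with $\cO = \cO_M$, which is trivially a subring of $\cO_M$ containing $\K[t]$: if $Q_1, Q_2 \in \Mat_n(\K[t])$ with $Q_1^*Q_1 = Q_2^*Q_2 = M$ satisfy $UQ_1 = Q_2$ for some $U \in \U_n(\cO_M)$, the proposition forces $U \in \U_n(\K[t])$ (indeed $U \in \U_n(\K)$ by Remark~\ref{rem:degreesos}), so $[Q_1] = [Q_2]$ already in $\fac{M}{\K[t]}{n}$.

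Since the two technical inputs are already in hand, I do not expect any genuine obstacle here; the only point demanding a little care is the hypothesis-matching, in particular confirming that membership of a matrix entry in $\cO_M$ literally encodes ``no pole at any zero of $\det(M)$'' — this is what makes the non-degeneracy argument in Proposition~\ref{prop:cassels_pfister_injective} go through — and that $\cO_M$ is $*$-invariant so that Theorem~\ref{thm:cassels_pfister_surjective} applies. Both were set up in the preliminaries, so the proof is essentially a one-line combination.
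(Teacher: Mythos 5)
Your proof is correct and follows exactly the paper's own approach: the paper's proof of this theorem is the one-liner ``Combine Theorem~\ref{thm:cassels_pfister_surjective} and Proposition~\ref{prop:cassels_pfister_injective}.'' Your careful verification that $\cO_M$ is a $*$-invariant subring of $\K(t)$ containing $\K[t]$ is a reasonable addition of detail, not a deviation.
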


\begin{proof}
	Combine Theorem~\ref{thm:cassels_pfister_surjective} and Proposition~\ref{prop:cassels_pfister_injective}.
\end{proof}

\section{The Smith Normal Form of Positive Semidefinite Matrices}\label{sec:snf}

We will see that Theorem~\ref{thm:cassels_pfister} gives us a much stronger result than mere existence of polynomial factorizations as in Corollaries~\ref{cor:polynomial_factorization} and \ref{cor:polynomial_factorization_nplus1}. Namely it allows us to work over the semi-local ring $\cO_M$ instead of $\K[t]$ in order to count the number of square factorizations. The advantage of working over $\cO_M$ lies in the main result of this section, Theorem~\ref{thm:local_snf}, which states that a positive semidefinite matrix $M\in\Sym_n(\K[t])$ is congruent to its Smith normal form if we allow congruence transformations over $\cO_M$.

\medskip
Recall that the \emph{Smith normal form} of a matrix $M\in\Mat_n(R)$ over a principal ideal domain $R$ is a diagonal matrix $D=\langle a_1,\dots,a_k,0,\dots,0\rangle$ ($k$ the rank of $M$) where $a_1,\dots,a_k\in R$ with $a_i|a_{i+1}$ ($i=1,\dots,k-1$) such that there exist invertible matrices $S,T\in\Mat_n(R)$ with $SMT=D$. The $a_i$ are called the \emph{invariant factors} of $M$ and are uniquely determined up to units in $R$. See \citep[Chapter IV]{Macduffee33} for background.
Moreover, if $R=\K[t]$, then requiring the invariant factors to be monic makes them unique and in that case we refer to $\langle a_1,\dots,a_k,0,\dots,0\rangle$ as the \emph{monic Smith normal form} of $M$.

In general the transformation to obtain the Smith normal form of a symmetric matrix $M$ cannot be chosen to be a congruence transformation, i.e.~$S=T^*$ for the above transformation matrices. However, if $R=\K[t]$ and $M$ is positive semidefinite, then this is possible locally ``around'' the roots of $\det(M)$. For a precise statement see Theorem~\ref{thm:local_snf}.

\subsection{Diagonalization over semi-local principal ideal domains.}
First we show that the Gram-Schmidt method for orthogonalization leads to a Smith normal form of a given matrix over many semi-local principal ideal domains.

\begin{lem}\label{lem:snf}
	Let $\cO$ be a semi-local principal ideal domain containing $\Q$. Moreover, let $A\in\Sym_n(\cO)$. Then $A$ is congruent to its Smith normal form. More precisely, there exist $a_1,\dots,a_k\in \cO$, where $k$ is the rank of $A$, with $a_1|a_2|\dots|a_k$ and $A\simeq_{\cO} \langle a_1,\dots,a_k,0,\dots,0\rangle$.
\end{lem}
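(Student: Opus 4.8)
The plan is to run Gram--Schmidt orthogonalization carefully, splitting off one diagonal entry at a time while keeping track of divisibility, and to use the semi-local principal ideal domain structure to make the right choices at each step. First I would reduce to the non-degenerate case by choosing a basis of the radical of the bilinear form attached to $A$ (equivalently $\ker A$, since $\cO$ is a domain and $A$ is symmetric) and extending it to a basis of $\cO^n$; since $\cO$ is a PID this extension exists, and the resulting congruence transformation puts $A$ into the block form $A'\oplus 0$ with $A'\in\Sym_k(\cO)$ non-degenerate. So from now on assume $A$ non-degenerate of full rank $n$.

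The key inductive step is: find a vector $v\in\cO^n$ whose ``value'' $v^TAv=:a_1$ generates the ideal $\mathfrak c_1$ generated by all entries $v'^TAv'$ and $v'^TAw'$, i.e.\ the first invariant factor ideal of $A$ (the ideal generated by all $1\times 1$ minors of $A$); since $\cO$ is a PID this ideal is principal, and the point is that it is generated by a \emph{diagonal} value $v^TAv$ rather than merely by some off-diagonal entry. This is where $\Q\subseteq\cO$ and the semi-local hypothesis enter: over $2$ is invertible, so for an off-diagonal entry $a_{ij}=e_i^TAe_j$ we can write it via the polarization identity $2a_{ij}=(e_i+e_j)^TA(e_i+e_j)-e_i^TAe_i-e_j^TAe_j$, hence the ideal generated by the diagonal values already equals $\mathfrak c_1$; and because $\cO$ is semi-local, we can find a single $v$ realizing a generator of $\mathfrak c_1$ by a prime-avoidance / CRT argument across the finitely many maximal ideals (choosing $v$ appropriately modulo each maximal ideal and patching). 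Once $v$ is chosen with $a_1:=v^TAv$ generating $\mathfrak c_1$, the vector $v$ can be completed to a basis (it is part of a basis since $a_1$ is a unit times a generator of the ideal of entries, so $v$ is unimodular up to that scalar — more precisely $a_1^{-1}Av$ has entries in $\cO$, making the ``Gram--Schmidt'' projection $w\mapsto w-\frac{v^TAw}{a_1}v$ well-defined over $\cO$). Projecting the rest of a basis orthogonally to $v$ using this map gives $A\simeq_\cO \langle a_1\rangle \oplus A_1$ with $A_1\in\Sym_{n-1}(\cO)$, and crucially $a_1$ divides every entry of $A_1$ because $a_1$ generated the full entry-ideal $\mathfrak c_1\supseteq$ (entries of $A_1$). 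Iterating, we get $A\simeq_\cO\langle a_1,\dots,a_n\rangle$ with $a_1\mid a_2\mid\cdots\mid a_n$, and by uniqueness of invariant factors this diagonal matrix is the Smith normal form.

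The main obstacle I anticipate is the selection of the vector $v$ whose diagonal value generates $\mathfrak c_1$: one has to be careful that prime avoidance works for representing a generator of an ideal by a \emph{quadratic} value $v^TAv$ rather than a linear functional, since naive prime avoidance is a statement about linear combinations/ideals, not about values of a quadratic form. The standard fix — and presumably what the paper does, given the promised appendix on ``applications of the prime avoidance lemma to hermitian forms'' — is to localize at each maximal ideal $\mathfrak m_i$ of $\cO$ (where $\cO_{\mathfrak m_i}$ is a DVR, so $A$ diagonalizes and $\mathfrak c_1$ is generated by the smallest-valuation diagonal entry after a local change of basis), to pick a local unimodular $v^{(i)}$ there realizing that valuation, and then to glue the $v^{(i)}$ into a global $v\in\cO^n$ via the Chinese Remainder Theorem applied to the finitely many maximal ideals (plus a high enough power to control valuations). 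One must check the glued $v$ still has $v^TAv$ of the correct valuation at every $\mathfrak m_i$ simultaneously, which is where the ``control of denominators'' flavor of the argument lives; everything else is routine linear algebra over $\cO$.
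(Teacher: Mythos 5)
Your plan matches the paper's proof in all essentials: locate $v\in\cO^n$ whose quadratic value $v^TAv$ generates the first invariant-factor ideal via a prime-avoidance argument for quadratic forms, split off $\cO v$ by the Gram--Schmidt projection $w\mapsto w-\tfrac{v^TAw}{a_1}v$ (well-defined over $\cO$ because $v^TAw$ always lies in the ideal generated by all entries of $A$, hence in $(a_1)$), and induct. The paper streamlines the bookkeeping by first pulling out the gcd $a_1$ of the entries, writing $A=a_1B$, and then seeking $v$ with $q_B(v,v)$ a \emph{unit} rather than tracking the invariant-factor ideal $\mathfrak{c}_1$ directly; with this normalization the divisibility $a_1\mid(\text{later invariant factors})$ is automatic, but the two formulations are equivalent. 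Where you and the paper genuinely diverge is in the implementation of the quadratic prime-avoidance step itself: you anticipate a localize-and-glue argument, diagonalizing over each DVR $\cO_{\m_i}$, picking a good local unimodular vector, and patching by CRT modulo high powers of the $\m_i$. The paper instead proves a general \emph{polynomial} prime avoidance lemma (Lemma~\ref{lem:prime_avoid_polynomial}): if the residue fields $\cO/\m_i$ are infinite (guaranteed here since $\Q\subseteq\cO$) and $f\in\cO[x_1,\dots,x_n]$ lies in no $\m_i\cO[x]$, then some $a\in\cO^n$ has $f(a)\notin\bigcup_i\m_i$. Applying this to $f=x^TBx$, which avoids each $\m_i\cO[x]$ by polarization since $2\in\cO^\times$ (Lemma~\ref{lem:prime_avoid_quadratic}), produces the desired $v$ in one shot without any gluing or valuation bookkeeping. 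Both implementations are viable; the polynomial lemma is slicker and, importantly for the paper, extends uniformly to the hermitian case (Lemma~\ref{lem:prime_avoid_hermitian}, used in Remark~\ref{rem:snf_hermitian}), whereas the localize-and-glue route would have to be reworked there.
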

\begin{proof}
	We proceed by induction on $n$. If $n=0$ there is nothing to prove. So let $n>0$. Then either $A=0$ or the entries of $A$ have a greatest common divisor, denoted by $a_1$. So we have $A=a_1B$ for some $B\in\Sym_n(\cO)$. For each of the finitely many maximal ideals of $\cO$ there is at least one entry of $B$ not contained in it. Denote $q_B\colon (v,w)\mapsto v^TBw$ the bilinear form defined by $B$. By the prime avoidance Lemma~\ref{lem:prime_avoid_quadratic} (Appendix) for quadratic forms, there exists $v\in \cO^n$ such that $q_B(v,v)$ is not contained in any of the maximal ideals and is hence a unit in $\cO$. Since $v$ represents a unit, the submodule $\cO v$ of $\cO^n$ has an orthogonal complement, as can be shown using the Gram-Schmidt orthogonalization method. Restricting the bilinear form $q_B$ to the orthogonal complement and applying the induction hypothesis to any representing matrix, we get the diagonalization as claimed.
\end{proof}
\begin{rem}\label{rem:snf_hermitian}
	Under one additional assumption, the same proof also works for a hermitian matrix $A$ over a semi-local principal ideal domain $\cO\supset\Q$ with involution, using Lemma~\ref{lem:prime_avoid_hermitian}. In the induction step, we need to be able to choose the greatest common divisor $a_1$ of the entries of $A$ to be hermitian, i.e.~$*$-invariant. This is possible, if (and only if) every $*$-invariant ideal (in this case, the ideal generated by the entries of $A$) has a $*$-invariant generator. If $\cO$ is a subring of $\C(t)$ containing $\C[t]$, then this is true, since every ideal is generated by a monic polynomial, which must have real coefficients, if the ideal is $*$-invariant.

	In fact, this condition just means that $\cO$ is unramified over its subring of $*$-invariant elements. To illustrate that this assumption is necessary, we equip $\R[t]$ with the $\R$-linear involution given by $t\mapsto -t$. The ring of $*$-invariant elements is $\R[t^2]$. Clearly, the $*$-invariant ideal $(t)$ has no $*$-invariant representative and it is obviously not possible to diagonalize the hermitian matrix $\begin{pmatrix}0&t\\-t&0\end{pmatrix}$, even over the localization $\cO=\R[t]_{(t)}$.
\end{rem}

\subsection{Avoiding Denominators}
The most technical step in the proof of Theorem~\ref{thm:local_snf} is to keep track of denominators in transforming quadratic forms. Using induction, similarly as in Corollary~\ref{cor:rational_factorization}, we reduce to the case of two dimensional forms and elements represented by them. As is common in quadratic forms theory, it can be quite useful to consider quadratic forms of the form $\langle 1,-c\rangle$. The essential advantage that we are going to exploit is the additional multiplicative structure that we gain by viewing these as norm forms of quadratic ring extensions.

We first give a variant of a standard exercise in number theory about the ring of integers in quadratic number fields.
\begin{lem}\label{lem:quadratic_intergral_closure}
	Let $A$ be a principal ideal domain with field of fractions $K$ and let $c\in A$ be square free. If $2\in A^{\times}$, then $A[\sqrt{c}]$ is the integral closure of $A$ in $K[\sqrt{c}]$.
\end{lem}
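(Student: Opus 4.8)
The plan is to mimic the classical argument identifying the ring of integers of a quadratic number field, adapting it to an arbitrary PID $A$ with $2$ invertible. Write $B := A[\sqrt{c}]$, which sits inside $L := K[\sqrt c]$. Since $c$ is square-free and not a square in $K$ (the statement is vacuous otherwise, as then $\sqrt c\in K$ and $B=A$), $L/K$ is a quadratic extension with the nontrivial automorphism $\sigma$ sending $\sqrt c$ to $-\sqrt c$. I would use the norm $N = N_{L/K}$ and trace $\Tr = \Tr_{L/K}$: for $\xi = u + v\sqrt c$ with $u,v\in K$ we have $\Tr(\xi) = 2u$ and $N(\xi) = u^2 - cv^2$. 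First I would record that $B$ is integral over $A$ (clear, since $\sqrt c$ satisfies $X^2 - c$), so $B$ is contained in the integral closure $\widetilde{A}$ of $A$ in $L$; the content of the lemma is the reverse inclusion $\widetilde A \subseteq B$.

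So let $\xi = u + v\sqrt c \in \widetilde A$ with $u,v \in K$. An element of $L$ is integral over $A$ iff its minimal polynomial over $K$ has coefficients in $A$ (here $A$ is integrally closed, being a PID); hence $\Tr(\xi) = 2u \in A$ and $N(\xi) = u^2 - cv^2 \in A$. Since $2\in A^\times$, from $2u\in A$ we get $u\in A$. It remains to show $v\in A$. From $u^2 - cv^2\in A$ and $u\in A$ we deduce $cv^2\in A$. Now I would invoke that $A$ is a PID and $c$ is square-free to conclude $v\in A$: write $v = p/q$ in lowest terms (meaning $\gcd(p,q)$ is a unit), so $cp^2/q^2\in A$, i.e.\ $q^2 \mid cp^2$; since $\gcd(p,q)$ is a unit, $q^2\mid c$, and square-freeness of $c$ forces $q$ to be a unit, whence $v\in A$. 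Therefore $\xi = u + v\sqrt c\in A[\sqrt c] = B$, giving $\widetilde A\subseteq B$ and hence equality.

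The only point needing a little care — and the step I'd flag as the main (though still elementary) obstacle — is the divisibility argument in a general PID: "lowest terms" and the unique-factorization-style reasoning "$q^2\mid c$ and $c$ square-free imply $q$ a unit." This is fine because a PID is a UFD, so square-free means no prime divides $c$ to multiplicity $\geq 2$; if some prime $\pi$ divided $q$ then $\pi^2\mid q^2\mid c$, contradiction. I would also double-check the edge case where $c$ is a unit or $c=0$: if $c\in A^\times$ or more generally $\sqrt c\in K$ already, the claim is trivial, and for $c$ a nonzero non-square the argument above is uniform, so no separate cases are really needed beyond a one-line remark. The hypothesis $2\in A^\times$ is used exactly once, to pass from $2u\in A$ to $u\in A$; without it the statement genuinely fails (e.g.\ $A=\Z$, $c\equiv 1\pmod 4$), which is worth keeping in mind but need not be discussed in the proof.
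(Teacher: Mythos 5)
Your argument is correct and essentially the same as the paper's: both show $u\in A$ via $2u = \xi+\xi^*$ (the trace) and $2\in A^\times$, then deduce $cv^2\in A$ and conclude $v\in A$ from square-freeness of $c$. The paper phrases the second half by observing $b\sqrt c$ is integral rather than invoking the norm explicitly, but this is a cosmetic difference, and your more detailed "lowest terms" justification of the final step is a fair expansion of the paper's one-line "Since $c$ is square free, also $b\in A$."
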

\begin{proof}
	Clearly, every element of $A[\sqrt{c}]$ is integral over $K$. Now let $a,b\in K$ and set $x:=a+b\sqrt{c}$. Suppose $x$ is integral over $A$. Then $x^*=a-b\sqrt{c}$ is integral, too. Therefore, $2a=x^*+x\in A$ and thus $a\in A$. Now also $b\sqrt{c}$ is integral. In particular, $b^2c\in A$. Since $c$ is square free, also $b\in A$.
\end{proof}

\begin{lem}\label{lem:ufd_hermitian_square_denominators}
	Let $B$ be a factorial ring with involution. Then for every $a\in L:=\Quot(B)$ with $a^*a\in B$ there exists $b\in B$ such that $b^*b=a^*a$.
\end{lem}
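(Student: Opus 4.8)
The plan is to reduce the assertion to a single divisibility in the factorial ring $B$ and then verify that divisibility one prime at a time by comparing valuations, keeping in mind that the involution permutes the associate classes of prime elements. First I would write $a=p/q$ with $p,q\in B$ and $\gcd(p,q)=1$, which is possible since $B$ is factorial. Clearing denominators, the hypothesis $a^{*}a\in B$ translates into the divisibility $q^{*}q\mid p^{*}p$ in $B$.

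The key step is the claim that in fact $q\mid p^{*}$. To see this, fix a prime element $\pi$ dividing $q$ and compare $\pi$-adic valuations on the two sides of $q^{*}q\mid p^{*}p$. The basic identity $v_{\pi}(x^{*})=v_{\pi^{*}}(x)$ holds because $x\mapsto x^{*}$ is a ring automorphism with $x^{**}=x$, hence it sends the $\pi$-part of $x$ to the $\pi^{*}$-part of $x^{*}$. Combining this with $\gcd(p,q)=1$ (so that $v_{\pi}(p)=0$) gives
\[
	v_{\pi^{*}}(q)+v_{\pi}(q)=v_{\pi}(q^{*}q)\le v_{\pi}(p^{*}p)=v_{\pi^{*}}(p)=v_{\pi}(p^{*}),
\]
and since $v_{\pi^{*}}(q)\ge 0$ we conclude $v_{\pi}(q)\le v_{\pi}(p^{*})$. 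As this holds for every prime $\pi$ (trivially when $\pi\nmid q$), we obtain $q\mid p^{*}$ in $B$.

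Finally I would set $b:=p^{*}/q\in B$. Then $b^{*}=p/q^{*}$, so $b^{*}b=pp^{*}/(q^{*}q)$, which by commutativity of $B$ equals $p^{*}p/(q^{*}q)=a^{*}a$, as desired.

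The only points requiring any care are the valuation identity involving the involution (that $*$ carries the $\pi$-part to the $\pi^{*}$-part) and the elementary observation that $q^{*}q\mid p^{*}p$ together with $\gcd(p,q)=1$ forces the stronger statement $q\mid p^{*}$; past this, the argument is completely formal and works verbatim for an arbitrary factorial ring with trivial involution as well. I do not anticipate any serious obstacle.
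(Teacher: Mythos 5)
Your argument is correct and follows the same route as the paper's own proof: write $a=p/q$ in lowest terms, use unique factorization to upgrade $q^{*}q\mid p^{*}p$ to $q\mid p^{*}$, and set $b=p^{*}/q$ (the paper equivalently takes $b=p/q^{*}$). You merely spell out the valuation identity $v_{\pi}(x^{*})=v_{\pi^{*}}(x)$ that the paper leaves implicit in its one-line coprimality step.
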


\begin{proof}
	Let $a=\frac{c}d$ with $c,d\in B$ coprime. Then $d$ divides $c^*c$ since $a^*a\in B$. But since $c$ and $d$ are coprime, already $d$ divides $c^*$. In other words $d^*$ divides $c$, i.e.~$a=\frac{bd^*}d$ for some $b\in B$. Clearly $b^*b=a^*a$.
\end{proof}

The following somewhat technical lemma is used to avoid denominators in transformation in the aforementioned two-dimensional forms.
Recall that we write $Z_B(e)$ for the set of all prime ideals of $B$ containing $e$.
\begin{lem}\label{lem:norm_denominators}
	Let $A$ be a semi-local principal ideal domain with field of fractions $K$ and $2\in A^{\times}$. 
	Moreover, let $c,e\in A$ such that $c$ is square free and denote $B:=A[\sqrt{c}]$. Suppose that for all the zeros $\q\in Z_B(e)$ of $e$, the residue field $k(\q)=B/\q$ is quadratically closed\footnote{i.e.~every element is a square.}. Then for every $\ga\in L:= K[\sqrt{c}]$ such that $N_{L|K}(\ga)\in A^{\times}$, there exists $\al \in A[e\sqrt{c}]$ such that $N_{L|K}(\al)=N_{L|K}(\ga)$. 
\end{lem}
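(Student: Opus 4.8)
The plan is to reduce the statement, with the help of Lemmas~\ref{lem:quadratic_intergral_closure} and~\ref{lem:ufd_hermitian_square_denominators}, to a local congruence problem at the finitely many primes of $A$ dividing $e$, and then to produce a global solution by multiplying $\ga$ by a norm-one unit of the special shape $w^*/w$. As a first step I would remove the denominators of $\ga$ itself. By Lemma~\ref{lem:quadratic_intergral_closure}, $B:=A[\sqrt c]$ is the integral closure of $A$ in $L$; since $A$ is a semi-local principal ideal domain and $L|K$ is separable ($2\in A^\times$), the ring $B$ is a semi-local Dedekind domain, hence a principal ideal domain, and in particular factorial. Because $N_{L|K}(\ga)=\ga\ga^*\in A^\times\subseteq B$, Lemma~\ref{lem:ufd_hermitian_square_denominators} supplies $\be\in B$ with $\be^*\be=\ga^*\ga=N_{L|K}(\ga)$, so after replacing $\ga$ by $\be$ I may assume $\ga=g+h\sqrt c$ with $g,h\in A$ and $N:=N_{L|K}(\ga)=g^2-ch^2\in A^\times$. (If $L\cong K\times K$ then $B\cong A\times A$ is not a domain, but there this reduction is trivial, and the remaining argument goes through unchanged.) Note that $\ga$ is then a unit of $B$, with inverse $\ga^*/N$.

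The key reformulation is that it suffices to find $p,q\in A$ such that $w:=p+q\sqrt c$ is a unit of $B$ (equivalently $p^2-cq^2\in A^\times$) and
\[
  e\mid (p^2+cq^2)h-2pqg\quad\text{in }A .
\]
Indeed, $u:=w^*/w$ then satisfies $N_{L|K}(u)=1$, and a direct computation shows that the $\sqrt c$-coordinate of $u\ga\in B=A\oplus A\sqrt c$ equals $\bigl((p^2+cq^2)h-2pqg\bigr)/(p^2-cq^2)$; by the displayed divisibility and since $p^2-cq^2$ is a unit, $\al:=u\ga$ lies in $A[e\sqrt c]$, while $N_{L|K}(\al)=N_{L|K}(u)\,N_{L|K}(\ga)=N_{L|K}(\ga)$, as required.

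To construct such $p,q$ I would work prime by prime. Put $F(p,q):=(p^2+cq^2)h-2pqg$; since $A$ is semi-local it has only finitely many maximal ideals. Fix a maximal ideal $\p$ with $e\in\p$ and write $k:=A/\p$. The first point is that $\bar N$ is a \emph{square} in $k$: if $c\notin\p$, then $B\otimes_Ak$ is a quadratic étale $k$-algebra all of whose units are squares — which is precisely the hypothesis that the residue fields $B/\q$ over $\p$ are quadratically closed — so the image of the unit $\ga$ is a square in $B\otimes_Ak$ and hence its norm $\bar N$ is a square in $k$; if $c\in\p$, then $\bar N\equiv\bar g^2\pmod\p$ with $\bar g\in k^\times$ (because $N\in A^\times$), so $\bar N$ is again a square. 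Consequently $F$ has a zero $(p_0,q_0)\in k^2$ with $p_0^2-\bar cq_0^2\neq0$: take $(p_0,q_0)=(1,0)$ if $\bar h=0$, and $(p_0,q_0)=(\bar g+\ep\sqrt{\bar N},\bar h)$ with $\ep\in\{1,-1\}$ suitably chosen if $\bar h\neq0$. Since $\bar N\neq0$ forces $(\bar g,\bar h)\neq(0,0)$, one checks that $(p_0,q_0)$ is a smooth point of the plane curve $\{F=0\}$, so Hensel's lemma (Newton approximation suffices, as only a solution modulo $\p^{v_\p(e)}$ is needed) lifts it to $(p_\p,q_\p)\in A_\p^2$ with $F(p_\p,q_\p)\equiv0\pmod{\p^{v_\p(e)}}$ and $p_\p^2-cq_\p^2\in A_\p^\times$. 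At each maximal ideal not containing $e$ I simply impose $(p,q)\equiv(1,0)$. The Chinese Remainder Theorem then glues these residues to a pair $(p,q)\in A^2$ for which $p^2-cq^2$ lies in no maximal ideal, hence is a unit, and $F(p,q)\in\bigcap_{\p\ni e}\p^{v_\p(e)}=eA$, completing the proof.

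I expect the conceptual heart to be the claim that $\bar N$ is a square in $A/\p$ for every prime $\p$ containing $e$: this is the only place where the quadratic-closedness hypothesis is used. The remaining difficulty is bookkeeping in the local step — confirming that the chosen $(p_0,q_0)$ is a smooth zero of $F$ so that Hensel applies, and that the sign $\ep$ can always be arranged so that $p_0^2-\bar cq_0^2$ is a unit, with the ramified case $c\in\p$ needing a little extra care.
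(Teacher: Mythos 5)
Your proof is correct, and its skeleton matches the paper's: reduce to $\ga\in B$ via Lemmas~\ref{lem:quadratic_intergral_closure} and~\ref{lem:ufd_hermitian_square_denominators}, then modify $\ga$ by a norm-one unit of the form $w^*/w$ built locally by Hensel's lemma at the primes dividing $e$ and glued by the Chinese Remainder Theorem. But the implementation is genuinely different in a way worth noting. The paper constructs $\ep\in B^\times$ satisfying $\ep^2\equiv\ga^*\pmod{\q^{n_\q}}$ at each prime $\q\in Z_B(e)$, and takes $\al=\frac{\ep^2}{N(\ep)}\ga=\frac{\ep}{\ep^*}\ga$; this condition, spelled out in coordinates $w=\ep^*=p+q\sqrt c$, amounts to the two scalar congruences $p^2+cq^2\equiv g$ and $2pq\equiv h$, and forces bookkeeping over the primes of $B$ with a split/inert/ramified case distinction in fixing the exponents $n_\q$. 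You instead reformulate the target as a \emph{single} scalar congruence $e\mid F(p,q):=(p^2+cq^2)h-2pqg$ in $A$, which is implied by but weaker than the paper's pair, and then solve this plane-curve congruence directly at the primes of $A$ by finding a nonsingular residual zero and lifting by Newton iteration. This keeps the whole local step inside $A$ rather than $B$; the price is your explicit case analysis ($c\in\p$ vs.\ $c\notin\p$, $\bar h=0$ vs.\ $\bar h\neq0$, choice of sign $\ep$) to exhibit a smooth zero. Both proofs locate the quadratic-closedness hypothesis at the same pressure point: the paper uses it (with Hensel in $B$) to extract a square root of $\ga^*$ modulo powers of $\q$; you use it to show $\overline{N(\ga)}$ is a square in $A/\p$, which is precisely what makes $F$ have a smooth rational zero. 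Your observation that the paper's use of quadratic closedness can be repackaged as ``the norm is a residual square'' is a nice simplification, and your argument is a valid alternative proof.
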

\begin{proof}
	Note that since $2$ is a unit, $B$ is the integral closure of $A$ in $L$ by Lemma~\ref{lem:quadratic_intergral_closure} and hence a Dedekind domain, see \cite[Chapter~I, Proposition~(8.1)]{NeukirchMR1697859}. Also, $B$ is clearly semi-local and therefore, a principal ideal domain \cite[Chapter~IV, §4, Exercise 4]{NeukirchMR1697859}. Denote $N:=N_{L|K}$ the norm form of $L|K$, see \cite[Chapter~I, §2]{NeukirchMR1697859}. As an involution $*$ on $L$ and $B$ we fix the nontrivial $K$-automorphism of $L$. Then $N(x)=x^*x$ for all $x\in L$.
	
	Using Lemma~\ref{lem:ufd_hermitian_square_denominators} we may assume that $\ga\in B$, since $\ga^*\ga\in A\subseteq B$.  We are going to construct $\ep\in B^{\times}$ such that $\ep^2\ga\in A[e\sqrt{c}]$.
	Then, for $\al:=\frac{\ep^2}{N(\ep)}\ga$, we have $N(\al)=N(\ga)$ and $\al\in A[e\sqrt{c}]$, as desired, because $N(\ep)\in A^{\times}$.

	To construct such an $\ep$, we use the fact that for all $\q\in Z_B(e)$ the residue field $k(\q)=B/\q$ is quadratically closed as well as $\ga^*(\q)\neq0$ (because $\ga\in B^{\times}$) to conclude by Hensel's Lemma for complete discrete valuations rings (see \cite[Chapter~II, §4, Lemma (4.6)]{NeukirchMR1697859}) that 
	$\ga^*$ is a square modulo any power of $\q$.
	So we can choose a unit $\ep\in B^{\times}$ satisfying the following finitely many congruences for $\q\in Z_B(e)$
	\[
		\ep^2\equiv \ga^* \quad\text{mod }\q^{n_{\q}}
	\]
	where
	\begin{itemize}
		\item $n_{\q}=2v_{\q}(e)+1$ if $\q|A\cap\q$ is ramified (which is the case if and only if $\q$ is a zero of $\sqrt{c}$)
		\item $n_{\q}=v_{\q}(e)$ otherwise.
	\end{itemize}
	Here, $v_{\q}$ denotes the discrete valuation corresponding to $\q$. Let $a,b\in A$ with $\ga\ep^2=a+b\sqrt{c}$. Then we have
	\[
		b=\frac{\ga\ep^2-(\ga\ep^2)^*}{2\sqrt{c}}.
	\]
	By the choice of $\ep$, we get that $v_{\p}(b)\geq v_{\p}(e)$ for all $\p\in Z_A(e)$ and hence $e$ divides $b$. This means $\ga\ep^2\in A[e\sqrt{c}]$ as desired.
\end{proof}

Combining the previous lemma with Witt's local-global principle leads to the following proposition, which is the main step in the proof of Theorem~\ref{thm:local_snf}.
\begin{prop}\label{prop:local_witt}
	Let $a,b,d\in\R[t]$ be nonzero with $a$ and $b$ positive semidefinite and $a|b|d$. Write
	\[
		\cO:=\left\{\, \frac{c}e \mid c,e\in\R[t],\ e\text{ coprime to }d\,\right\}
	\]
	and let $u,v\in\cO^{\times}$ be positive semidefinite. Then
	\[
		\langle au,bv\rangle\simeq_{\cO}\langle a,buv\rangle.
	\]
\end{prop}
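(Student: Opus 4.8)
The goal is to show $\langle au, bv\rangle \simeq_{\cO} \langle a, buv\rangle$ for $a \mid b \mid d$ positive semidefinite and $u,v \in \cO^\times$ positive semidefinite. The first move is to reduce to a statement about representing a norm: since $a \mid b$, write $b = ab'$ with $b' \in \R[t]$, and factor out the common factor — over $\cO$ the forms $\langle au, bv\rangle$ and $a\langle u, b'v\rangle$ have the same congruence behavior, so it suffices to show $\langle u, b'v \rangle \simeq_\cO \langle 1, b'uv\rangle$ after appropriate rescaling, i.e.\ essentially that the binary form $\langle u, b'v\rangle$ represents a unit of $\cO$ (namely $u$, or something unit-equivalent) with the transformation matrix having entries in $\cO$. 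Comparing determinants then forces the complementary entry to be $b'uv$ up to a square of a unit, which can be absorbed.

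The core of the argument is the controlled representation of a unit. Over $\R(t)$, Corollary~\ref{cor:witt_lgp} already gives $\langle u, b'v\rangle \simeq_{\R(t)} \langle 1, b'uv\rangle$, so the form represents $1$ rationally; the whole difficulty is to achieve this with no poles away from $Z(d)$, i.e.\ over $\cO$. This is exactly what the norm-form machinery of Lemma~\ref{lem:norm_denominators} is built for: writing $c := -b'uv$ (or its square-free part, absorbing square factors into a rescaling of the basis vectors) and viewing $\langle 1, -c\rangle$ as the norm form of $\cO[\sqrt{c}]$, a rational representation of a unit corresponds to an element $\ga \in K[\sqrt c]$ of unit norm, and Lemma~\ref{lem:norm_denominators} (with $A = \cO$, which is a semi-local PID with $2$ a unit) produces $\al \in A[e\sqrt c]$ of the same norm, for a suitable $e$. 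Here $e$ should be chosen so that $Z_B(e)$ covers precisely the ramified primes and the primes where the relevant residue fields fail to be quadratically closed — but over $\R$, the residue fields at maximal ideals of $\cO$ are $\R$ or $\C$, and the positive-semidefiniteness of $c$'s relevant values (inherited from $u,v,b'$ being positive semidefinite and $d$ controlling the zeros) means the obstruction to $c$ being a square modulo $\q$ only lives at the real places over zeros of $d$; one packages these into $e$ and checks the residue-field hypothesis of the lemma holds at the zeros of $e$.

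Once a unit-norm element $\al = x + y\sqrt c$ with $x, y \in \cO$ is in hand, it yields an explicit congruence: $\langle 1, -c\rangle$ represents $N(\al) \in \cO^\times$, and the standard base change (the matrix $\begin{pmatrix} x & cy \\ y & x\end{pmatrix}$ up to scaling) transforms $\langle 1, -c\rangle$ over $\cO$ into $\langle N(\al), -cN(\al)\rangle \simeq_\cO \langle 1, -c\rangle$ after dividing through by the unit $N(\al)$. Translating back through the reductions — undoing the rescalings by $u$, $v$ and the square factors, and reinstating the common factor $a$ — gives $\langle au, bv\rangle \simeq_\cO \langle a, buv\rangle$. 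The main obstacle is the bookkeeping in the second paragraph: correctly choosing $c$ (and its square-free reduction) and the auxiliary element $e$ so that the hypotheses of Lemma~\ref{lem:norm_denominators} are met, in particular verifying that the residue fields at the zeros of $e$ are quadratically closed, which is where the positive-semidefiniteness of $u$, $v$, $b$ and the divisibility $b \mid d$ must be used to rule out real residue fields at the bad primes; everything else is routine linear algebra over $\cO$.
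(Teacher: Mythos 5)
Your plan follows the same route as the paper: factor out $a$ to reduce to a binary form, invoke Corollary~\ref{cor:witt_lgp} to get a representation over $\R(t)$, reinterpret that representation as a norm in a quadratic extension, and then use Lemma~\ref{lem:norm_denominators} to eliminate poles at $Z(d)$. So the skeleton is right, and your remark about comparing determinants and absorbing a square of a unit at the end is also correct.

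Where you go wrong is in verifying the hypothesis of Lemma~\ref{lem:norm_denominators}. You describe $e$ as something to be \emph{chosen} so that $Z_B(e)$ covers the ramified primes and the primes where the residue field ``fails to be quadratically closed,'' and you speak of an ``obstruction'' at real places over zeros of $d$. That is backwards on both counts. First, $e$ is not freely adjustable: it is forced by the square-free decomposition $-\frac{bv}{u}=e^2c$ (the square part is exactly what you must control to recover $\langle u,bv\rangle$ from $\langle 1,-c\rangle$), so you must verify the residue-field hypothesis at whatever zeros $e$ happens to have. Second, and more importantly, the lemma requires the residue fields $B/\q$ at zeros of $e$ to \emph{be} quadratically closed, so packaging the ``bad'' primes into $e$ would precisely violate the hypothesis rather than satisfy it. The clean observation that makes this work is entirely absent from your write-up: $c$ is negative semidefinite (as $-bv/u$ is) \emph{and square-free}, and a square-free negative semidefinite polynomial in $\R[t]$ has no real zeros at all. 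Consequently, at any maximal ideal of $\cO$ lying over a real zero of $d$, the value of $c$ is strictly negative, so adjoining $\sqrt{c}$ lands you in $\C$; at a complex place the residue field is already $\C$. Hence \emph{every} residue field of $B=\cO[\sqrt c]$ is isomorphic to $\C$, so the hypothesis of Lemma~\ref{lem:norm_denominators} holds for any $e$, and there is nothing to package. Without this step your argument does not establish the lemma's hypothesis and hence has a genuine gap exactly at the point the proposition is supposed to be doing its work.
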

\begin{proof}
	Since $a$ divides $b$, we can factor out $a$ and therefore assume that $a=1$. By Corollary~\ref{cor:witt_lgp} to Witt's local-global principle, $1$ is represented by $\langle u,bv\rangle$ over the rational function field $\R(t)$. Dividing by $u$, we get that $\frac1u$ is represented over $\R(t)$ by
	\[
		\left\langle 1,\frac{bv}u\right\rangle\cong_{\R(t)}\langle1,-c\rangle
	\]
	where $c\in\cO$ is the square free part of $-\frac{bv}{u}$, i.e.~$c$ is square free and $-\frac{bv}{u}=e^2c$ for some $e\in\cO$. Being represented by $\langle1,-c\rangle$ means being the norm of an element $\R(t)[\sqrt{c}]$. Clearly, $c$ is negative semidefinite. Since $c$ is square free, it cannot have a real zero. In particular, all quotients of $\cO[\sqrt{c}]$ modulo its maximal ideals are isomorphic to $\C$ and hence quadratically closed. Now we can use Lemma~\ref{lem:norm_denominators} to get a representation of $\frac1u$ as a norm of an element of $\cO[e\sqrt{c}]$. In other words $\frac1u$ is represented by $\langle 1,-e^2c\rangle$ over $\cO$ or (multiplying by $u$) we get that $1$ is represented by
	\[
		\langle u,-ue^2c\rangle=\langle u,bv\rangle.
	\]
	 Hence the latter is congruent to $\langle 1, buv\rangle$ over $\cO$.
\end{proof}

Similarly to Corollary~\ref{cor:rational_factorization} we apply this Proposition inductively to obtain the main result of this section.
\begin{thm}\label{thm:local_snf}
	Let $M\in\Her_n(\K[t])$ positive semidefinite. Then $M$ and its monic Smith normal form are congruent over $\cO_M$.
\end{thm}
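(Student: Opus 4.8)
The plan is to reduce the statement to the two-dimensional case handled by Proposition~\ref{prop:local_witt}, mimicking the inductive argument of Corollary~\ref{cor:rational_factorization} but keeping careful track of the ring over which congruences take place. Write $d := \det(M)$ and $\cO := \cO_M$. If $d = 0$ we first split off the kernel: since $\K[t]$ is a principal ideal domain we may choose a $\K[t]$-basis adapted to $\ker M$, so that after a $\K[t]$-congruence (hence a fortiori an $\cO$-congruence) $M = M' \oplus 0$ with $M'$ non-degenerate and positive semidefinite; the monic Smith normal form of $M$ is that of $M'$ padded with zeros, so we may assume $d \neq 0$.

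Now, since $\cO = \cO_M$ is a semi-local principal ideal domain containing $\Q$ (and, in the hermitian case, one that is unramified over its ring of $*$-invariants because every ideal of $\cO$ is generated by a monic polynomial, which is real when the ideal is $*$-invariant — see Remark~\ref{rem:snf_hermitian}), Lemma~\ref{lem:snf} together with Remark~\ref{rem:snf_hermitian} diagonalizes $M$ over $\cO$: we get $M \simeq_{\cO} \langle a_1, \dots, a_n \rangle$ with $a_1 \mid a_2 \mid \cdots \mid a_n$ in $\cO$. Each $a_i$ lies in $\K[t]$ up to a unit of $\cO$ — indeed the ideal $(a_i)$ of $\cO$ is generated by a monic polynomial $p_i \in \K[t]$ — so we may as well take $a_i = p_i \in \K[t]$, monic, and still $p_1 \mid \cdots \mid p_n$ in $\cO$, which forces $p_1 \mid \cdots \mid p_n$ as polynomials since every prime of $\K[t]$ dividing $d$ is visible in $\cO$ and the $p_i$ have no other prime factors (any prime factor of $p_i$ divides $\det M = d \cdot (\text{unit})$). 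Moreover each $a_i$ is positive semidefinite: $M$ is positive semidefinite wherever defined, hence so is any diagonalization, and a monic polynomial that is positive semidefinite is exactly what we need. We now have $M \simeq_{\cO} \langle p_1, \dots, p_n \rangle$ with the $p_i$ monic, positive semidefinite, and $p_1 \mid \cdots \mid p_n \mid (p_1 \cdots p_n) = d \cdot u_0$ for some $u_0 \in \cO^\times$.

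The heart is then the inductive collapse. Applying Proposition~\ref{prop:local_witt} to the pair $(p_1, p_2)$ — here $a = p_1$, $b = p_2$, $u = v = 1$, and the divisibility $p_1 \mid p_2 \mid d$ holds after absorbing the unit $u_0$ into $d$, which does not change $\cO$ — gives $\langle p_1, p_2 \rangle \simeq_{\cO} \langle p_1, p_1 p_2 \rangle$, and iterating,
\[
\langle p_1, \dots, p_n \rangle \simeq_{\cO} \langle p_1, p_1 p_2, p_3, \dots, p_n \rangle \simeq_{\cO} \cdots \simeq_{\cO} \langle p_1, p_1 p_2, p_1 p_2 p_3, \dots, p_1 \cdots p_n \rangle,
\]
where at the $j$-th step one applies the proposition to the entries in positions $j$ and $j+1$, which at that stage are $p_1 \cdots p_{j-1}$-times $p_j$ and $p_{j+1}$ respectively; the factor $p_1 \cdots p_{j-1}$ is a positive semidefinite polynomial dividing $d$, so the hypotheses of the proposition are met after pulling it out as in the first line of that proof. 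The resulting diagonal matrix $\langle b_1, \dots, b_n \rangle$ with $b_j = p_1 \cdots p_j$ has monic entries satisfying $b_1 \mid b_2 \mid \cdots \mid b_n$ and $b_1 \cdots b_n$ agreeing with $\det M$ up to an $\cO^\times$ — but since both $\langle b_1, \dots, b_n\rangle$ and the monic Smith normal form of $M$ have monic entries and the same chain of invariant-factor ideals in $\K[t]$ (the product of the first $i$ of them generates the $i$-th determinantal ideal, which is congruence-invariant up to units and hence, being generated by monic polynomials on both sides, literally equal), the $b_j$ are precisely the monic invariant factors of $M$. Thus $M \simeq_{\cO_M} \langle b_1, \dots, b_n \rangle$ is the monic Smith normal form, as claimed.

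The main obstacle is the bookkeeping in the collapse step: one must verify at each stage that the accumulated prefactor $p_1 \cdots p_{j-1}$ and the current entry still satisfy the divisibility and positive-semidefiniteness hypotheses of Proposition~\ref{prop:local_witt}, and — more subtly — that the units in $\cO^\times$ introduced along the way (both in Lemma~\ref{lem:snf}'s diagonalization and in each application of the proposition) can be absorbed without destroying monicity, so that the final diagonal entries are genuinely the monic invariant factors rather than merely associates of them in $\cO$. This is exactly the "control of denominators" the section is built to provide, so the pieces are in place; the work is in assembling them cleanly.
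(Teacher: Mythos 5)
Your proposal contains a genuine gap, and in fact two separate errors that both stem from the same misreading of where the difficulty lies.

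The first and central problem is the sentence ``so we may as well take $a_i = p_i \in \K[t]$, monic.'' Lemma~\ref{lem:snf} (with Remark~\ref{rem:snf_hermitian}) gives $M \simeq_{\cO} \langle a_1,\dots,a_n\rangle$ with $a_i \in \cO$, and each $a_i$ differs from the monic generator $p_i$ of the ideal $(a_i)\subseteq\cO$ by a unit $u_i\in\cO^\times$. You cannot simply \emph{declare} $a_i = p_i$: replacing a diagonal entry by an associate is not a congruence unless the unit is a (hermitian) square. Getting rid of these unit factors $u_i$ is precisely the content of the theorem; once you have $M \simeq_{\cO}\langle p_1,\dots,p_n\rangle$ with the $p_i$ the monic invariant factors, you are already done and no further ``collapse'' is needed. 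The paper's proof instead writes $b_i = a_i u_i$ explicitly, shows that the $u_i$ are positive semidefinite by a coprimality argument, and then uses Proposition~\ref{prop:local_witt} to push the units down the diagonal, concentrating them in the last slot where the determinant forces the remaining unit to be a square.

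The second problem is your use of Proposition~\ref{prop:local_witt}. With $a=p_1$, $b=p_2$, $u=v=1$ it reads $\langle p_1,p_2\rangle\simeq_{\cO}\langle p_1,p_2\rangle$, a tautology; it does \emph{not} give $\langle p_1,p_2\rangle\simeq_{\cO}\langle p_1,p_1p_2\rangle$. That latter congruence is in fact generically false, because it would change the determinant by the factor $p_1$, and congruence only changes the determinant by a square of a unit of $\cO$; a nonconstant monic $p_1$ dividing $\det M$ is not a unit of $\cO$, let alone a square. For the same reason the ``collapsed'' matrix $\langle b_1,\dots,b_n\rangle$ with $b_j = p_1\cdots p_j$ has determinant $\prod_i p_i^{\,n-i+1}$, which disagrees with $\det\langle p_1,\dots,p_n\rangle = \prod_i p_i$ by a non-square, so the two cannot be $\cO$-congruent. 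It seems you conflated Proposition~\ref{prop:local_witt} with Corollary~\ref{cor:witt_lgp}, where the transformation $\langle a,b\rangle\simeq\langle 1,ab\rangle$ does preserve the determinant up to squares; Proposition~\ref{prop:local_witt} is designed for a different purpose, namely to transport \emph{units} between entries while keeping the invariant factors $a$ and $b$ fixed, and this is exactly what the paper's proof uses it for.
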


\begin{proof}
	Let $\cO:=\cO_M$. Using Lemma~\ref{lem:snf} and Remark~\ref{rem:snf_hermitian} we get $b_1|\dots|b_k\in\cO$ such that
	\[
		M\simeq_{\cO}E:=\langle b_1,\dots,b_k,0,\dots,0\rangle.
	\]
	We may assume that $k=n$, i.e.~$M$ is non-degenerate. Now let $D=\langle a_1,\dots,a_n\rangle$ be the monic Smith normal form of $M$ over $\K[t]$. Then both $D$ and $E$ are Smith normal forms of $M$ over $\cO$. Due to uniqueness there exist units $u_i\in\cO^{\times}$ such that $b_i=a_iu_i$ for all $i$. We now show that $E$ and $D$ are congruent over $\cO$.

	Let $i\in\{1,\dots,n\}$. The determinant of $M$ is divisible by $a_i$. Since $u_i$ is a unit in $\cO$, the numerator and denominator of $u_i$ (in a representation in lowest terms) are coprime to $a_i$ by the definition of $\cO=\cO_M$. In particular, the rational functions $u_i$ and $a_i$ cannot have simultaneous sign changes. Since $b_i=a_iu_i$ is positive semidefinite, both $a_i$ and $u_i$ must be positive semidefinite as well.

	If $\K=\C$, then $u_1,\dots,u_n$ are hermitian squares of units in $\cO$ and we are already done. So now we consider the case $\K=\R$.
	
	We apply Proposition \ref{prop:local_witt} to get that the subform $\langle a_1u_1,a_nu_n\rangle$ of $D$ is congruent to $\langle a_1,a_nu_1u_n\rangle$ over $\cO$. Replacing $u_n$ by $u_1u_n$, we can therefore assume that $u_1=1$. Repeating this argument we can also assume that $u_2=\dots=u_{n-1}=1$. Up to a positive constant, $\prod_{i=1}^na_i$ is the determinant of $M$. Since congruent transformations only change the determinant by a square, we conclude that $u_n$ must be a square and hence can also be assumed to be $1$, which finishes the proof.
\end{proof}

\section{Proof of the Main Theorems}\label{sec:proofs}
As in the previous sections, we let $\K\in\{\R,\C\}$. We combine our previous work in order to prove the main results.

\begin{thm}\label{thm:factorization_snf}
	Let $M\in\Her_n(\K[t])$ be positive semidefinite and non-degenerate. Then the equivalence classes of $n\times n$-factorizations $M=Q^*Q$ are in one-to-one correspondence to those of the monic Smith normal form of $M$. 
\end{thm}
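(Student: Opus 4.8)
The plan is to deduce Theorem~\ref{thm:factorization_snf} by chaining together the two main technical results already established, namely Theorem~\ref{thm:cassels_pfister} (the Cassels--Pfister correspondence between factorizations over $\K[t]$ and over $\cO_M$) and Theorem~\ref{thm:local_snf} (congruence of $M$ with its monic Smith normal form $D$ over $\cO_M$). The key observation is that factorization data is transported along congruences. Concretely, if $M = S^*DS$ with $S \in \Gl_n(\cO_M)$, then $Q \mapsto QS^{-1}$ should give a bijection $\fac{M}{\cO_M}{n} \to \fac{D}{\cO_M}{n}$: indeed $Q^*Q = M$ iff $(QS^{-1})^*(QS^{-1}) = S^{-*}MS^{-1} = D$, and this map is compatible with the left $\U_n(\cO_M)$-action since the action is on the left while $S^{-1}$ multiplies on the right, so it descends to the orbit sets.

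First I would record the general lemma that a congruence $M = S^*NS$ over a ring $R$ with involution induces a bijection $\fac{M}{R}{k} \to \fac{N}{R}{k}$, $[Q] \mapsto [QS^{-1}]$, with inverse $[P] \mapsto [PS]$; this is a one-line verification but worth stating cleanly since it is the conceptual heart of the argument. Then I would apply it with $R = \cO_M$, $N = D$ the monic Smith normal form, and $S$ the congruence matrix supplied by Theorem~\ref{thm:local_snf}, obtaining $\fac{M}{\cO_M}{n} \cong \fac{D}{\cO_M}{n}$. Next, Theorem~\ref{thm:cassels_pfister} gives $\fac{M}{\K[t]}{n} \cong \fac{M}{\cO_M}{n}$ via the canonical map. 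Composing the two bijections yields $\fac{M}{\K[t]}{n} \cong \fac{D}{\cO_M}{n}$.

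The one genuine subtlety — and the step I expect to need the most care — is matching $\fac{D}{\cO_M}{n}$ with the factorizations ``of the monic Smith normal form'' as the statement intends, i.e.\ with $\fac{D}{\K[t]}{n}$ rather than with factorizations over the larger ring $\cO_M$. For this I would observe that $D$ is itself a non-degenerate positive semidefinite matrix in $\Her_n(\K[t])$ whose determinant $\det D$ divides (up to a unit) $\det M$, so that $\cO_D \supseteq \cO_M$; hence applying Theorem~\ref{thm:cassels_pfister_surjective} and Proposition~\ref{prop:cassels_pfister_injective} to $D$ with the intermediate ring $\cO = \cO_M \subseteq \cO_D$ gives that the canonical map $\fac{D}{\K[t]}{n} \to \fac{D}{\cO_M}{n}$ is also a bijection. (Surjectivity is Theorem~\ref{thm:cassels_pfister_surjective} applied to the $*$-invariant subring $\cO_M$; injectivity is Proposition~\ref{prop:cassels_pfister_injective} with $\cO = \cO_M \subseteq \cO_D$.) Composing all three bijections then yields the claimed one-to-one correspondence between $\fac{M}{\K[t]}{n}$ and $\fac{D}{\K[t]}{n}$, and I would remark that all these bijections are natural enough that the correspondence can be made explicit via the congruence matrix $S$.
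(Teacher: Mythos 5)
Your proof is correct and is essentially the paper's own argument: both chain together the vertical bijections from Theorem~\ref{thm:cassels_pfister} (applied to $M$ and to $D$) with the horizontal bijection induced by the congruence matrix from Theorem~\ref{thm:local_snf}. The extra care you take in the last step is unnecessary: since $D$ and $M$ are related by invertible matrices over $\K[t]$, their determinants differ only by a nonzero constant, so $\cO_D=\cO_M$ on the nose, and Theorem~\ref{thm:cassels_pfister} applies to $D$ directly.
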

\begin{proof}
	Denote $\cO:=\cO_M$ and let $D$ be the monic Smith normal form of $M$. We consider the following diagram.
	\begin{center}$
		\begin{tikzcd}
			\fac M{\K[t]}n\ar[r, dashed]\ar[d]	& \fac D{\K[t]}n\ar[d]\\
			\fac M{\cO}n\ar[r]			& \fac D{\cO}n
		\end{tikzcd}$
	\end{center}
	By Theorem~\ref{thm:cassels_pfister} we have vertical bijections induced by the inclusion. By Theorem~\ref{thm:local_snf} we have $M\simeq_{\cO}D$, i.e.~there exists an invertible matrix $T\in\Mat_n(\cO)$ with $D=T^*MT$. Right multiplication with $T$ induces a bijection on the bottom.
\end{proof}

From this we obtain the result for $\K=\C$ as mentioned in the introduction.
\begin{cor}\label{cor:complex_factorization}
	Let $M\in\Her_n(\C[t])$ be positive semidefinite with nonzero and square free determinant. Then the determinant map induces a bijection between the sets
	\[
		\{\, Q\in\Mat_n(\C[t]) \mid Q^*Q=M\,\}
	\]
	and
	\[
		\{\, g\in\C[t] \mid g^*g=\det(M)\,\}
	\]
	modulo the left action of the unitary groups $U_n(\C)$ and $U_1(\C)$, respectively.
\end{cor}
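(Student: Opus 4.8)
The plan is to deduce this corollary from Theorem~\ref{thm:factorization_snf} by identifying the monic Smith normal form of $M$ explicitly in the square-free determinant case and tracking what $n\times n$-factorizations of that normal form look like. First I would observe that since $\det(M) =: d$ is square-free, its invariant factors $a_1 \mid a_2 \mid \dots \mid a_n$ over $\C[t]$ satisfy $a_1 = \dots = a_{n-1} = 1$ and $a_n = d$ (up to the monic normalization): indeed each $a_i$ is monic, their product is (a scalar multiple of) $d$, and any repeated irreducible factor would force $a_{n-1}$ and $a_n$ to share a root, contradicting square-freeness. Hence the monic Smith normal form is $D = \1_{n-1} \oplus \langle d \rangle$.

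Next I would compute $\fac{D}{\C[t]}{n}$ directly. By Lemma~\ref{lem:one_cancellation} (applied with $k = n-1$ to the $1\times 1$ matrix $\langle d\rangle$), the classes of $n\times n$-factorizations of $D = \langle d\rangle \oplus \1_{n-1}$ correspond bijectively to the classes of $1\times 1$-factorizations of $\langle d\rangle$, i.e.\ to
\[
	\{\, g \in \C[t] \mid g^*g = d \,\}/\U_1(\C),
\]
which is exactly the second set in the statement. Here I should be slightly careful about the matching of the block decomposition in Lemma~\ref{lem:one_cancellation} ($M \oplus \1_k$) with $D$ as written; a harmless permutation of basis vectors, which is a unitary congruence with constant entries, reconciles the two, so it does not affect the count.

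Then I would chain the bijections: Theorem~\ref{thm:factorization_snf} gives $\fac{M}{\C[t]}{n} \leftrightarrow \fac{D}{\C[t]}{n}$, and the previous paragraph gives $\fac{D}{\C[t]}{n} \leftrightarrow \{\, g \mid g^*g = d\,\}/\U_1(\C)$, so composing yields the asserted bijection between the two orbit sets. Finally I would check that this composite bijection is the one induced by the determinant map, as claimed: if $Q^*Q = M$ then $\det(Q)^* \det(Q) = \det(M) = d$, so $g := \det(Q)$ is a valid element of the target, and $\det$ is clearly $\U_n(\C)$-invariant up to $\U_1(\C) = \{z \in \C : z^*z = 1\}$ (since $\det(UQ) = \det(U)\det(Q)$ with $\det(U)^*\det(U) = 1$), hence descends to a map on orbits. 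To see it agrees with the composite, one traces through the two steps: the congruence $D = T^*MT$ from Theorem~\ref{thm:local_snf} has $\det(T)^*\det(T) = 1$ in $\C$, hence $|\det T| = 1$, so passing through the Smith normal form multiplies determinants only by a unimodular constant, and the Lemma~\ref{lem:one_cancellation} step replaces $Q \oplus \1_{n-1}$-type data by its nontrivial $1\times 1$ block whose determinant is the same $g$; thus the determinant of a representative is carried to $g$ up to $\U_1(\C)$, which is precisely the determinant map on orbits.

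The main obstacle I expect is purely bookkeeping rather than conceptual: verifying that the abstract bijection furnished by Theorems~\ref{thm:factorization_snf} and~\ref{lem:one_cancellation} really coincides with the concrete determinant map, i.e.\ controlling the unimodular scalar ambiguities introduced by the congruence $T$ and by the block permutation, and confirming that the well-definedness (the orbit of $g$ under $\U_1(\C)$ is independent of the chosen representative $Q$) is exactly what makes the determinant map descend. None of the individual points is hard, but one must be attentive that the $\U_1(\C)$-action on the target matches the residual freedom on the source.
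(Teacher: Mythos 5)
Your approach is the same as the paper's: read off the monic Smith normal form $D=\1_{n-1}\oplus\langle d\rangle$ from square-freeness, chain the bijection of Theorem~\ref{thm:factorization_snf} with Lemma~\ref{lem:one_cancellation}, and verify the composite is the determinant map.

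One point in your final ``tracing through'' is not quite right and is worth flagging. You assert that the congruence $T$ from Theorem~\ref{thm:local_snf} has $\det(T)\in\C$ with $|\det T|=1$. But $T$ has entries in $\cO_M$, not in $\C$; the identity $\det(T)^*\det(T)=1$ only says $\det T$ is a unimodular unit of $\cO_M$, and such elements (e.g.\ $(t-i)/(t+i)$) need not be constants. Moreover, the bijection of Theorem~\ref{thm:factorization_snf} also passes through Theorem~\ref{thm:cassels_pfister}, which contributes a further factor $\det(U)$ with $U\in\U_n(\cO_M)$, likewise only unimodular over $\cO_M$. So what you actually obtain is that $g=f\cdot\det(Q)$ where $g,\det(Q)\in\C[t]$ satisfy $g^*g=\det(Q)^*\det(Q)=d$ and $f\in\cO_M^\times$ with $f^*f=1$. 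This does force $f$ to be a constant, but one must say why: writing $f=p/q$ in lowest terms, $p$ and $q$ are coprime to $d$ by definition of $\cO_M$; from $gq=p\det(Q)$ and $\det(Q)\mid d$ one gets $q\mid\det(Q)\mid d$, hence $q$ is a unit, and symmetrically $p$ is a unit. So $f\in\C$ with $|f|=1$, which is exactly what identifies the two orbits under $\U_1(\C)$. With that repair, the argument is complete and agrees with the paper's (terse) ``Following the construction, it is clear that the map is induced by the determinant.''
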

\begin{proof}
	Without loss of generality we can assume that $d:=\det(M)$ is monic. Since $d$ is square free, the monic Smith normal form of $M$ is given by 
	\[
		D=\langle 1,\dots,1,d\rangle=\1_{n-1}\oplus \langle d\rangle.
	\]
	We combine the two bijections
	\[
		\fac M{\C[t]}n\to\fac D{\C[t]}n=\fac {\1_{n-1}\oplus\langle d\rangle}{\C[t]}n\to \fac {\langle d\rangle}{\C[t]}1
	\]
	from Theorem~\ref{thm:factorization_snf}  and Lemma~\ref{lem:one_cancellation}, respectively. By Remark~\ref{rem:degreesos}, $\U_n(\C[t])=\U_n(\C)$. Following the construction, it is clear that  the map is induced by the determinant.
\end{proof}

\subsection{Cauchy-Binet}
We want to show that the analogue of Theorem~\ref{thm:factorization_snf} holds also for $(n+1)\times n$ instead of $n\times n$-factorizations. For this we observe the following. For any integral domain $R$ and nonzero $d\in R$, the decompositions
\[
	d=a^2+b^2
\]
as a sum of two squares are basically the same as $2\times 2$-factorizations
\[
	\langle d, d\rangle=Q^TQ
\]
since for $d=a^2+b^2$ we can extend the vector $(a,b)^T$ to the matrix $Q=\begin{pmatrix}a&-b\\b&a\end{pmatrix}$. This observation can also be generalized to higher dimensions in the following way.
\begin{lem}\label{lem:cauchy_binet}
	Let $R$ be an integral domain, $M\in\Sym_n(R)$ with nonzero determinant $d=\det(M)$ and $Q\in \Mat_{(n+1)\times n}(R)$ such that $Q^TQ=M$. Then $Q$ can be extended to a square factorization of $M\oplus \langle d\rangle$. More precisely, there exists a vector $v\in R^{n+1}$ such that for $P=(Q|v)\in \Mat_{n+1}(R)$ we have $P^TP=M\oplus \langle d\rangle$. Moreover, $v$ is uniquely determined up to a scalar factor $\pm 1$.
\end{lem}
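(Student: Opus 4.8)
The plan is to construct the column $v$ directly from $Q$ using the adjugate (classical adjoint) matrix, exactly as in the scalar $n=1$ case where $v=(-b,a)^T$ is obtained by rotating $(a,b)^T$. Write $Q\in\Mat_{(n+1)\times n}(R)$ and let $Q_i\in\Mat_n(R)$ denote the matrix obtained from $Q$ by deleting the $i$-th row, so the $Q_i$ are the maximal minors-bearing submatrices. The natural candidate is $v\in R^{n+1}$ with $i$-th entry $v_i=(-1)^i\det(Q_i)$; this is (up to sign) the unique — up to scalar — generator of the kernel of $Q^T\colon R^{n+1}\to R^n$ over $\Quot(R)$, and the fact that $\gcd$-type reasoning makes it primitive will give the uniqueness statement. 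I would then need to check two things: first, $Q^Tv=0$, i.e.~the columns of $Q$ are orthogonal to $v$; and second, $v^Tv=\sum_i\det(Q_i)^2=d=\det(M)$. With these, $P=(Q\,|\,v)$ satisfies
\[
	P^TP=\begin{pmatrix}Q^TQ & Q^Tv\\ v^TQ & v^Tv\end{pmatrix}=\begin{pmatrix}M&0\\0&d\end{pmatrix}=M\oplus\langle d\rangle.
\]

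For the first check, $Q^Tv=0$ is a Laplace-expansion identity: the $j$-th entry of $Q^Tv$ is $\sum_i(-1)^i Q_{ij}\det(Q_i)$, which is (up to an overall sign) the determinant of the $(n+1)\times(n+1)$ matrix obtained from $Q$ by repeating its $j$-th column, hence zero. For the second check, the Cauchy–Binet formula applied to $Q^TQ=M$ gives $\det(M)=\det(Q^TQ)=\sum_{i}\det(Q_i)^2$, where the sum runs over all ways of choosing $n$ of the $n+1$ rows of $Q$ — which is precisely $\sum_i v_i^2=v^Tv$. This is where the name of the subsection comes from, and it is the cleanest route; the sign bookkeeping in matching "delete row $i$" with the $(-1)^i$ is routine.

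The remaining point is uniqueness of $v$ up to $\pm1$. Suppose $v'\in R^{n+1}$ also satisfies $(Q\,|\,v')^T(Q\,|\,v')=M\oplus\langle d\rangle$, so $Q^Tv'=0$ and $v'^Tv'=d$. Since $d\neq0$, $Q$ has rank $n$ over $\Quot(R)$, so $\ker(Q^T)$ is one-dimensional over $\Quot(R)$ and spanned by $v$; thus $v'=\lambda v$ for some $\lambda\in\Quot(R)$. Then $d=v'^Tv'=\lambda^2 v^Tv=\lambda^2 d$, forcing $\lambda^2=1$, hence $\lambda=\pm1$ (here one uses that $R$ is a domain, so $\Quot(R)$ is a field with only the two square roots of $1$). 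The only subtlety I anticipate — and the part I would state carefully — is that $v'=\lambda v$ with $\lambda^2=1$ a priori only gives $\lambda\in\Quot(R)$, but $\lambda=\pm1$ already lies in $R$, so $v'=\pm v\in R^{n+1}$ as required; no primitivity argument for $v$ itself is actually needed for the uniqueness statement as phrased. The main obstacle, such as it is, is purely the sign/index bookkeeping in the two determinantal identities; there is no conceptual difficulty once Cauchy–Binet is invoked.
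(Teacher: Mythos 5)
Your proof is correct and follows essentially the same approach as the paper: both define $v_i=(-1)^i\det(Q_i)$ from the signed maximal minors, verify $Q^Tv=0$ by a determinantal identity (the paper cites Cramer's rule, which is the same Laplace-expansion-with-repeated-column argument you spell out), compute $v^Tv=\det(Q^TQ)=d$ via Cauchy--Binet, and deduce uniqueness from the one-dimensionality of $\ker Q^T$ over $\Quot(R)$.
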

\begin{proof}
	Uniqueness is clear, since the vector space (over $\Quot R$) of solutions to $Q^Tv=0$ is one-dimensional. So there are at most $2$ solutions with the additional requirement $v^Tv=d$. To show existence, we define the $i$-th component of $v$ to be the $i$-th maximal minor of $Q$ with sign $(-1)^i$. Then by Cramer's rule $Q^Tv=0$. Moreover, using the Cauchy-Binet formula to compute the determinant of $Q^TQ$, we get that $v^Tv=\det(Q^TQ)=d$.
\end{proof}

\begin{rem}\label{rem:cauchy_binet}
	We want to note that the analogue of the preceding lemma holds for factorizations of hermitian matrices over rings with involutions. The difference is that the extending vector $v$ is determined up to a unit of norm $1$, i.e.~a factor $u\in R^{\times}$ with $u^*u=1$ (instead of $u^2=1$).
\end{rem}

With $R$ and $M$ as above (using Remark~\ref{rem:cauchy_binet}, $M$ might as well be hermitian, if $R$ is a ring with involution), this lemma can be essentially reformulated as follows.
\begin{cor}\label{cor:cauchy_binet}
	Removal of the last column induces a bijection
	\[
	\pushQED{\qed}
		\fac {M\oplus \langle \det(M)\rangle}R{n+1}\to\fac MR{n+1}. \qedhere
	\]
\end{cor}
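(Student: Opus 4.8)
The corollary is the class (orbit) version of Lemma~\ref{lem:cauchy_binet} together with Remark~\ref{rem:cauchy_binet}, so the plan is to verify that the column-removal map, sending the class of $P$ to the class of the matrix $P'$ obtained by deleting the last column of $P$, is well defined, surjective, and injective.

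\textbf{Well-definedness.} Writing $P=(P'\mid v)$ in block-column form with $P'\in\Mat_{(n+1)\times n}(R)$ and $v\in R^{n+1}$, the upper-left $n\times n$ block of $P^*P=M\oplus\langle\det M\rangle$ reads $(P')^*P'=M$, so $[P']\in\fac M R{n+1}$; and if $P_2=UP_1$ with $U\in\U_{n+1}(R)$ then $P_2'=UP_1'$, so the class $[P']$ depends only on $[P]$.

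\textbf{Surjectivity.} Given $[Q]\in\fac M R{n+1}$, Lemma~\ref{lem:cauchy_binet} (resp.\ Remark~\ref{rem:cauchy_binet} when the involution on $R$ is nontrivial) supplies $v\in R^{n+1}$ with $(Q\mid v)^*(Q\mid v)=M\oplus\langle\det M\rangle$, and $[(Q\mid v)]$ is then a preimage of $[Q]$.

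\textbf{Injectivity, the main point.} Suppose $[P_1]$ and $[P_2]$ both map to $[Q]$. Choosing representatives $P_i$ and unitaries $U_i\in\U_{n+1}(R)$ with $P_i'=U_iQ$, and replacing $P_i$ by $U_i^{-1}P_i$, we may assume $P_i=(Q\mid v_i)$ where $Q^*v_i=0$ and $v_i^*v_i=\det M$; that is, each $v_i$ is an admissible last column for $Q$. By the uniqueness clause of Lemma~\ref{lem:cauchy_binet} (resp.\ Remark~\ref{rem:cauchy_binet}) the vectors $v_1,v_2$ differ by a factor $u\in R^\times$ with $u^*u=1$. What remains is to produce $W\in\U_{n+1}(R)$ with $WQ=Q$ and $Wv_1=v_2$; since $Q^*Q=M$ is non-degenerate and $\det M\neq 0$, over $\Quot R$ the columns of $Q$ together with $v_1$ form an orthogonal basis, which forces the candidate $W=\1_{n+1}-\frac{1-u}{\det M}\,v_1v_1^{\,*}$ (one checks $W^*W=\1_{n+1}$ using $v_1^*v_1=\det M$ and $u^*u=1$). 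The hard part is to see that this $W$ in fact has entries in $R$, i.e.\ that the norm-one ambiguity in the last column can be absorbed by an honest unitary over $R$; I would attack this by localizing at the finitely many primes of $R$ dividing $\det M$ and analyzing, prime by prime, when $\det M$ divides the entries of $v_1v_1^{\,*}$. Granting this, $WP_1=P_2$, hence $[P_1]=[P_2]$, and the map is a bijection.
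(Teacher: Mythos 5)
Your treatment of well-definedness and surjectivity is correct, and since the paper offers no proof here (the statement is presented as an immediate reformulation of Lemma~\ref{lem:cauchy_binet} and Remark~\ref{rem:cauchy_binet}), your attempt to spell out injectivity goes further than the paper does. You have isolated the one genuine issue: after normalizing to $P_i=(Q\mid v_i)$ with $v_2=uv_1$ for a norm-one unit $u\in R^\times$, one would need $W\in\U_{n+1}(R)$ with $WQ=Q$ and $Wv_1=v_2$, and over $\Quot(R)$ the only candidate is $W=\1_{n+1}-\frac{1-u}{\det M}\,v_1v_1^*$. Whether this $W$ has entries in $R$ is exactly where the proof must stand or fall.

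The gap is not merely technical, and your proposal to resolve it by localizing at the primes dividing $\det M$ cannot work, because $\frac{1-u}{\det M}\,v_1v_1^*$ must already lie in $\Mat_{n+1}(R)$ and in general it does not. In fact the stated map fails to be injective. Take $R=\R[t]$, $n=1$, $M=(t^2+1)$, so $M\oplus\langle\det M\rangle=(t^2+1)\1_2$ and $Q=(1,t)^T$ is a factorization of $M$. The two admissible extensions $v=\pm(-t,1)^T$ give
\[
P_1=\begin{pmatrix}1&-t\\ t&1\end{pmatrix},\qquad P_2=\begin{pmatrix}1&t\\ t&-1\end{pmatrix},
\]
both deleting to $Q$. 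Any $W$ with $WP_1=P_2$ must equal
\[
P_2P_1^{-1}=\frac{1}{1+t^2}\begin{pmatrix}1-t^2&2t\\ 2t&t^2-1\end{pmatrix},
\]
which does not lie in $\O_2(\R[t])$ (recall $\U_{2}(\R[t])=\O_2(\R)$ by Remark~\ref{rem:degreesos}); hence $[P_1]\neq[P_2]$ in $\fac{(t^2+1)\1_2}{\R[t]}{2}$, while both map to the single class $[Q]$ in $\fac{M}{\R[t]}{2}$. So removal of the last column is $2$-to-$1$ here, not a bijection, and the corollary as printed appears to be incorrect as a statement about orbit sets. What Lemma~\ref{lem:cauchy_binet} and Remark~\ref{rem:cauchy_binet} actually give is surjectivity together with a description of the fiber over $[Q]$ by the norm-one unit ambiguity of $v$ modulo those units realized by a unitary over $R$ fixing $Q$; any downstream use (in particular the diagram of Corollary~\ref{cor:factorization_snf}) has to be re-examined in this light.
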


Using this observation, we get the corresponding result of Theorem~\ref{thm:factorization_snf} for $(n+1)\times n$-factorizations.
\begin{cor}\label{cor:factorization_snf}
	Let $M\in\Her_n(\K[t])$ be positive semidefinite and non-degenerate. Then the equivalence classes of $(n+1)\times n$-factorizations $M=Q^*Q$ are in one-to-one correspondence to those of the monic Smith normal form of $M$. 
\end{cor}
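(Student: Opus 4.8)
The plan is to combine Corollary~\ref{cor:cauchy_binet} with Theorem~\ref{thm:factorization_snf}, passing through the matrix $M\oplus\langle d\rangle$ where $d=\det(M)$. The key observation is that the monic Smith normal form of $M\oplus\langle d\rangle$ is completely determined by that of $M$: if $M$ has invariant factors $a_1\mid\cdots\mid a_n$ (so $d$ equals $\prod a_i$ up to a unit, and since we work with monic invariant factors we may take $d=\prod a_i$), then adjoining the extra diagonal block $\langle d\rangle$ simply appends $a_n\mid d$ to the chain, as $a_n\mid d$; thus the monic Smith normal form of $M\oplus\langle d\rangle$ is $D\oplus\langle d\rangle$ where $D=\langle a_1,\dots,a_n\rangle$ is the monic Smith normal form of $M$.

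First I would invoke Corollary~\ref{cor:cauchy_binet} to obtain the bijection
\[
	\fac{M\oplus\langle d\rangle}{\K[t]}{n+1}\to\fac M{\K[t]}{n+1}
\]
given by deleting the last column. Next, since $M\oplus\langle d\rangle$ is a non-degenerate positive semidefinite hermitian matrix of size $n+1$, Theorem~\ref{thm:factorization_snf} applies to it and gives a bijection between $\fac{M\oplus\langle d\rangle}{\K[t]}{n+1}$ and $\fac{D'}{\K[t]}{n+1}$, where $D'$ is the monic Smith normal form of $M\oplus\langle d\rangle$. By the observation above, $D'=D\oplus\langle d\rangle$, i.e.\ $D'$ is itself the monic Smith normal form of $M$ together with the appended block $\langle d\rangle$. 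Composing, $(n+1)\times n$-factorizations of $M$ up to equivalence are in bijection with $(n+1)\times(n+1)$-factorizations of $D\oplus\langle d\rangle$, which is exactly what it means to say they correspond to the factorizations of the monic Smith normal form of $M$ (interpreted at the $(n+1)\times n$ level via the same Cauchy-Binet reduction applied to $D$). To state it cleanly I would apply Corollary~\ref{cor:cauchy_binet} once more, now to $D$ in place of $M$: since $\det D=d$, removal of the last column gives a bijection $\fac{D\oplus\langle d\rangle}{\K[t]}{n+1}\to\fac D{\K[t]}{n+1}$. Chaining all four bijections yields a bijection $\fac M{\K[t]}{n+1}\to\fac D{\K[t]}{n+1}$.

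The only point requiring care — and the main (minor) obstacle — is verifying that the monic Smith normal form of $M\oplus\langle d\rangle$ really is $D\oplus\langle d\rangle$ rather than something with the blocks rearranged or with $d$ absorbed differently. This is a purely arithmetic fact about invariant factors over the PID $\K[t]$: the invariant factors of a block-diagonal matrix $D\oplus\langle d\rangle$ with $D=\langle a_1,\dots,a_n\rangle$, $a_1\mid\cdots\mid a_n$, and $d=\prod_i a_i$, are obtained by sorting $a_1,\dots,a_n,d$ into a divisibility chain; since $a_n\mid a_n\cdot(a_1\cdots a_{n-1})=d$, the sorted chain is exactly $a_1\mid\cdots\mid a_n\mid d$, confirming the claim. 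With this in hand the corollary follows formally; no new analytic input beyond Theorem~\ref{thm:factorization_snf} and Corollary~\ref{cor:cauchy_binet} is needed, so the proof is essentially a one-line diagram chase.
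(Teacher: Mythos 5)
Your proof is correct and follows essentially the same route as the paper: pass to $M\oplus\langle d\rangle$, apply Theorem~\ref{thm:factorization_snf} to get the bijection with factorizations of its monic Smith normal form $D\oplus\langle d\rangle$, and use Corollary~\ref{cor:cauchy_binet} on both $M$ and $D$ to descend to the $(n+1)\times n$ level — the paper packages this as a commutative square. Your verification that the monic Smith normal form of $M\oplus\langle d\rangle$ is $D\oplus\langle d\rangle$ (because $a_1\mid\cdots\mid a_n\mid d$ already forms a divisibility chain) fills in a step the paper states without comment.
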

\begin{proof}
	We may assume, that $d:=\det(M)$ is monic. If $D$ is the monic Smith normal form of $M$, then $D\oplus \langle d\rangle$ is the monic Smith normal form of $M\oplus\langle d\rangle$. So we can complete the following diagram to make it commute.
	\begin{center}$
		\begin{tikzcd}
			\fac {M\oplus \langle d\rangle}{\K[t]}{n+1}\ar[r]\ar[d]	& \fac {D\oplus \langle d\rangle}{\K[t]}{n+1}\ar[d]\\
			\fac M{\K[t]}{n+1}\ar[r, dashed]			& \fac D{\K[t]}{n+1}
		\end{tikzcd}$
	\end{center}
	We have a bijection on top by Theorem~\ref{thm:factorization_snf} and vertical bijections by Corollary~\ref{cor:cauchy_binet}.
\end{proof}

Now we can prove the case of particular interest over the field of real numbers.
\begin{cor}\label{cor:real_factorization}
	Let $M\in\Sym_n(\R[t])$ be positive semidefinite with nonzero and square free determinant $\det(M)$. Then there is a bijection between the sets
	\[
		\{\, Q\in\Mat_{(n+1)\times n}(\R[t]) \mid Q^TQ=M\,\}
	\]
	and
	\[
		\{\, g\in\R[t]^2 \mid g^Tg=\det(M)\,\}
	\]
	modulo the left action of the orthogonal groups $\O_n(\R)$ and $\O_2(\R)$, respectively.
\end{cor}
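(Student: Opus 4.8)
The plan is to identify both sides of the asserted bijection with orbit sets of the form $\fac{\cdot}{\R[t]}{\cdot}$ and then assemble the bijection by concatenating results proved above. Write $d:=\det(M)$, which we may assume to be monic. By Remark~\ref{rem:degreesos} every polynomial orthogonal matrix is already constant, so $\U_{n+1}(\R[t])=\O_{n+1}(\R)$ and $\U_2(\R[t])=\O_2(\R)$; reading a column $g\in\R[t]^2$ with $g^Tg=d$ as a $2\times 1$ matrix factoring $\langle d\rangle$, the two sets in the statement modulo these orthogonal group actions are precisely $\fac{M}{\R[t]}{n+1}$ and $\fac{\langle d\rangle}{\R[t]}{2}$. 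So it is enough to produce a bijection $\fac{M}{\R[t]}{n+1}\cong\fac{\langle d\rangle}{\R[t]}{2}$.

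I would obtain this by passing through the monic Smith normal form of $M$. Since $d$ is square free, the divisibility chain of the invariant factors forces this normal form to be $D=\1_{n-1}\oplus\langle d\rangle$ (exactly as in the proof of Corollary~\ref{cor:complex_factorization}), and Corollary~\ref{cor:factorization_snf} gives $\fac{M}{\R[t]}{n+1}\cong\fac{D}{\R[t]}{n+1}$. From there I would add, rearrange, and strip off blocks. Since $\det(D)=d$, Corollary~\ref{cor:cauchy_binet} gives $\fac{D}{\R[t]}{n+1}\cong\fac{D\oplus\langle d\rangle}{\R[t]}{n+1}$; writing $D\oplus\langle d\rangle=\1_{n-1}\oplus\langle d,d\rangle$ and conjugating by a permutation matrix (right multiplication by an invertible matrix descends to a bijection of orbit sets, as in the proof of Theorem~\ref{thm:factorization_snf}) identifies the latter with $\fac{\langle d,d\rangle\oplus\1_{n-1}}{\R[t]}{n+1}$; Lemma~\ref{lem:one_cancellation} with $k=n-1$ strips the identity block and yields $\fac{\langle d,d\rangle}{\R[t]}{2}$; and a final application of Corollary~\ref{cor:cauchy_binet}, this time to $\langle d\rangle$ (whose determinant is again $d$), gives $\fac{\langle d,d\rangle}{\R[t]}{2}\cong\fac{\langle d\rangle}{\R[t]}{2}$. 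Composing this chain of bijections proves the corollary.

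All the real work is packed into the cited results — Corollary~\ref{cor:factorization_snf}, which in turn rests on the matrix Cassels--Pfister theorem and the local Smith normal form (Theorems~\ref{thm:cassels_pfister} and~\ref{thm:local_snf}), together with the Cauchy--Binet bijection — so no genuine obstacle remains at this stage. The only point requiring care is the bookkeeping of the group actions: that the left action on $(n+1)\times n$ matrices is by $\O_{n+1}(\R)$, that the permutation congruences and each map invoked really do descend to bijections of orbit sets, and that the identification of $g$ with a $2\times 1$ factorization of $\langle d\rangle$ intertwines the $\O_2(\R)$-actions. It is also worth unwinding the composite map explicitly for Remark~\ref{rem:real_factorization_explicit}: a factorization $Q^TQ=M$ is first made congruent over $\cO_M$ to a factorization of $D$, then extended by the signed-maximal-minor vector of Lemma~\ref{lem:cauchy_binet} to a square factorization, from whose $2\times 2$ corner the sum-of-two-squares representation of $\det(M)$ is read off.
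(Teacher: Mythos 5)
Your proof is correct and follows essentially the same route as the paper's: pass through the monic Smith normal form $D=\1_{n-1}\oplus\langle d\rangle$ via Corollary~\ref{cor:factorization_snf}, then strip the identity block to land on $\fac{\langle d\rangle}{\R[t]}{2}$, identifying the two orbit sets with the factorization sets in the statement (and, as you correctly note, the left action on the $(n+1)\times n$ side should be by $\O_{n+1}(\R)$). The only variation is that you interpose two extra applications of Corollary~\ref{cor:cauchy_binet} so that Lemma~\ref{lem:one_cancellation} can be invoked exactly as stated for square factorizations, whereas the paper tacitly uses its non-square analogue (whose proof is the same); this introduces a harmless detour, since Corollary~\ref{cor:factorization_snf} already passes through $\fac{D\oplus\langle d\rangle}{\R[t]}{n+1}$ internally.
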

\begin{proof}
       The proof works just as for Corollary~\ref{cor:complex_factorization}. Only we use Corollary~\ref{cor:factorization_snf} instead of Theorem~\ref{thm:factorization_snf} in order to obtain a bijection
       \[
	       \fac M{\R[t]}{n+1}\to\fac {\langle d\rangle}{\R[t]}2.
       \]
\end{proof}

\begin{rem}\label{rem:real_factorization_explicit}
	While it is clear in the complex case, Corollary~\ref{cor:complex_factorization}, that the map in question is given by the determinant, we do not have such an obvious description in the real case.

	Given a factorization $M=Q^TQ$, where $Q\in\Mat_{n+1}(\R[t])$, we follow the construction steps to get a representation of $d:=\det(M)$ as a sum of two squares. By Cauchy-Binet, we get a representation of $d$ as a sum of $n+1$-squares, as in Lemma~\ref{lem:cauchy_binet}. More precisely, there exists $v\in\R[t]^{n+1}$ such that for $Q_1:=(Q|v)\in\Mat_{n+1}(\R[t])$ we have $M\oplus \langle d\rangle=Q_1^TQ_1$. By Theorem~\ref{thm:local_snf} there exists an invertible matrix $T\in\Mat_n(\cO_M)$ such that $T^TMT=\1_{n-1}\oplus d$. For $Q_2:=(QT,v)$ we therefore get $\1_{n-1}\oplus\langle d,d\rangle=Q_2^TQ_2$. By Theorem~\ref{thm:cassels_pfister} there exists $U\in\O_{n+1}(\cO_M)$ such that $UQ_2$ is polynomial and by further applying Lemma~\ref{lem:one_cancellation} we may assume that $UQ_2=\1_{n-1}\oplus A$, for some $A\in\Mat_2(\R[t])$ with $\langle d,d\rangle= A^TA$. Therefore, the last column of $UQ_2$ is given by 
	$Uv=(0,\dots,a,b)^T$, 
	 where $g=(a,b)^T\in\R[t]^2$ is the last column of $A$, which is the desired $2\times 1$-factorization of $d$.

	In short, for the representation $v\in\R[t]^{n+1}$ of $\det(M)$ as a sum of $n+1$-squares coming from the application of Cauchy-Binet to compute $\det(Q^TQ)$, there exists an \emph{appropriate} orthogonal matrix $U\in\O_{n+1}(\cO_M)$, such that $Uv$ is essentially a vector $g$ of length two.

	The first natural question that arises from this observation is whether $v$ can already be compressed over $\R$ to a vector $g$ of length two, that is, can we choose the above matrix $U$ to have entries in $\R$? We want to give an example that this is generally not the case, i.e.~the denominators in $U$ are really necessary:

	Pick any $a,b\in\R[t]$ such that $1,a,b$ are $\R$-linearly independent (e.g. $a=x$, $b=x^2$). We set
	\[
		Q:=\begin{pmatrix}1&0\\0&1\\a&b\end{pmatrix}\in \Mat_{3\times 2}(\R[t])
	\]
	which is a factorization of
	\[
		M:=Q^TQ=\begin{pmatrix}a^2+1 & ab\\ab & b^2+1\end{pmatrix}.
	\]
	The vector $v=(-a,-b,1)^T$ of maximal minors of $Q$ gives a representation $a^2+b^2+1=v^Tv$ of the determinant of $M$ as a sum of three squares. By the above, $v$ is equivalent to a vector of length two, over $\cO_M$, not however over $\R$ by the choice of $a,b$.

	The next question now is, whether any $U\in\O_{n+1}(\cO_M)$ for which $Uv$ is of length two does the job.
	More generally, there is the following open problem.
\end{rem}

\begin{quest*}
	Given any $g,h\in\R[t]^2$ with $g^Tg=h^Th=:d$ such that $g$ and $h$ are $\O_n(\cO_{\langle d\rangle})$-equivalent (after appending $n-2$ zeros), are $g$ and $h$ already $\O_2(\R)$-equivalent?
\end{quest*}

\section{Appendix: Some Prime Avoidance}\label{sec:appendix}

The following is a polynomial version of the prime avoidance lemma. Since we could not find a reference, we include it here.
\begin{lem}\label{lem:prime_avoid_polynomial}
	Let $R$ be a commutative ring and $\p_1,\dots,\p_n\in\Spec R$ such that all quotients $R/\p_i$ are infinite. Moreover let $f\in R[x_1,\dots,x_s]$ be a polynomial that does not lie in $\bigcup_{i=1}^n\p_iR[x_1,\dots,x_s]$. Then there exists $a\in R^s$ such that $f(a)\notin\bigcup_{i=1}^n\p_i$.
\end{lem}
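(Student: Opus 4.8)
The plan is to prove this by induction on the number $s$ of variables, reducing the polynomial statement to the classical (elementwise) prime avoidance lemma. Write $S := R[x_1,\dots,x_s]$ and $\p_i S$ for the extended primes. The hypothesis is $f \notin \bigcup_i \p_i S$; after discarding redundant primes we may assume that no $\p_i$ is contained in another, and in particular that for each $i$ there is a coefficient of $f$ (as a polynomial in $x_1,\dots,x_s$) that does not lie in $\p_i$.

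For the base case $s=1$, write $f = \sum_{j=0}^d c_j x^j$ with $c_j \in R$. For each $i$, since $f \notin \p_i S$, some coefficient $c_{j}$ avoids $\p_i$; equivalently the reduction $\bar f_i \in (R/\p_i)[x]$ is a nonzero polynomial over the domain $R/\p_i$. A nonzero polynomial over an infinite integral domain has only finitely many roots, so there are only finitely many residues in $R/\p_i$ at which $\bar f_i$ vanishes. I would then want a single $a \in R$ whose image in each $R/\p_i$ is \emph{not} among these finitely many bad residues. This is itself an instance of prime avoidance applied inside a suitable quotient, or can be arranged directly: the set of $a \in R$ with $f(a) \in \p_i$ maps, under $R \to R/\p_i$, into a finite set, so it is a union of finitely many cosets of $\p_i$; a standard counting/avoidance argument (using that each $R/\p_i$ is infinite, so $R$ is not covered by finitely many such cosets together with the finitely many $\p_i$ themselves) produces the desired $a$. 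Concretely, one can invoke the classical prime avoidance lemma for the finitely many ideals among $\{\p_i\} \cup \{\text{preimages of the bad cosets}\}$, noting none of these finitely many ideals can contain $R$.

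For the inductive step, regard $f \in R[x_1,\dots,x_{s-1}][x_s]$ as a univariate polynomial over $R' := R[x_1,\dots,x_{s-1}]$, with coefficients $g_0,\dots,g_d \in R'$. For each $i$, since $f \notin \p_i S$, at least one $g_j \notin \p_i R'$. Now apply the induction hypothesis not to $f$ but to the single polynomial $g := \prod_{i} g_{j(i)}$ — or, more carefully, handle each $i$ separately and then combine: by induction there is a point $b \in R^{s-1}$ such that for every $i$, the chosen coefficient $g_{j(i)}$ satisfies $g_{j(i)}(b) \notin \p_i$. (To get a single $b$ working for all $i$ at once, apply the induction hypothesis to a polynomial whose nonvanishing mod $\p_i$ forces nonvanishing of the relevant $g_{j(i)}$ mod $\p_i$; taking the product $\prod_i g_{j(i)}$ works since $R/\p_i$ is a domain.) Substituting $x_1 = b_1,\dots,x_{s-1}=b_{s-1}$ yields a univariate polynomial $f(b, x_s) \in R[x_s]$ which, by construction, does not lie in $\bigcup_i \p_i R[x_s]$. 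Apply the base case to finish, obtaining $a_s \in R$ with $f(b,a_s) \notin \bigcup_i \p_i$, and set $a = (b, a_s)$.

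The main obstacle I anticipate is the bookkeeping in the inductive step to ensure a \emph{single} evaluation point $b$ simultaneously keeps some coefficient outside each of the $\p_i$; this is where one must be careful to feed the induction hypothesis the right auxiliary polynomial (the product over $i$ of the surviving coefficients), exploiting that each $R/\p_i$ is an integral domain so that a product of elements nonzero mod $\p_i$ is again nonzero mod $\p_i$. The only other point needing care is the base case, where the classical prime avoidance lemma must be applied to an enlarged finite family of ideals coming from the finitely many roots of $\bar f_i$ over each infinite domain $R/\p_i$; the infiniteness hypothesis on the $R/\p_i$ is used exactly here, both to bound the roots and to ensure the relevant ideals are proper.
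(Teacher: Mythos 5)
Your strategy (induction on $s$, reducing to the univariate case) is genuinely different from the paper's, which handles all $s$ at once: the paper localizes at $S := R \setminus \bigcup_i \p_i$, uses the Chinese Remainder Theorem to identify $S^{-1}R$ with a product of infinite fields $K_i$, picks $b_i \in K_i^s$ with $f_i(b_i) \neq 0$, and then returns to $R$ by the auxiliary polynomial $g(t) := f(tb)$ together with a scaling factor $c^d$ for a common denominator $c$ and a suitable exponent $d$. As written, however, your proof has two genuine gaps.

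\emph{Inductive step.} Applying the induction hypothesis to $g := \prod_i g_{j(i)}$ requires $g \notin \bigcup_i \p_i R'$, and this is not guaranteed: your choice of $j(i)$ only ensures $g_{j(i)} \notin \p_i R'$, but gives no control over $g_{j(i')}$ modulo $\p_i$ for $i' \neq i$, and a single factor in $\p_i R'$ already places the whole product there. For a concrete failure, take $R = \Q[s,t]$, $\p_1=(s)$, $\p_2=(t)$, and $f = s + t\,x_s$: one finds $g_{j(1)} = t$ and $g_{j(2)} = s$, so $g = st \in \p_1 R' \cap \p_2 R'$. The repair is to use \emph{classical} prime avoidance in $R'$ on the ideal $I := (g_0,\dots,g_d)$ generated by the coefficients: $I \not\subseteq \p_i R'$ for every $i$, hence $I \not\subseteq \bigcup_i \p_i R'$, so there is a single $h = \sum_j a_j g_j \in I$ with $h \notin \bigcup_i \p_i R'$; any $b$ with $h(b) \notin \bigcup_i \p_i$ then forces, for each $i$, some $g_j(b) \notin \p_i$.

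\emph{Base case.} Your reduction to ``$R$ is not covered by the finitely many bad cosets of the $\p_i$'' is fine, but this statement is \emph{not} an instance of the classical prime avoidance lemma: a coset $r + \p_i$ with $r \notin \p_i$ does not contain $0$, hence is not an ideal, so the lemma for ideals says nothing about it. What you actually need here is B.~H.~Neumann's theorem that a group cannot be covered by finitely many cosets of subgroups all of infinite index (the hypothesis that each $R/\p_i$ is infinite is exactly what makes this apply) --- or, alternatively, the paper's own argument specialized to $s=1$, using localization, CRT, and the $c^d$-trick to pull the solution back from $S^{-1}R$ to $R$. As it stands, the justification you give for the base case does not hold up.
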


\begin{proof}
	We may assume that the $\p_i$ are pairwise incomparable with respect to inclusion. Passing over to the quotient modulo $I:=\bigcap_i \p_i$ we may further assume that $I=0$. We define the multiplicative set $S:=R\setminus \bigcup_i\p_i$, the localization $Q:=S^{-1}R$, and the prime ideals $\q_i:=\p_iQ$. Any element that is not contained in $\bigcup_i\q_i$ is invertible. Using the Prime Avoidance Lemma \citep[Lemma~3.3]{Eisenbud95} we thus conclude, that the $\q_i$ are maximal ideals. In particular, they are pairwise coprime. By the Chinese Remainder Theorem, $Q$ is isomorphic to the product of the fields $K_i:=Q/\q_i$. Via this isomorphism we identify elements $q\in Q$ with tuples $(q_1,\dots,q_r)\in \prod_{i=1}^rK_i$ and refer to $q_1,\dots,q_r$ as the components of $q$. Likewise we identify $f\in Q[\underline{x}]$ with a tuple of polynomials
	\[
		f=(f_1,\dots,f_r)\in\prod_{i=1}^rK_i[\underline{x}]=Q[\underline{x}]
	\]
	and by assumption none of its components are zero. Since the $K_i$ are infinite there exists $b_i\in K_i^s$ with $f_i(b_i)\neq0$ for every $i$. Again we identify the tuple $(b_1,\dots,b_r)\in \prod_{i=1}^rK_i^s$ with an element $b\in Q^s$. Denote by $c\in S$ a common denominator of all components of $b$, i.e.~$c b\in R^s$. 
	None of the components $g_i:=f_i(tb_i)$ of the univariate polynomial $g:=f(tb)\in Q[t]$ is the zero polynomial, since $g_i(1)\neq 0$. By possibly replacing $c$ by a suitable power of $c$, we may assume that every component $c_i$ of $c$ 
	is either $1$ or not a root of unity. Then for each $i$ the set $\{\, c_i^d \mid d\in\N\,\}$ is either infinite or $\{1\}$. We can therefore choose $0<d\in\N$ such that for all $i$ we have $g_i(c_i^d)\neq0$. Now $a:=c^db$ lies in $R^s$ and has the property that $f(a)=g(c^d)$ does not lie in any of the $\p_i$, as desired.
\end{proof}

\begin{lem}\label{lem:prime_avoid_quadratic}
	Let $R$ be a commutative ring containing $\Q$ and let $\p_1,\dots,\p_r\in\Spec R$. Suppose we have $A\in\Sym_n(R)$  such that $A\notin \bigcup_i\p_i^{n\times n}$. Then there exists $v\in R^n$ with
	\[
		q_A(v,v)=v^TAv\notin\bigcup_i\p_i.
	\]
\end{lem}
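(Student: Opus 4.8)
The plan is to deduce Lemma~\ref{lem:prime_avoid_quadratic} from the polynomial prime avoidance Lemma~\ref{lem:prime_avoid_polynomial}. The bridge is the quadratic form $f := q_A(x,x) = \sum_{i,j} A_{ij} x_i x_j \in R[x_1,\dots,x_n]$, viewed as a polynomial in $n$ variables with coefficients in $R$. The hypothesis $A \notin \bigcup_i \p_i^{n\times n}$ says exactly that for each $i$ there is some entry $A_{k\ell} \notin \p_i$, and since $\Q \subseteq R$ the quotients $R/\p_i$ have characteristic zero, hence are infinite, so the hypotheses of Lemma~\ref{lem:prime_avoid_polynomial} on the primes are met. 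If I can show $f \notin \bigcup_i \p_i R[x_1,\dots,x_n]$, then Lemma~\ref{lem:prime_avoid_polynomial} produces $v \in R^n$ with $f(v) = q_A(v,v) \notin \bigcup_i \p_i$, which is the conclusion.

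So the only real content is the reduction $A \notin \p_i^{n\times n} \Rightarrow f \notin \p_i R[\underline x]$, i.e.\ that the quadratic form associated to $A$ is not identically zero modulo $\p_i$. Reducing modulo $\p_i$, this becomes: over the integral domain $\bar R := R/\p_i$ (with $\tfrac12 \in \bar R$), if $\bar A \neq 0$ then the quadratic polynomial $\sum_{k,\ell} \bar A_{k\ell} x_k x_\ell$ is a nonzero polynomial. First I would note that the coefficient of the monomial $x_k^2$ in $f$ is $A_{kk}$, and the coefficient of $x_k x_\ell$ for $k \neq \ell$ is $A_{k\ell} + A_{\ell k} = 2 A_{k\ell}$ by symmetry of $A$. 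So modulo $\p_i$: if some diagonal entry $\bar A_{kk} \neq 0$, the coefficient of $x_k^2$ is nonzero and we are done; otherwise $\bar A$ is not diagonal (it is nonzero with zero diagonal), so some off-diagonal $\bar A_{k\ell} \neq 0$ with $k \neq \ell$, and since $2$ is a unit in $\bar R$ the coefficient $2\bar A_{k\ell} \neq 0$. Either way $f$ has a nonzero coefficient mod $\p_i$, hence $f \notin \p_i R[\underline x]$.

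I expect no genuine obstacle here; the statement is essentially a packaging of Lemma~\ref{lem:prime_avoid_polynomial}, and the one subtlety — that a nonzero symmetric matrix with characteristic $\neq 2$ still gives a nonzero quadratic form, which would fail in characteristic $2$ — is exactly why the hypothesis $\Q \subseteq R$ (equivalently, $2 \in R^\times$) appears. The remaining steps (infiniteness of $R/\p_i$, checking the monomial coefficients) are routine and already sketched above; nothing else needs care.
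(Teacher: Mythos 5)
Your proof is correct and follows the same route as the paper: reduce to Lemma~\ref{lem:prime_avoid_polynomial} by showing that the polynomial $x^T A x$ is not in $\bigcup_i \p_i R[x_1,\dots,x_n]$, using that $2$ is invertible. The paper just says ``use the polarization identity''; your explicit check of the monomial coefficients (diagonal giving $A_{kk}$, off-diagonal giving $2A_{k\ell}$) is the same observation unpacked.
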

\begin{proof}
	Since $2\in A^{\times}$ and $A\notin \bigcup_i\p_i^{n\times n}$ we conclude that the polynomial $x^TAx\in R[x_1,\dots,x_n]$ does not lie in $\bigcup_i \p_i R[x_1,\dots,x_n]$ (use the polarization identity for quadratic forms). The claim now follows from Lemma~\ref{lem:prime_avoid_polynomial}.
\end{proof}

For the sake of completeness, we also provide the following generalization for hermitian forms.
\begin{lem}\label{lem:prime_avoid_hermitian}
	Let $R\supset \Q$ be a commutative ring with involution and let $\p_1,\dots,\p_r\in \Spec R$. Suppose we have $M\in\Her_n(R)$  such that $M\notin \bigcup_i\p_i^{n\times n}$. Then there exists $v\in R^n$ with
	\[
		h_M(v,v)=v^*Mv\notin\bigcup_i\p_i.
	\]
\end{lem}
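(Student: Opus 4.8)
The plan is to imitate the proof of Lemma~\ref{lem:prime_avoid_quadratic} almost verbatim, reducing the hermitian statement to the polynomial prime avoidance Lemma~\ref{lem:prime_avoid_polynomial}. The only subtlety is that the sesquilinear form $h_M(v,w) = v^*Mw$ is not a polynomial expression in the entries of $v$ over $R$ (because of the involution), so I first need to absorb the involution into the coefficient ring. Concretely, I would introduce polynomial variables $x_1,\dots,x_n$ and consider the element
\[
	f := \sum_{j,k} x_j M_{jk}^* x_k \in R[x_1,\dots,x_n],
\]
which is an honest polynomial with coefficients in $R$, and which satisfies $f(v) = \overline{v}^{\,T} M^* v$ for any $v \in R^n$ with the convention that substitution is entrywise (no conjugation). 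Since $M$ is hermitian, replacing $v$ by $v$ and noting $M^* = M$, one checks that for suitable substitutions this recovers $v^*Mv$; more precisely I would work with the hermitian form evaluated on vectors, i.e. set $g := \sum_{j,k} x_j M_{jk} x_k$ and observe $h_M(v,v) = \sum_{j,k}\overline{v_j}\,M_{jk}\,v_k$, which equals $g$ evaluated at the vector whose $j$-th entry contributes $\overline{v_j}$ in the first slot — so the clean route is to pass to a single polynomial and use that evaluation is a ring homomorphism.

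First I would verify the hypothesis of Lemma~\ref{lem:prime_avoid_polynomial}: that the polynomial (call it $g = \sum_{j,k} x_j M_{jk} x_k$) does not lie in $\bigcup_i \p_i R[x_1,\dots,x_n]$. This follows exactly as in the quadratic case: a polynomial lies in $\p_i R[\underline x]$ if and only if all its coefficients lie in $\p_i$; the coefficient of $x_j x_k$ (for $j \neq k$) is $M_{jk} + M_{kj}$ and of $x_j^2$ is $M_{jj}$, and since $2 \in R^\times$ (as $R \supset \Q$) and $M^* = M$, having all these in $\p_i$ forces all entries $M_{jk} \in \p_i$ by the polarization identity, contradicting $M \notin \bigcup_i \p_i^{n\times n}$. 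Note the quotients $R/\p_i$ are infinite because $R \supset \Q$. Then Lemma~\ref{lem:prime_avoid_polynomial} produces $a \in R^n$ with $g(a) \notin \bigcup_i \p_i$.

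The remaining point is to convert the vector $a \in R^n$ with $\sum_{j,k} a_j M_{jk} a_k \notin \bigcup_i \p_i$ into a vector $v$ with $v^* M v \notin \bigcup_i \p_i$. Here I would simply note that it suffices to produce $v$ with $\overline{v}^{\,T} M v$ avoiding the primes, and then observe that the expression $\sum_{j,k} a_j M_{jk} a_k$ is already of this shape if we are allowed to choose $v$ with $\overline{v_j} = a_j$ — but in general $a_j$ need not be a conjugate. The cleanest fix: apply Lemma~\ref{lem:prime_avoid_polynomial} not to $g$ but directly to the polynomial $f = \sum_{j,k} x_j M^*_{jk} y_k$ in $2n$ variables, or more simply, observe that since $M$ is hermitian the form $v \mapsto v^*Mv$ takes values in the subring of $*$-invariant elements and the substitution trick $x_j \mapsto a_j$ with the understanding that we then need a $v$ with $v_j = a_j$ gives $v^*Mv = \sum \overline{a_j} M_{jk} a_k$; this is in general \emph{not} $g(a)$. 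The honest resolution, which I expect to be the one intended, is: replace the involution-free polynomial by treating $x_j$ and $x_j^*$ as independent — but the slickest path is to note that Lemma~\ref{lem:prime_avoid_polynomial} applied to $g(x) = \sum x_j M_{jk} x_k$ gives $a$, and then take $v$ to be \emph{any} vector with $v^*Mv = g(a)$; such $v$ exists whenever $M$ has, say, a unit entry near where we need it — so I would instead directly run the argument with $v_j$ as the variables, forming the polynomial $\tilde g \in R[x_1,\dots,x_n]$ by $\tilde g = \sum_{j,k} x_j M_{jk} x_k$ and simply noting that $h_M(v,v)$ for a vector $v$ \emph{with entries in the $*$-invariant subring} equals $\tilde g(v)$; since $\Q$ (hence lots of $*$-invariant elements) sits inside $R$, Lemma~\ref{lem:prime_avoid_polynomial} with the coefficient search restricted appropriately — or, cleanest of all: \textbf{the main obstacle} is precisely this bookkeeping of the involution, and I expect the paper handles it by the one-line remark that $x^*Mx$, expanded, is a polynomial in the entries of $x$ and their conjugates with coefficients the entries of $M$, that $M \notin \bigcup \p_i^{n \times n}$ implies this polynomial (now in $2n$ variables $x_j, x_j^*$) is not in $\bigcup \p_i R[\underline x, \underline{x}^*]$ by polarization using $2 \in R^\times$, and that Lemma~\ref{lem:prime_avoid_polynomial} in $2n$ variables then yields the conjugate pair $(a, \overline a)$ after noting that a solution $(a,b)$ can be symmetrized — or one just applies the lemma once to get $a$ and then checks $\overline{a}^{\,T} M a$ by a separate genericity argument. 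In the write-up I would follow the quadratic case's phrasing as closely as possible, citing the polarization identity and Lemma~\ref{lem:prime_avoid_polynomial}, and handle the conjugation by working in $R[x_1,\dots,x_n, x_1^*,\dots,x_n^*]$ and substituting a vector together with its conjugate.
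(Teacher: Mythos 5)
You correctly locate the obstacle --- $v\mapsto v^*Mv$ is not a polynomial in the entries of $v$ over $R$ because of the conjugation --- but none of your proposed workarounds closes the gap. The version you lean towards, introducing $2n$ independent variables $x_j,x_j^*$ and applying Lemma~\ref{lem:prime_avoid_polynomial} over $R$ to $\sum_{j,k}x_j^*M_{jk}x_k$, produces a point $(a,b)\in R^{2n}$ with $\sum b_jM_{jk}a_k$ outside $\bigcup_i\p_i$; but the lemma imposes no constraint at all forcing $b=\overline a$, and there is no visible ``symmetrization'' of $(a,b)$ into a conjugate pair that preserves prime avoidance. Restricting to $*$-invariant $v$ doesn't work either: for such $v$ one gets $v^*Mv=v^TAv$ where $A$ is only the symmetric part of $M$, and $A$ may already lie in $\p_i^{n\times n}$ even though $M$ doesn't (the obstruction may sit entirely in the skew-hermitian part). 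Your remaining suggestions (``take $v$ to be any vector with $v^*Mv=g(a)$'', ``$M$ has a unit entry near where we need it'') are not substantiated and will fail in general.

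The idea you are missing is to pass to the subring $S\subset R$ of $*$-invariant elements and make the involution disappear from the polynomial. In the clean case that $R$ contains a skew-hermitian unit $u$ (so $u^*=-u$, $u\in R^\times$; then $R=S+uS$ since $2\in R^\times$), write $M=A+uB$ with $A\in\Sym_n(S)$ and $B$ skew-symmetric, and parametrize $v=x+uy$ with $x,y\in S^n$. Expanding, $h_M(x+uy,x+uy)=x^TAx-u^2\,y^TAy+2u^2\,x^TBy$, and since $u^2=-u^*u\in S$, this is an honest polynomial over the commutative ring $S$ in the $2n$ variables $(x,y)$, taking values in $S$. One then applies Lemma~\ref{lem:prime_avoid_polynomial} over $S$ with the primes $\p_i\cap S$. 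The required nonvanishing hypothesis is verified prime by prime: if $A\notin\p_i^{n\times n}$, polarization over $S$ (as in the quadratic case) gives $z\in S^n$ with $z^TAz\notin\p_i$; if $A\in\p_i^{n\times n}$ then $B\notin\p_i^{n\times n}$ and one takes $z=e_j+ue_k$ where $B_{jk}\notin\p_i$. When no global skew-hermitian unit exists, the paper uses a slightly more elaborate parametrization $w_0+\sum_iu_iw_i$ with prime-specific skew-hermitian $u_i\in R$, but the principle is identical: parametrize $R^n$ by $S$-coordinates so that $h_M$ becomes a genuine polynomial over $S$ and the prime avoidance lemma applies without any hidden conjugation constraint.
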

\begin{proof}
	Denote by $S$ the ring of elements which are fixed by the involution on $R$. In order to make the proof more transparent, we first consider the case that there exists a skew-hermitian unit, i.e.~$u\in R^{\times}$ with $u^*=-u$. In this case $R=S+uS$, since $2\in R^{\times}$. We write $M=A+uB$, with $A,B\in \Mat_n(S)$, $A$ symmetric and $B$ skew-symmetric, thinking of $A$ as the real part and $B$ as the imaginary part of $M$.  

	Now we fix $i\in \{1,\dots,r\}$ for a moment and consider $\p=\p_i$. If $A\notin \p^{n\times n}$, we find $z\in S^n$ such that $h_M(z,z)=z^*Mz=z^TAz\notin\p$ using the polarization identity. Otherwise, $B\notin\p^{n\times n}$ and we take $z=e_i+ue_j$ if the $ij$-th entry of $B$ does not lie in $\p$. Then again $h_M(z,z)\notin \p$. We repeat this for every $i$ and apply Lemma~\ref{lem:prime_avoid_polynomial} to the polynomial map $S^n\times S^n\to S, (x,y)\to h_M(x+uy,x+uy)$ to get $v$ such that $h_M(v,v)\notin \p$.

	In case there exists no such skew-hermitian unit $u\in R^{\times}$, we still can write $M=A+B$, where $A\in \Sym_n(S)$ and $B\in\Mat_n(R)$ is skew-symmetric with skew-hermitian entries. Then for every $i\in\{1,\dots,r\}$, we choose skew-hermitian $u_i\in R$ such that
	\begin{itemize}
		\item $u_i=0$, if $A\notin \p_i^{n\times n}$,
		\item any $u_i\notin \p_i$, if $A\in \p_i^{n\times n}$ and hence $B\notin \p_i^{n\times n}$.
	\end{itemize}
	Similarly as above, it is easy to check that for every $i$ there exist $z_i\in R^n$ of the form $z_i=x_i+u_i y_i$ ($x_i,y_i\in S^n$) such that $h_M(z_i,z_i)\notin \p_i$. Now we consider the map
	\begin{align*}
		\ph\colon \left(S^n\right)^{r+1}&\to S^n\\
		w=\left(w_0,\dots,w_r\right)&\mapsto w_0+\sum u_iw_i.
	\end{align*}
	Again, applying Lemma~\ref{lem:prime_avoid_polynomial} to the polynomial map $w\mapsto h_M(\ph(w),\ph(w))$, proves the existence of $v=\ph(w)$ such that $h_M(v,v)\notin \bigcup \p_i$.
\end{proof}

\bibliographystyle{alpha}
\bibliography{lit}

\end{document}